\def\R{\mathbb{R}}
\def\eps{\varepsilon}
\def\oG{\mathfrak{G}}
\def\un{\mathbf{1}}
\DeclareMathOperator{\interieur}{int}
\DeclareMathOperator{\epi}{epi}
\newtheorem{theo}{Theorem}[section]
\newtheorem{lem}[theo]{Lemma}
\newtheorem{pro}[theo]{Proposition}
\theoremstyle{definition}
\newtheorem{defi}[theo]{Definition}
\theoremstyle{remark}
\newtheorem{rem}[theo]{Remark}
\numberwithin{equation}{section}
\begin{document}

\title{\bf Quasi-convex Hamilton-Jacobi equations 
posed on junctions: the multi-dimensional case}

\author{C. Imbert\footnote{CNRS \& D\'epartment de Math\'ematiques et  
    Applications, \'Ecole Normale Sup\'erieure (Paris), 45 rue d'Ulm,
    75005 Paris, France}~ and R. Monneau\footnote{70, rue du Javelot,
    75013 Paris, France} }

\maketitle


\begin{abstract}
  A \emph{multi-dimensional junction} is obtained by identifying the
  boundaries of a finite number of copies of an Euclidian
  half-space. The main contribution of this article is the
  construction of a \emph{multidimensional vertex test function}
  $G(x,y)$. First, such a function has to be sufficiently regular to
  be used as a test function in the viscosity solution theory for
  quasi-convex Hamilton-Jacobi equations posed on a multi-dimensional
  junction. Second, its gradients have to satisfy appropriate
  compatibility conditions in order to replace the usual quadratic
  penalization function $|x-y|^2$ in the proof of strong uniqueness
  (comparison principle) by the celebrated doubling variable
  technique. This result extends a construction the authors previously
  achieved in the network setting. In the multi-dimensional setting,
  the construction is less explicit and more delicate.
\end{abstract}

\paragraph{Mathematical Subject Classification:} 35F21, 49L25, 35B51.

\paragraph{Keywords:} Hamilton-Jacobi equations, multi-dimensional
junctions, multi-dimensional vertex test function.
\setcounter{tocdepth}{1}
\tableofcontents

\section{Introduction}

\begin{figure}
\begin{center}
\includegraphics[height=6cm]{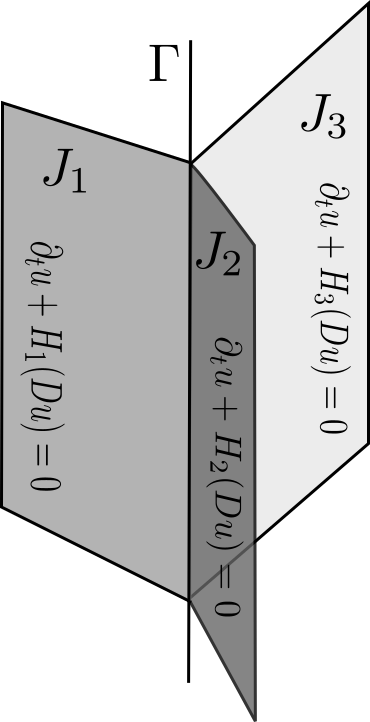}
\caption{A Hamilton-Jacobi equation posed on a multi-dimensional
  junction. Here there are 3 branches (or sheets -- $N=3$) and the
  tangential dimension is $1$ ($d=1$). We did not illustrate the
  junction condition on the junction hyperplane $\Gamma$ (which is a
  line in this example).}
\label{fig:junction}
\end{center}
\end{figure}
A \emph{multi-dimensional junction} is made of a finite number of
copies of an Euclidian half-space glued through their boundaries (see
Figure~\ref{fig:junction}).
\begin{equation}\label{eq::J}
J=\bigcup_{i=1,...,N} J_i \quad \quad
\mbox{with}\quad \left\{\begin{array}{l} J_i = \{ X= (x',x_i): x' \in \R^d, x_i
  \ge 0 \} \simeq \R^{d} \times [0,+\infty) \smallskip\\ 
J_i\cap J_j = \Gamma \simeq \R^d \times \left\{0\right\} \quad \mbox{for}\quad i\not=j
\end{array}\right.
\end{equation}
(with $N \ge 1$ and $d \ge 0$). 
It was previously considered in \cite{fw04} and referred to as an open book. 
It was also considered in \cite{os}. 

The common boundary $\Gamma$ of the half-spaces $J_i$ is referred to
as the {\it junction hyperplane}. For points $X,Y\in J$, the distance
$d(X,Y)$ is defined as follows
\[d^2(X,Y) = \begin{cases} |x'-y'|^2+ (x+y)^2 & \text{ if } X \in J_i, Y \in J_j,
  i\neq j \\ |x'-y'|^2 + |x-y|^2 & \text{ if } X,Y \in J_i.\end{cases}\]

For a sufficiently regular real-valued function $u$ defined on $J$,
$\partial_i u(X)$ denotes the (spatial) derivative of $u$ with respect
to $x_i$ at $X=(x',x_i) \in J_i$ and $D'u (X)$ denotes the (spatial)
gradient of $u$ with respect to $x'$. The ``gradient'' of $u$ is
defined as follows,
\begin{equation}\label{eq:grad}
Du(X):=\begin{cases}
(D'u (X),\partial_i u(X)) & \quad \mbox{if} \quad X\in 
J_i^*:= J_i\setminus \Gamma,\\
(D'u (x',0),\partial_1 u(x',0),...,\partial_N u(x',0)) 
& \quad \mbox{if} \quad X =(x',0)\in \Gamma.
\end{cases}
\end{equation}
With such a notation in hand, we consider a Hamilton-Jacobi equation
posed on the multi-dimensional junction $J$ of the form
\begin{equation}\label{eq:hj-FA}
\left\{\begin{array}{lll}
u_t + H_i(Du)= 0  & t>0, X \in J_i\setminus \Gamma,\\
u_t + F_A(Du)=0   &t>0,  X \in \Gamma
\end{array}\right.
\end{equation}
subject to the initial condition
\begin{equation}\label{eq:ic-0}
u(0,X)=u_0(X) \quad \mbox{for}\quad X\in J.
\end{equation}
The Hamiltonians satisfy the following assumptions.
\begin{equation}\label{assum:H}
\left\{\begin{array}{ll}
\text{\bf (Continuity)} &  H_i \in C(\R^{d+1}) \medskip\\
\text{\bf (Quasi-convexity)} & \forall \lambda, \{ p \in \R^{d+1}: H_i(p) \le \lambda
\} \text{ is convex} \medskip\\
\text{\bf (Coercivity)} & \lim_{|p|\to +\infty} H_i(p)=+\infty.
\end{array}\right.
\end{equation}
The real number $\pi_i^0(p')$ is the minimal $\hat p_i \in \R$ such
that $p_i \mapsto H_i (p',p_i)$ reaches its minimum at
$\hat{p}_i$. The function $H_i^-$ is defined by
\[H_i^-(p',p_i)=\begin{cases}
H_i(p',p_i) &\quad \mbox{if}\quad p_i \le \pi_i^0(p'),\\
H_i(p',\pi_i^0(p')) &\quad \mbox{if}\quad p > \pi_i^0(p')
\end{cases}\] 
(see Figure~\ref{fig:2} in Section~\ref{sec:short}).

The \emph{junction function} $F_A$ appearing in \eqref{eq:hj-FA} is
constructed from the Hamiltonians $H_i$ and a function $A$ defined on
the tangent space of $\Gamma$, referred to as a \emph{flux
  limiter}. After identifying the tangent space of $\Gamma$ with
$\R^d$, flux limiters are functions $A\colon\R^d \to \R$ satisfying
the following assumption.
\begin{equation}\label{assum:Aqc}
\left\{\begin{array}{ll}
\text{\bf (Continuity)} &  A \in C(\R^{d}) \medskip\\
\text{\bf (Quasi-convexity)} & \forall \lambda, \{ p \in \R^{d}: A(p) \le \lambda
\} \text{ is convex}.
\end{array} \right.
\end{equation}
An example of such a flux limiter is given by
\begin{equation}\label{eq:A0}
A_0(p')=\max_{i=1,...,N} A_i (p') \quad \text{ with } \quad A_i (p') = \min_{p_i \in \R} H_i(p',p_i).
\end{equation}
The function $F_A$ is defined as
\begin{equation}\label{eq:FA}
F_A(p',p_1,\dots,p_N)=\max \left(A(p'),  \max_{i=1,...,N} H_i^-(p',p_i)\right)
\end{equation}
(recall the junction condition in \eqref{eq:hj-FA} and the definition
of $Du(x)$ in \eqref{eq:grad} for $x \in \Gamma$).

\paragraph{Main result.} Our main result is the existence of the
\emph{multi-dimensional vertex test function}, a ``sufficiently''
regular function defined on $J^2$ whose gradients satisfy appropriate
compatibility conditions. In the following statement, $C(J)$ and
$C(J^2)$ denote the classes of continuous functions in $J$ and $J^2$
respectively. The class of functions $C^1(J)$ is made of functions of
$C(J)$ such that the restrictions to $J_i$ are $C^1$ up to $\Gamma$ --
see \eqref{eq:C1} below.  
\begin{theo}[The vertex test function]\label{th::G}
  Let $A$ satisfy \eqref{assum:Aqc} with $A\ge A_0$ and let
  $\gamma \in (0,1]$ be a small error parameter. Assume the
  Hamiltonians satisfy \eqref{assum:H}.  Then there exists a function
  $G:J^2\to \R$ enjoying the following properties.
\begin{enumerate}[i)] 
\item \label{i*} \emph{(Regularity)}
\[G\in C(J^2)\quad \text{and}\quad \left\{\begin{array}{l}
G(X,\cdot)\in C^1(J) \quad \text{for all}\quad X\in J,\\
G(\cdot,Y)\in C^1(J) \quad \text{for all}\quad Y\in J.
\end{array}\right.\]
\item \label{ii*} \emph{(Bound from below)} $G\ge 0=G(0,0)$.
\item \label{iii*} \emph{(Compatibility condition on the diagonal)}
For all $X\in J$,
\begin{equation}\label{eq::85}
0\le G(X,X) -G(0,0)  \le \gamma.
\end{equation}
\item \label{iv*} \emph{(Superlinearity)} There exists $g:[0,+\infty)\to \R$
  nondecreasing and such that for $(X,Y)\in J^2$
\begin{equation}\label{eq::20}
g(d(X,Y))\le G(X,Y) \quad \text{and}\quad \lim_{a\to +\infty}
\frac{g(a)}{a} = +\infty.
\end{equation}
\item \label{v*} \emph{(Gradient bounds)} For all $K>0$, there exists  $C_K>0$
  such that for all $(X,Y)\in J^2$,
\begin{equation}\label{eq::19}
d(X,Y)\le K \quad \Longrightarrow \quad |D_X G(X,Y)|+|D_Y G(X,Y)|\le C_K.
\end{equation}
\item \label{vi*}\emph{(Compatibility condition on the gradients)} There exists a
  family of modulus of continuity $\{ \omega_R \}_{R>0}$ such that for
  all $X,Y\in J$ and $K>0$ with $d(X,Y)\le K$,
\begin{equation}\label{eq:shorthandbis}
\left. \begin{array}{ll}
H_j(-D_YG(X,Y))-H_i(D_XG(X,Y)) \le \omega_{C_K}(\gamma C_K) & \text{ if } Y \in J_j^*, X \in J_i^* \\
H_j(-D_YG(X,Y))-F_A(D_XG(X,Y)) \le \omega_{C_K}(\gamma C_K) & \text{ if } Y \in J_j^*, X \in \Gamma\\
F_A(-D_YG(X,Y))-H_i(D_XG(X,Y)) \le \omega_{C_K}(\gamma C_K) &\text{ if } Y \in \Gamma, X \in J_i^*\\
F_A(-D_YG(X,Y))-F_A(D_XG(X,Y)) \le \omega_{C_K}(\gamma C_K) &\text{ if } Y \in \Gamma, X \in \Gamma
\end{array} \right\}
\end{equation}
with $C_K$ given in \eqref{eq::19}.
\end{enumerate}
\end{theo}
\begin{rem}
  We recall that for
  $X\in \Gamma$ (resp. $Y \in \Gamma$), the gradient $D_X G(X,Y)$
  (resp. $D_YG(X,Y)$) is defined in \eqref{eq:grad}.
\end{rem}
Theorem~\ref{th::G} implies strong uniqueness for
\eqref{eq:hj-FA}-\eqref{eq:ic-0}. As a matter of fact, it even implies
strong uniqueness for a large class of Hamilton-Jacobi equations posed
on a generalized junction, see Remark~\ref{rem:gen} for further
details. In order to state the strong uniqueness result for
\eqref{eq:hj-FA}-\eqref{eq:ic-0}, we first make precise in which weak
sense the solutions satisfy the equation and the junction
condition. The appropriate notion is the one of \emph{flux-limited
  solutions} \cite{im}: these solutions are viscosity solutions \`a la
Crandall-Evans-Lions satisfying the junction condition in the strong
viscosity sense. More precisely, they satisfy the equation in the
classical viscosity sense away from the junction hyperplane and they
satisfy the junction condition in the viscosity sense with test
functions that are continuous in $J$ and $C^1$ on each $J_i$ up to
$\Gamma$ -- see Subection~\ref{s.v} for a precise definition.
\begin{theo}[Comparison principle on a multi-dimensional junction]\label{th:comparison}
  Assume that the Hamiltonians $H_i$ satisfy \eqref{assum:H}, the flux
  limiter $A$ satisfies \eqref{assum:Aqc} with $A\ge A_0$ where $A_0$
  is defined in \eqref{eq:A0}, and that the initial datum $u_0$ is
  uniformly continuous.  Then for all flux-limited sub-solution $u$
  and flux-limited super-solution $v$ of
  \eqref{eq:hj-FA}-\eqref{eq:ic-0} satisfying for some $T>0$ and
  $C_T>0$ and $X_0 \in J$,
\begin{equation}\label{eq::27}
\begin{cases} u(t,X)\le C_T (1+ d(X_0,X)),\\ v(t,X)\ge -C_T(1+d(X_0,X)),\end{cases}
\quad
\text{for all}\quad (t,X) \in [0,T)\times J,
\end{equation}
and such that $u(0,X) \le u_0 (X) \le v(0,X)$ for all $X \in J$, we
have
\(u\le v  \text{ in } [0,T)\times J.\)
\end{theo}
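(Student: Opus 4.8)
\textbf{Proof strategy for Theorem~\ref{th:comparison}.}
The plan is to follow the classical doubling-of-variables scheme, but replacing the usual quadratic penalization in space by a specifically designed \emph{vertex test function} $G$ on $J\times J$ that is adapted to the junction geometry. The key object is a function $(X,Y)\mapsto G(X,Y)$ that behaves like $|X-Y|^2$ away from the junction interface $\Gamma$, vanishes on the diagonal, is smooth enough on each pair of branches $J_i\times J_j$, and, crucially, satisfies a one-sided inequality relating the Hamiltonians $H_i$ (in the $X$ variable), $H_j$ (in the $Y$ variable) and the flux limiter $A$ on $\Gamma$: schematically, for $X\in J_i$, $Y\in J_j$,
\[
H_i\bigl(D_X G(X,Y)\bigr) - H_j\bigl(-D_Y G(X,Y)\bigr) \ge -\omega(\text{error}),
\]
with the correct compatibility at $\Gamma$ so that the junction condition $F_A$ can be tested. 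Constructing this multi-dimensional vertex test function, extending the one-dimensional construction of \cite{im}, is the heart of the matter; I would state it as a separate proposition and use it as a black box here.

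First I would reduce to the case where $u$ and $v$ are bounded and the statement is on a finite horizon, using the sublinear bounds \eqref{eq::27}: introduce the penalized quantity
\[
M = \sup_{[0,T)\times J\times J}\ u(t,X) - v(s,Y) - \varepsilon\, G\!\left(\tfrac{X}{\varepsilon},\tfrac{Y}{\varepsilon}\right) - \frac{(t-s)^2}{2\eta} - \alpha\bigl(d(X_0,X)+d(X_0,Y)\bigr) - \frac{\gamma}{T-t},
\]
where $\varepsilon,\eta,\alpha,\gamma$ are small parameters. The sublinear growth together with the quasi-convex coercive structure of the $H_i$ and of $A$ (recall $A\ge A_0$ and Lemma~\ref{lem:ai-convex}) guarantees that the supremum is attained at some interior point $(\bar t,\bar s,\bar X,\bar Y)$ with $\bar t,\bar s>0$ for $\varepsilon,\eta$ small, after the usual argument ruling out $\bar t=0$ via the uniform continuity of $u_0$ and the Lipschitz-in-space bound on $G$. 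One then distinguishes cases according to whether $\bar X,\bar Y$ lie on the open branches $J_i^*$ or on $\Gamma$.

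In the generic case $\bar X\in J_i^*$, $\bar Y\in J_j^*$ one uses the equations on the branches together with the gradient estimate coming from $G$; the delicate case is when $\bar X$ or $\bar Y$ sits on $\Gamma$, where one must invoke the flux-limited formulation of super- and sub-solutions (Theorem~\ref{th:fa-gen-sup} legitimizes working with $F_A$) and the defining inequality of $G$ at the interface to absorb the junction terms. This is exactly where quasi-convexity and continuity of $A$ are used: they ensure that the flux-limited viscosity inequalities can be tested with the one-branch pieces of $G$ and that the relaxed notion of solution at $\Gamma$ is equivalent to the convenient one. Letting successively $\varepsilon\to 0$, then $\alpha\to 0$, $\eta\to 0$, $\gamma\to 0$ yields $M\le 0$, hence $u\le v$. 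The main obstacle, as indicated, is the construction and the fine properties of the multi-dimensional vertex test function $G$ — in particular matching the $d$ transverse directions $x'$ smoothly across all $N$ branches while keeping the Hamiltonian inequality above — everything else is a careful but standard adaptation of the one-dimensional comparison proof in \cite{im}.
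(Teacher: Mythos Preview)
Your proposal is correct and follows essentially the same approach as the paper: doubling of variables with the vertex test function of Theorem~\ref{th::G} scaled by $\varepsilon$, together with an a priori control $u(t,X)\le v(s,Y)+C(1+d(X,Y))$ and the compatibility condition~\eqref{eq:shorthandbis}. One point worth noting: the case distinction you describe for $\bar X$ or $\bar Y$ on $\Gamma$ is in fact unnecessary, since the regularity property~i) of $G$ (namely $G(X,\cdot),\,G(\cdot,Y)\in C^1(J)$) together with the unified notation~\eqref{eq:shorthand} allows the two viscosity inequalities and their subtraction to be written uniformly at all points --- this is precisely the reason the vertex test function is built the way it is.
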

\begin{rem} \label{rem:gen}
  A comparison principle holds true for Hamilton-Jacobi equations
  associated with more general junction conditions, see
  \eqref{eq:hj-f} and Assumptions~\eqref{assum:F} and
  \eqref{assum:F-convexity} in Appendix. It is a consequence of the
  fact that imposing general junction conditions reduce to imposing
  $F_A$ ones, see Theorem~\ref{th:class} in Appendix. These results
extend the ones obtained in the one-dimensional setting \cite{im}. 
\end{rem}
\begin{rem}
  Extensions to Hamiltonians depending on $(t,x)$ is not difficult and
  is explained in \cite{im} in the network setting. Such an extension
  is obtained by classically localizing the study around a point
  $(\bar t, \bar x) \in (0,T) \times \Gamma$ at the beginning of the
  proof of the comparison principle. In the remainder of the proof, one
  uses the vertex test function associated with the Hamiltonians whose
  dependence in $(t,x)$ is frozen at $(\bar t, \bar x)$, see \cite{im}
  for details. 
\end{rem}
\begin{rem}
  This comparison principle holds true for semi-solutions growing at
  most linearly, see \eqref{eq::27}. Such a condition is classical for
such equations. 
\end{rem}

\paragraph{Difficulties related to strong uniqueness for \eqref{eq:hj-FA}.}
Getting a strong uniqueness result is known to be difficult for
Hamilton-Jacobi equations such as \eqref{eq:hj-FA}. Indeed, even the
special case $N=2$ is difficult since it corresponds to the study of a
Hamilton-Jacobi equation posed in an Euclidian space whose Hamiltonian
is discontinuous with respect to the space variable along a
hyperplane. More precisely, two different continuous Hamiltonians are
chosen on either side of the hyperplane but they do not coincide on
it. This discontinuity is identified as a major difficulty when
proving a strong uniqueness result such as a comparison principle
(Theorem~\ref{th:comparison}). It is classically proved by the
doubling variable technique: the supremum of $u-v$ in $(0,T) \times J$
is approximated by the supremum of $u(t,x)-v(t,y)-P_\eps(x,y)$ in
$(0,T) \times J \times J$ where $P_\eps (x,y)$ is a penalization
function; the behaviour at infinity of the function $P_\eps(x,y)$ and the
smallness of the parameter $\eps$ force $x$ to be close to
$y$. Classically, $P_\eps (x,y)$ is chosen as the quadratic function
$\eps^{-1} |x-y|^2$; but with such a choice, the proof fails because
of the discontinuity of the Hamiltonian through the
hyperplane. Indeed, two viscosity inequalities are written at points
$(\bar t, \bar x)$ and $(\bar t,\bar y)$ if the approximate supremum
is reached at $(\bar t, \bar x,\bar y)$; if $\bar x$ and $\bar y$ are
not in the same $J_i$, then the Hamiltonians appearing in the two
viscosity inequalities are different.  Some authors impose
compatibility conditions on Hamiltonians but we do not want to do
so. Instead, a natural idea \cite{acct,im} is to design the
penalization function $P_\eps(x,y)$ in such a way that it compensates
the lack of compatibility conditions between Hamiltonians.  Here, it
is chosen in the form $\eps G(x/\eps,y/\eps)$ for some function $G$
referred to as a vertex test function.  The compatibility conditions
on the gradients \ref{vi*}) of $G$ in Theorem~\ref{th::G} address the
lack of compatibility of Hamiltonians.

Apart from the compatibility conditions on the gradients, see
\ref{vi*}), other properties of the vertex test function $G$ are
needed.  The regularity of $G$, see \ref{i*}), allows one to use it
as a test function in $X$ and $Y$. The bound from below, see
\ref{ii*}), the compatibility on the diagonal, see \ref{iii*}), and
the superlinearity, see \ref{iv*}), ensure that $G$ can be used as a
penalization function.  The gradient bounds, see \ref{v*}), are
necessary to handle the unboundedness of the domain.

\paragraph{Difficulties associated with the multidimensional setting.}
The construction of the vertex test function is constructed in two
steps: first an approximate vertex test function is defined, which
satisfies the desired properties except on the set $\{ x=y\}$ of
$J \times J$; second this approximate vertex test function is
regularized on the set $\{ x=y\}$. In the multidimensional setting, each
step is significantly more difficult than in the one-dimensional
setting. When constructing the approximate vertex test function, an
optimization problem with equality constraints has to be solved and
the optimizer is defined implicitly through first order optimality
conditions, while in the one-dimensional setting, this optimization
problem is trivial and the optimizer explicit. As far as the second
step is concerned, it is much more involved to check that the
regularization procedure does not affect the other properties.

\paragraph{Comparison with known results.}
In the special case $N=2$, our results are related to \cite{bbc,bbc2}
where an optimal control problem in a two-domain setting is
studied. In these works, the state of the system evolves according to
two different dynamics on each side of an hypersurface. Moreover, the
two dynamics at the interface corresponding to the maximal and minimal
Ishii's discontinuous solutions of the associated Hamilton-Jacobi
equation are identified. One of the two value functions is
characterized in terms of partial differential equations. We showed in
\cite{im} that, in the one-dimensional setting, both value functions
can be conveniently characterized by using the notion of flux-limited
solutions introduced in \cite{im}. The result of the present paper
indicates that such a connexion holds in the general two-domain
setting, even if this is out of the scope of the present paper.
Moreover, we can deal with quasi-convex Hamiltonians instead of convex
ones.

Achdou, Oudet and Tchou \cite{aot0} use ideas from \cite{bbc,bbc2} to
get a simple proof of the comparison principle on a (one-dimensional)
junction for stationary equations. Then Oudet \cite{os} extended the
results to the multi-dimensional setting, getting a comparison
principle for stationary problems. The reader can observe that this
strong uniqueness result is very similar to the comparison principle
obtained in the present paper; the two works were independent and
achieved approximately at the same time. A two-domain Hamilton-Jacobi
equation of the form \eqref{eq:hj-FA} also appears naturally in the
singular perturbation problem studied in \cite{aot}. 

We would like to mention that the results of \cite{bbc,bbc2} were
recently extended to the general case of stratified spaces in the very
nice paper \cite{bc}. Such results also extend the ones from
\cite{bh}. Some results for discontinuous solutions of Hamilton-Jacobi
equations in stratified spaces can be found in \cite{hz}. In
\cite{csm}, the authors study eikonal equations in ramified spaces.
The reader is also referred to \cite{rz,rsz} for optimal control
problems in multi-domains.  In particular, the authors impose some
transmission conditions.  Up to a certain extent, some of our results
are related to the ones in \cite{gh}, in particular, in the case of
source terms located on hyperplanes. We finally refer the reader to
the numerous references given in \cite{im} and the comments there.

We mentioned that our main motivation for constructing such a vertex
test function is the proof of a comparison principle for
Hamilton-Jacobi equations. Two years after the first version of this
paper was posted, a simpler and alternative proof of this strong
uniqueness result was given in \cite{bbcim}; it is obtained as a
combination of the ideas from \cite{bbc,bbc2,im} and the present paper.
We also recall that the results of the present paper (see
Subsection~\ref{sec:ishii}) are used in \cite{in}.

\begin{rem}
  In a first version of this paper, the material was presented in a
  different way. In order to emphasize the main contribution of the
  present article, we decided to focus the presentation on the
  construction of the vertex test function and the proof of the
  comparison principle for flux-limited solutions and to move into an
  Appendix results related to relaxed solutions. The reason for doing
  so is that the proofs of the results in the Appendix are (more or
  less) a straightforward adaptation from the one-dimensional case.
  The reader is also referred to \cite{in} where the results in the
  Appendix are generalized to the case of degenerate parabolic
  equations. Moreover, the multi-dimensional results of
  Subsection~\ref{sec:ishii} are used in \cite{in}. 
\end{rem}

\paragraph{Organization of the article.} The paper is organized as
follows. We start with the short Section~\ref{sec:short} where
important functions related to Hamiltonians are defined.
Section~\ref{s.G} is devoted to the construction of an approximate
vertex test function in the case where the Hamiltonians are smooth and
convex. Then the main theorem, Theorem~\ref{th::G}, is proved in
Section~\ref{sec:gen}. Section~\ref{sec:flux-limited} contains the
definition of flux-limited solutions and the proof of the comparison
principle (Theorem~\ref{th:comparison}).  Appendix~\ref{sec:appendix}
begins with the definition of relaxed solutions for Hamilton-Jacobi
equations posed on multidimensional junctions
(Subsection~\ref{s.relaxed}).  The classification of general junction
conditions is explained in Subsection~\ref{s23}.
Subsection~\ref{sec:ishii} is devoted to the special case $N=2$ where
maximal and minimal Ishii solutions are related to flux-limited
solutions.

\paragraph{Notation.} The junction hyperplane $\Gamma$ is the common
boundary of $J_i$: we have $\Gamma = \partial J_i$. We identify
$\Gamma$ with $\R^d$ and we do not write the injection of $\R^d$ into
$J_i$: $x' \mapsto (x',0)$.  For this reason, we write
indisctinctively $x=(x',0) \in \Gamma$ and $x' \in \Gamma$.

We set
\begin{equation}\label{eq:C1}
C^1(J)=\left\{\phi\in C(J),\quad \mbox{$\phi$ restricted to $J_i$ is $C^1$ for $i=1,\dots,N$}\right\}.
\end{equation}

 For a function $f : D \to \R$, $\epi f$ denotes its
epigraph $\{ (X,r) \in D \times \R \colon r \ge f(X) \}$. 

\section{Important functions related to  Hamiltonians}
\label{sec:short}

This short section is devoted to the introduction of important
functions that are associated with Hamiltonians $H_i$: the ``natural''
flux limiter $A_0$, the monotone parts $H_i^\pm$, the inverse
functions $\pi_i^\pm$.

The functions $A_0,A_1,\dots, A_N$ are defined in \eqref{eq:A0}, 
\[
A_0(p')=\max_{i=1,...,N} A_i (p') \quad \text{ with } \quad A_i (p') = \min_{p_i \in \R} H_i(p',p_i).
\]
We will prove that the functions $A_i$, $i=0,\dots, N$ are
quasi-convex, continuous and coercive in $p'$ (see
Lemma~\ref{lem:ai-convex} in Appendix).

The Hamiltonian $H_i(p',p_i)$ is defined for 
$p=(p',p_i) \in \R^{d+1}$.  The minimal minimizer of
$p_i \mapsto H_i(p',p_i)$ is denoted by $\pi_i^0(p')$. The
functions $H_i^-$ and $H_i^+$ are defined as follows
\begin{eqnarray*}
 H_i^- (p',p_i) = \begin{cases} H_i (p',p_i) & \text{ if } p_i \le \pi_i^0(p') \\
H_i (p',\pi_i^0 (p')) & \text{ if } p_i \ge \pi_i^0(p')
\end{cases} \\
 H_i^+ (p',p_i) = \begin{cases} H_i (p',p_i) & \text{ if } p_i \ge \pi_i^0(p') \\
H_i (p',\pi_i^0 (p')) & \text{ if } p_i \le \pi_i^0(p').
\end{cases}
\end{eqnarray*}
\begin{figure}
\begin{center}
\includegraphics[height=4cm]{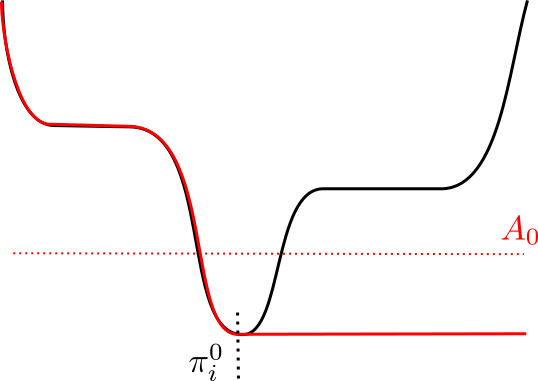} \hspace{4mm}
\includegraphics[height=4cm]{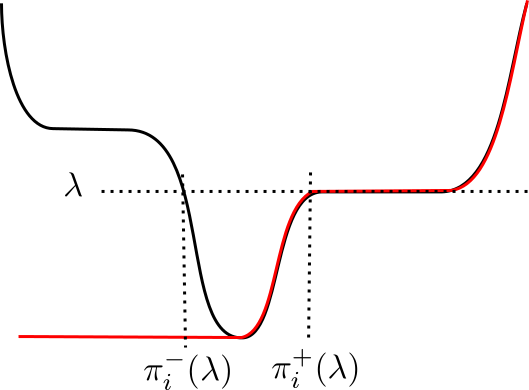}
\caption{Monotone parts $H_i^\pm$ of a Hamiltonian $H_i$ ($H_i^-$ on
  the left, $H_i^+$ on the right). The Hamiltonian is in black,
  monotone parts in red. The tangent variable $p'$ is not shown. In
  this example, the minimum $A_i$ of $H_i$ is lower than $A_0$. The
  ``inverse'' functions $\pi_i^\pm$ of $H_i$ are also shown. }
\label{fig:2} 
\end{center}
\end{figure}
For $\lambda \ge A_i (p')=\min_{p_i \in \R} H_i (p',p_i)$, the functions $\pi_i^\pm$ 
are defined by
\begin{equation} \label{eq::21}
\begin{cases}
\pi_i^+ (p',\lambda) &= \inf \{  p_i : H_i (p',p_i) = H_i^+ (p',p_i) =
\lambda \} \\
  \pi_i^- (p',\lambda) &= \sup \{  p_i : H_i (p',p_i) = H_i^- (p',p_i) =
\lambda \}. 
\end{cases}
\end{equation}

We introduce the  shorthand notation
\begin{equation}\label{eq:shorthand}
H(X,p',p)=\left\{\begin{array}{lll}
H_i(p',p) &\quad \text{for} \quad p=p_i &\quad \text{if} \quad X\in
J_i \setminus \Gamma,\\
F_A(p',p) &\quad \text{for} \quad p=(p_1,...,p_N) &\quad \text{if}
\quad X \in \Gamma.
\end{array}\right.
\end{equation}
In particular, keeping in mind the definition of $Du$ (see
\eqref{eq:grad}), Problem~\eqref{eq:hj-FA} on the junction can be
rewritten as follows
\[u_t + H(X,Du)=0 \quad \text{for all}\quad (t,X)\in (0,+\infty)\times J.\]

\section{Approximate construction in the smooth convex case}
\label{s.G}

This section is devoted to the construction of an approximate vertex
test function $G^0$ in the case where the Hamiltonians and the flux
limiter are smooth and convex. More precisely, we construct a function
$G^0$ that satisfies the desired properties of the vertex test
function except on the subset $\{x=y\}$ of $J \times J$.

We assume throughout this section that the Hamiltonians $H_i$ satisfy
the following assumptions for $i=1,...,N$,
\begin{equation}\label{eq:hi-convex}
\left\{\begin{array}{l}
H_i \in C^2(\R^{d+1}) \quad \text{with}\quad D^2 H_i>0 
\quad \text{in}\quad \R^{d+1},\\
\lim_{|P|\to +\infty} \frac{H_i(P)}{|P|} =+\infty
\end{array}\right.
\end{equation}
and the flux limiter 
\begin{equation}
  \label{eq:a-convex}
 A_0\le  A \in C^2 (\R^d) \quad \text{ and } D^2 A > 0\quad \text{in}\quad \R^{d+1}.
\end{equation}

Recall that $\pi_i^\pm$ are defined in \eqref{eq::21}. 
\begin{lem}[Properties of $\pi_i^\pm$]\label{lem:pi-pm}
  Assume \eqref{eq:hi-convex}. Then $\pi^\pm_i(p',\cdot) \in
  C^2(A_i(p'), +\infty)$ and $\pi^\pm_i \in C(\epi A_i)$. Moreover,
  $\pi^\pm_i$ is concave w.r.t. $(p',\lambda)$ in $\epi A_i$ and $\pm
  \pi^\pm_i$ is non-decreasing w.r.t. $\lambda$.
\end{lem}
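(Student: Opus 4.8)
The plan is to establish the claimed properties of $\pi_i^\pm$ by working directly from the implicit definition $H_i(p',\pi_i^\pm(p',\lambda)) = \lambda$ and using the strict convexity of $H_i$. First I would observe that under \eqref{eq:hi-convex}, for fixed $p'$ the function $p_i \mapsto H_i(p',p_i)$ is strictly convex, attains its minimum $A_i(p')$ at the unique point $\pi_i^0(p')$, and is strictly decreasing on $(-\infty,\pi_i^0(p')]$ and strictly increasing on $[\pi_i^0(p'),+\infty)$; coercivity guarantees every level $\lambda > A_i(p')$ is attained on both branches. Hence $\pi_i^+(p',\lambda)$ (resp. $\pi_i^-(p',\lambda)$) is the unique solution on the increasing (resp. decreasing) branch, it is well defined for $\lambda \ge A_i(p')$, and the monotonicity claim $\pm\pi_i^\pm$ non-decreasing in $\lambda$ is immediate from this branch description.

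Next I would handle the $C^2$ regularity on $\{\lambda > A_i(p')\}$ via the implicit function theorem: at such a point $\partial_{p_i} H_i(p',\pi_i^+(p',\lambda)) > 0$ (strictly, since we are strictly past the minimum, using $D^2 H_i > 0$ which forces $\partial_{p_i} H_i$ to be strictly increasing in $p_i$ and zero only at $\pi_i^0$), so $\pi_i^+$ is $C^2$ in $(p',\lambda)$ there, and similarly for $\pi_i^-$ where the derivative is strictly negative. Continuity up to the boundary $\epi A_i$, i.e. at points where $\lambda = A_i(p')$, would be obtained by a separate squeeze/compactness argument: if $(p_n',\lambda_n) \to (p_0',A_{i}(p_0'))$ in $\epi A_i$, coercivity of $H_i$ (uniform in $p'$ on compacts, via Lemma~\ref{lem:ai-convex} giving continuity of $A_i$) bounds $\pi_i^\pm(p_n',\lambda_n)$, and any limit point $q$ satisfies $H_i(p_0',q) = A_i(p_0')$, forcing $q = \pi_i^0(p_0') = \pi_i^\pm(p_0',A_i(p_0'))$; this pins down the limit and gives continuity on $\epi A_i$.

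The main obstacle, and the part deserving the most care, is the concavity of $\pi_i^\pm$ jointly in $(p',\lambda)$ on $\epi A_i$. The natural approach is to recognize $\epi(\pi_i^-)$-type sets, or rather to argue geometrically: fix the $+$ case. The set $\{(p',p_i) : p_i \ge \pi_i^0(p')\}$ is where $\pi_i^+$ inverts $H_i$, and one wants that $\{(p',\lambda,\mu) : \mu \le \pi_i^+(p',\lambda),\ \lambda \ge A_i(p')\}$ is convex — equivalently that $\pi_i^+$ is concave. I would prove this by showing the hypograph of $\pi_i^+$ over $\epi A_i$ equals $\{(p',\lambda,\mu) : \mu \ge \pi_i^0(p')\ \text{or}\ \mu\ \text{arbitrary below},\ H_i(p',\mu) \le \lambda\}$ intersected appropriately; more cleanly, on the region $\mu \ge \pi_i^0(p')$ one has $\mu \le \pi_i^+(p',\lambda) \iff H_i(p',\mu) \le \lambda$, so the relevant hypograph is $\{(p',\mu,\lambda) : \mu\ \text{in the upper branch},\ H_i(p',\mu)\le \lambda\}$, which is convex precisely because $H_i$ is convex (its epigraph is convex) — one just checks that restricting to $\mu \ge \pi_i^0(p')$ does not spoil convexity, which follows since $\pi_i^0$ is determined by $\partial_{p_i}H_i = 0$ and the relevant half is cut out by the convex condition coming from the gradient. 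The $-$ case is symmetric after reflecting $p_i \mapsto -p_i$. I would present this convexity-of-epigraph argument as the heart of the proof, then deduce joint concavity of $\pi_i^\pm$, and finally note that concavity plus the established continuity on the open set yields the stated regularity package.
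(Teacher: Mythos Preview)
Your treatment of regularity (implicit function theorem) and monotonicity matches the paper's, and you supply more detail on continuity up to $\partial(\epi A_i)$ than the paper does, which is fine.

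The concavity argument is where you diverge, and where there is a gap. You aim to show the hypograph of $\pi_i^+$ is convex by identifying it, on the region $\mu\ge\pi_i^0(p')$, with $\{(p',\lambda,\mu):H_i(p',\mu)\le\lambda\}$, and then you assert that ``restricting to $\mu\ge\pi_i^0(p')$ does not spoil convexity'' because this half is ``cut out by the convex condition coming from the gradient''. That justification is invalid: the set $\{(p',\mu):\partial_{p_i}H_i(p',\mu)\ge0\}$ is in general \emph{not} convex (monotonicity of the gradient of a convex function does not make its super-level sets convex). Your route can be salvaged by observing instead that the \emph{full} hypograph equals $\{(p',\lambda,\mu): H_i^+(p',\mu)\le\lambda\}$ with $H_i^+(p',\mu)=\inf_{s\ge0}H_i(p',\mu+s)$, and then invoking the fact that partial minimization of a jointly convex function over a convex constraint preserves convexity --- but this is heavier than needed.

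The paper's argument is a two-line direct computation: for $(p',\lambda),(q',\mu)\in\epi A_i$ and $t\in(0,1)$,
\[
t\lambda+(1-t)\mu \;=\; tH_i(p',\pi_i^+(p',\lambda))+(1-t)H_i(q',\pi_i^+(q',\mu)) \;\ge\; H_i\bigl(tp'+(1-t)q',\,t\pi_i^+(p',\lambda)+(1-t)\pi_i^+(q',\mu)\bigr)
\]
by convexity of $H_i$; since for any $(r',\nu)\in\epi A_i$ the sublevel set $\{s:H_i(r',s)\le\nu\}$ is the interval $[\pi_i^-(r',\nu),\pi_i^+(r',\nu)]$, this gives $t\pi_i^+(p',\lambda)+(1-t)\pi_i^+(q',\mu)\le \pi_i^+(tp'+(1-t)q',t\lambda+(1-t)\mu)$ directly. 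This is the argument you should use.
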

\begin{proof}
The regularity of $\pi^\pm$ can be derived   thanks to the inverse function
  theorem. As far as the concavity of $\pi_i^+$ is concerned, we can
  drop the subscript $i$ and we do so for clarity. let
  $(p',\lambda), (q',\mu)  \in
  \epi A$ and $t \in (0,1)$. Then
\begin{align*}
 t \lambda + (1-t) \mu &= t H(p',\pi^+(p',\lambda)) + (1-t) H (q',
\pi^+ (q',\mu)) \\
& \ge H(tp' + (1-t)q', t\pi^+(p',\lambda) + (1-t)\pi^+
(q',\mu)).
\end{align*}
Hence 
\[\pi^+(tp' + (1-t)q',t\lambda + (1-t)\mu) \ge t \pi^+(p',\lambda) + (1-t)\pi^+
(q',\mu)\]
which is the desired result. The monotonicity of $\pi^+$ is easy to
derive from the monotonicity of $H$. The proof of the lemma is now complete. 
\end{proof}

We next define the function $G^0$ for $X \in J_i, Y \in J_j$,
$i,j=1,...,N$, as follows,
\begin{equation}\label{eq:def-g0}
G^0(X,Y)=\sup_{(P,\lambda)\in \mathcal{G}^{ij}_A} (p'\cdot
(x'-y') + p_i x -p_j y -\lambda) 
\end{equation}
where
\begin{equation}\label{eq:germ}
 \mathcal{G}^{ij}_A = 
\begin{cases}
\{ (P,\lambda) \in \R^{d+3} \times \R: P= (p',p_i,p_j), 
 \lambda=H_i(p',p_i)=H_j(p',p_j) \ge
A(p') \} & \text{ if } i \neq j \\
\{ (P,\lambda) \in \R^{d+2} \times \R: P=(p',p_i), 
 \lambda= H_i(p',p_i) \ge A(p') \} & \text{ if } i=j
\end{cases}
\end{equation}
with $A \ge A_0$. 

The main result of this section is the following proposition. 
\begin{pro}[An approximate test function in the smooth convex case]\label{pro:vertex-smooth}
  Let $A\ge A_0$ with $A_0$ given by \eqref{eq:A0} and assume that the
  Hamiltonians satisfy \eqref{eq:hi-convex} and the flux limiter $A$
  satisfies \eqref{eq:a-convex}.  Then $G^0$ satisfies
\begin{enumerate}[i)]
\item \emph{(Regularity)}
\[G^0\in C(J^2)\quad \mbox{and}\quad \left\{\begin{array}{l}
G^0\in C^1(\left\{(X,Y)\in J\times J,\quad x\not=y\right\}),\\
G^0(0,\cdot)\in C^1(J) \quad \mbox{and}\quad G^0(\cdot,0)\in C^1(J);
\end{array}\right.\]
\item \emph{(Bound from below)} $G^0 \ge G^0 (0,0);$
\item \emph{(Compatibility conditions)} \eqref{eq::85} holds with
  $\gamma=0$; and \eqref{eq:shorthandbis} holds with $\gamma =0$ for
  $X=(x',x)$, $Y=(y',y)$ with $x\not=y$ or $x=y=0$;
\item \emph{(Superlinearity)} \eqref{eq::20} holds for some $g=g^0$;
\item \emph{(Gradient bounds)} \eqref{eq::19} holds only for $(X,Y) \in J^2$
  such that $x \neq y$  or $(x,y)=(0,0)$;
\end{enumerate}
\end{pro}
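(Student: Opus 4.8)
\textbf{Proof proposal for Proposition~\ref{pro:vertex-smooth}.}

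The plan is to exploit the fact that $G^0$ is defined as a Legendre-type supremum of affine functions over the ``germ'' $\mathcal{G}^{ij}_A$, so that $G^0(\cdot, Y)$ and $G^0(X,\cdot)$ are automatically convex on each branch and $G^0$ is a convex-analytic object whose subgradients live in the germ. First I would establish the following structural dictionary: for fixed $Y = (y',y) \in J_j$, the function $X \mapsto G^0(X,Y)$, restricted to $J_i$, is the supremum of affine functions $X \mapsto p' \cdot (x'-y') + p_i x - (p_j y + \lambda)$ over $(P,\lambda) \in \mathcal{G}^{ij}_A$; hence it is convex, lower semicontinuous, and its slopes $(p', p_i)$ are constrained by $H_i(p',p_i) = \lambda \ge A(p')$. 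The superlinearity (iv) is the cheapest item: since $A \ge A_0$ and each $H_i$ is superlinear with $A_i$ convex, the set of admissible slopes $(p',p_i)$ with $H_i(p',p_i) \ge A(p')$ contains a ball of radius $\to \infty$ as $\lambda \to \infty$ while $\lambda$ grows only linearly in the slope (by superlinearity of $H_i$), so $G^0(X,Y) \ge c\,d(X,Y) \log d(X,Y)$-type bound — more precisely one takes $g^0(a) = \sup\{ ra - \Lambda(r) \}$ where $\Lambda(r) = \sup\{ \lambda : \exists (P,\lambda)\in\mathcal G, |(p',p_i)|\le r\}$, which is finite by coercivity and sublinear in $r$ by superlinearity, giving $g^0$ superlinear.

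Next I would treat the bound from below (ii) and the diagonal compatibility with $\gamma = 0$ in (iii). Taking $X = Y$, every affine function in the sup evaluates to $-\lambda + (p_i - p_j) x$; when $i = j$ this is $-\lambda \le -A(p') \le -A_0(p') \le -\min_{p_i} H_i(p',p_i)$, but we must be careful — actually the key point is that $G^0(X,X)$ is computed via $\mathcal G^{ii}_A$ and equals $\sup\{ -\lambda : \lambda = H_i(p',p_i) \ge A(p')\}$, wait this is $-\inf \lambda = -A(p')$... let me re-think: for $X \in J_i$, $G^0(X,X) = \sup_{H_i(p',p_i)\ge A(p')} (-\lambda) $ which is problematic unless the germ forces $\lambda$ large. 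The correct reading is that $G^0(X,X) = \sup_{(P,\lambda)} (-\lambda)$ is $-\min$ over the germ, and one shows this min is attained giving $G^0(X,X) = -A_0$-type constant independent of $X$, hence $G^0(X,X) = G^0(0,0)$ and (iii) with $\gamma=0$; simultaneously $G^0(X,Y) \ge G^0(X,X)|_{\text{extended}}$ along the diagonal gives (ii). I would make this precise by identifying, for the convex function $G^0$, that $(0,0)$ is a point where $0$ belongs to both partial subgradients (since $(P,\lambda)$ with $p'=0$, $p_i = p_i^0$ realizing $A_0$ is in the germ), so $G^0$ attains its min at $(0,0)$.

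The real work is in (i), (iii)-gradient-compatibility, and (v). For regularity away from the diagonal: on $\{x \neq y\}$ the two branches genuinely interact through the shared $p'$ and the constraint $H_i(p',p_i) = H_j(p',p_j) = \lambda$; I would argue $C^1$-ness by showing the sup in \eqref{eq:def-g0} is attained at a \emph{unique} $(P,\lambda)$ (uniqueness from strict convexity of the $H_i$ via Lemma~\ref{lem:pi-pm}'s strict monotonicity of $\pi_i^\pm$, which forces $\pi_i^+(p',\lambda)$ to pin down $p_i$ given $p'$ and $\lambda$, and then strict convexity of $A$ together with the $d$-dimensional optimization over $p'$), and then invoking the standard fact that a sup of affine functions attained uniquely is differentiable there with gradient the maximizer. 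The gradient bound (v): on $\{d(X,Y)\le K\} \cap \{x\neq y \text{ or } (x,y)=(0,0)\}$, the maximizing slope $(p',p_i)$ cannot be too large, because if $|(p',p_i)| \to \infty$ then $\lambda = H_i(p',p_i) \to \infty$ by coercivity and the affine function's value $p'(x'-y') + p_i x - p_j y - \lambda$ would be driven to $-\infty$ (using superlinearity of $H_i$ to beat the linear-in-slope gain since $d(X,Y) \le K$), contradicting optimality versus a fixed competitor; hence $C_K := \sup$ of admissible $|(p',p_i)|$ is finite. The hardest item — and the one I expect to be the main obstacle — is the \textbf{gradient compatibility} \eqref{eq:shorthandbis} with $\gamma = 0$: one must show $H(Y, -D_Y G^0(X,Y)) - H(X, D_X G^0(X,Y)) \le 0$. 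The point is that $D_X G^0 = (p', p_i)$ and $-D_Y G^0 = (p', p_j)$ share the \emph{same} $p'$ (that is forced by the germ, since the affine function is linear in $x'-y'$ with a single coefficient $p'$), and at the maximizer $H_i(p',p_i) = H_j(p',p_j) = \lambda$ with $\lambda \ge A(p')$. When $X \in J_i^*$ and $Y \in J_j^*$ this gives $H(X, D_XG^0) = H_i(p',p_i) = \lambda = H_j(p',p_j) = H(Y,-D_YG^0)$, so the difference is exactly $0$. When $X$ or $Y$ lies on $\Gamma$ (the case $x = y = 0$), $H(X,\cdot)$ becomes $F_A(p', p_1,\dots,p_N)$ and one must check $F_A$ evaluated at the optimal slope vector is still $\le \lambda$: here I would use that at the optimum each $p_i \ge \pi_i^0(p')$ wait — actually one needs the relevant $p_i = \pi_i^+(p',\lambda) \ge \pi_i^0$, so $H_i^-(p',p_i) = H_i(p',\pi_i^0) = A_i(p') \le A_0(p') \le A(p')$, whence $F_A(p',p) = \max(A(p'), \max_i H_i^-(p',p_i)) = A(p') \le \lambda$, closing the estimate. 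Assembling: I would first prove uniqueness of the maximizer (the crux, via strict convexity), deduce (i), then read off (iii)-gradients and (ii) from the maximizer's membership in the germ, then (v) by the coercivity/superlinearity truncation argument, and finally (iv) by the Legendre-transform computation.
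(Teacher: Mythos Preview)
Your overall architecture is the paper's: $G^0$ is a Legendre-type envelope, uniqueness of the maximiser gives $C^1$, the maximiser lives in the germ so the gradients automatically satisfy $H_i(p',p_i)=H_j(p',p_j)=\lambda\ge A(p')$, and superlinearity/gradient bounds follow from coercivity of the $H_i$. The superlinearity argument, the gradient-bound argument, and the bound from below are essentially fine (your diagonal computation is muddled but recoverable: $G^0(X,X)=-\min_{p'}A(p')$ independently of $X$).

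There is, however, a genuine gap at the step you yourself flag as the crux: \emph{uniqueness of the maximiser}. You propose to get it ``via strict convexity'', pinning down $p_i=\pi_i^+(p',\lambda)$ and then using strict convexity of $A$ for the $p'$-optimisation. This does not work as stated. The germ $\mathcal G^{ij}_A$ is \emph{not} a convex set (it is cut out by the nonlinear equalities $H_i(p',p_i)=H_j(p',p_j)=\lambda$), and after you parametrise by $(p',\lambda)$ with $p_i=\pi_i^+(p',\lambda)$, $p_j=\pi_j^-(p',\lambda)$, the reduced objective
\[
(p',\lambda)\longmapsto p'\cdot z' + z_i\,\pi_i^+(p',\lambda) + z_j\,\pi_j^-(p',\lambda) - \lambda
\]
is \emph{not} concave: by Lemma~\ref{lem:pi-pm} both $\pi_i^+$ and $\pi_j^-$ are concave, but $z_j\le 0$, so $z_j\,\pi_j^-$ is convex. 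Strict convexity of $A$ alone cannot rescue this. The paper instead first derives the KKT condition $Z=D(\alpha\cdot H)(P)$ with $\alpha$ in a simplex (Lemma~\ref{lem:}), and then proves uniqueness (Lemma~\ref{lem:P-unique}) by writing $0=D(\alpha\cdot H)(P)-D(\beta\cdot H)(Q)$, integrating along the segment, and splitting the result as $T_1+T_2$ with $T_1=\int D^2_{PP}((\alpha+\theta\bar\alpha)\cdot H)\bar P\cdot\bar P\,d\theta\ge 0$ and $T_2=\beta_0(H_i(P)-A(P))+\alpha_0(H_i(Q)-A(Q))\ge 0$; both vanishing forces $P=Q$ (up to the harmless degeneracies when $z_i=0$ or $z_j=0$). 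That argument is what makes the $C^1$ regularity go through, and your sketch does not supply a substitute for it.

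A second, smaller gap: in the gradient compatibility you treat only the interior case $X\in J_i^\ast$, $Y\in J_j^\ast$ and the fully degenerate case $x=y=0$. You still need the mixed case $X\in\Gamma$, $Y\in J_j^\ast$ (which falls under ``$x\neq y$''). There $D_XG^0$ has $p_k=\pi_k^+(p',\lambda)$ for $k\neq j$ but $p_j=\pi_j^-(p',\lambda)$, so $H_j^-(p',p_j)=\lambda$ and hence $F_A(D_XG^0)=\lambda$ with \emph{equality}, not merely $\le\lambda$. Your computation ``$F_A=\ldots=A(p')\le\lambda$'' assumes all components are $\pi_k^+$, which is false in this case; had it been true, you would get $H(Y,-D_YG^0)-H(X,D_XG^0)=\lambda-A(p')\ge 0$, the wrong sign.
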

The proof of this proposition is postponed until
Subsection~\ref{subsec:pro80}. 

\subsection{The vertex test function in $J_i \times J_j$ with $i \neq j$}

In order to prove Proposition~\ref{pro:vertex-smooth}, we first need to study the restriction $G^0_{ij}$ of 
$G^0$ to the set $J_i \times J_j$. Then, one can
write 
\[G^0_{ij}(X,Y) = \oG_{ij} (x'-y',x_i,-y_j) \]
with 
\begin{equation}\label{eq:GGG}
\oG_{ij}(Z)=\sup_{(P,\lambda)\in \mathcal{G}^{ij}_A} (P \cdot Z - \lambda)
\end{equation}
where $\mathcal{G}^{ij}_A$ is defined in \eqref{eq:germ}.
Remark that for $X \in J_i$ and $Y \in J_j$, we have $Z= X-Y \in \mathcal Q$ where
\[ \mathcal Q = \R^d \times [0,+\infty[ \times ]-\infty;0].\]
We also consider the simplex 
\[ \mathcal{T} = \{
(\alpha_i,\alpha_j,\alpha_0) \in [0,1]^3: \alpha_i + \alpha_j +
\alpha_0 =1 \}.\]
\begin{lem}[Necessary conditions for the maximiser : $ij$-version]\label{lem:}
Given $Z \in \mathcal Q$, the supremum defining $\oG_{ij}(Z)$ is reached for some
$(P,\lambda) \in \mathcal{G}^{ij}_A$ and there exists
$(\alpha_i,\alpha_j,\alpha_0) \in \mathcal{T}$ such that 
\[ Z = D (\alpha \cdot H) (P) \]
with $H = (H_i,H_j,A)$. 
\end{lem}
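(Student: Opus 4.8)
The statement asserts two things: (i) the supremum defining $\oG_{ij}(Z)$ is attained, and (ii) at a maximiser $(P,\lambda)$ one can write $Z$ as a convex combination $D(\alpha\cdot H)(P)$ with $\alpha=(\alpha_i,\alpha_j,\alpha_0)\in\mathcal T$ and $H=(H_i,H_j,A)$. The natural route is convex duality/Lagrange multipliers: $\oG_{ij}$ is, up to sign conventions, the Legendre-type transform of the (lower semicontinuous, convex) support function of the germ set $\mathcal G^{ij}_A$, and the constraint defining $\mathcal G^{ij}_A$ is precisely $H_i(p',p_i)=H_j(p',p_j)$, $H_i(p',p_i)\ge A(p')$. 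Since $Z\in\mathcal Q$ and the Hamiltonians are superlinear (see \eqref{eq:hi-convex}), the map $(P,\lambda)\mapsto P\cdot Z-\lambda$ is bounded above on $\mathcal G^{ij}_A$ and goes to $-\infty$ along any unbounded sequence in the relevant directions, so existence of a maximiser follows from a standard coercivity-plus-closedness argument (the set $\mathcal G^{ij}_A$ is closed because $H_i,H_j,A$ are continuous).

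\textbf{Key steps.} First, I would recast the problem: on $\mathcal G^{ij}_A$ we have $\lambda=H_i(p',p_i)=H_j(p',p_j)$, so the objective is $\Phi(p',p_i,p_j)=p'\cdot z'+p_i z_i+p_j z_j-H_i(p',p_i)$ subject to the equality constraint $g_1:=H_i(p',p_i)-H_j(p',p_j)=0$ and the inequality constraint $g_2:=A(p')-H_i(p',p_i)\le 0$, where $Z=(z',z_i,z_j)$ with $z_i=x_i\ge 0$, $z_j=-y_j\le 0$. Second, I would prove attainment: using $\lim_{|P|\to\infty}H_i(P)/|P|=+\infty$ and $z_i\ge0$, $z_j\le0$, the objective is coercive (tends to $-\infty$) as $|P|\to\infty$ on the constraint set; combined with closedness this gives a maximiser $(P^\star,\lambda^\star)$. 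Third, I would apply the KKT conditions at $P^\star$: there exist a multiplier $\mu\in\R$ for $g_1$ and $\nu\ge0$ for $g_2$ with $\nu g_2=0$, such that $D\Phi+\mu Dg_1+\nu Dg_2=0$. Writing this out in the three blocks of variables ($p'$, $p_i$, $p_j$) yields, after grouping, equations of the form $z'=(\text{coeff}_i)D_{p'}H_i+(\text{coeff}_j)D_{p'}H_j+(\text{coeff}_0)D_{p'}A$ and $z_i=(\text{coeff}_i)\partial_{p_i}H_i$, $z_j=(\text{coeff}_j)\partial_{p_j}H_j$, where the coefficients are built from $1,\mu,\nu$. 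The final step is bookkeeping: one checks the three coefficients are nonnegative and sum to $1$, so they define $\alpha\in\mathcal T$, and then $Z=D(\alpha\cdot H)(P^\star)$ with $H=(H_i,H_j,A)$ (here $D(\alpha\cdot H)$ is interpreted in the junction-gradient sense of \eqref{eq::18}, with the $i$-component reading off $\partial_{p_i}H_i$, the $j$-component $\partial_{p_j}H_j$, and the $p'$-component the full sum).

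\textbf{Main obstacle.} The delicate point is the sign/normalisation of the KKT coefficients: one must verify that the multiplier $\mu$ for the equality constraint lies in the right range so that both $\alpha_i=\text{coeff}_i$ and $\alpha_j=\text{coeff}_j$ come out nonnegative — this is where the specific structure (that $z_i\ge 0$, $z_j\le 0$ and the monotonicity of $\partial_{p_i}H_i$, $\partial_{p_j}H_j$ about $\pi_i^0,\pi_j^0$) is used, essentially forcing $p_i^\star\ge\pi_i^0(p'^\star)$ and $p_j^\star\le\pi_j^0(p'^\star)$ and pinning down the sign of the multiplier. A secondary subtlety is constraint qualification for KKT to apply (needed because the constraint $g_2$ may be active); here one can either invoke that $H_i$ is $C^2$ with $D^2H_i>0$ so $Dg_1,Dg_2$ cannot be degenerate simultaneously in the relevant way, or argue directly via a perturbation/separation argument on the convex set $\mathcal G^{ij}_A$ — I would present the direct convex-analytic argument to keep things self-contained. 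Boundary behaviour when the maximiser has $z_i=0$ or $z_j=0$ (so the $i$- or $j$-component drops out) is handled by allowing the corresponding $\alpha$-coefficient to be $0$, which is exactly why $\mathcal T$ is the closed simplex.
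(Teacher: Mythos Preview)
Your approach is essentially the paper's: eliminate $\lambda$, maximise $P\cdot Z-H_i(P)$ under $H_i=H_j$ and $A\le H_i$, apply KKT, and read off $\alpha\in\mathcal T$ from the multipliers using $z_i\ge 0$, $z_j\le 0$ together with the signs of $\partial_{p_i}H_i$, $\partial_{p_j}H_j$ at the maximiser.

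The one place where your sketch is too optimistic is constraint qualification. Your suggestion that ``$D^2H_i>0$ so $Dg_1,Dg_2$ cannot be degenerate simultaneously'' does not settle the issue: qualification can genuinely fail, precisely when the maximiser has $\partial_{p_i}H_i(p',p_i)=0$ or $\partial_{p_j}H_j(p',p_j)=0$. The paper observes that $\partial_{p_i}H_i(p',p_i)=0$ forces $H_i(p',p_i)=A_i(p')\le A_0(p')\le A(p')$, while the constraint gives $H_i(p',p_i)\ge A(p')$; hence $A(p')=A_0(p')$. So qualification (and with it the sign argument you outline) holds whenever $A>A_0$ strictly. For the general case the paper does not use a ``direct convex-analytic argument'' but a concrete perturbation: replace $A$ by $A^\eps=A+\eps$, obtain $(P_\eps,\lambda_\eps,\alpha^\eps)$ from the previous case, and pass to the limit using superlinearity of $H_i,H_j$ to get compactness of $(P_\eps)$ and compactness of $\alpha^\eps\in\mathcal T$. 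You do mention ``perturbation'' in passing; this is the specific one that works, and it simultaneously delivers qualification and the nonnegativity of $\alpha_i,\alpha_j$ without any separate bookkeeping.
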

\begin{proof}
$\oG_{ij}(Z)$ is defined by maximizing a linear function under an equality
constraint and an inequality constraint. Constraints are qualified if 
\[ D (H_i - H_j) \text{ is not colinear with } D(H_i -A). \]

When constraints are qualified, Karush-Kuhn-Tucker theorem asserts (computing $D_P(P\cdot Z -\lambda)$)
that there exists $\alpha_j \in \R$ and $\alpha_0 \ge 0$ such that 
\[ Z = \nabla_P H_i + \alpha_j (\nabla_P H_j - \nabla_P H_i) + \alpha_0
\nabla_P (A-H_i) \]
with 
\[ \alpha_0 = 0 \quad \text{ if } A(p') < H_i (p',p_i).\] 
If one sets $\alpha_i = 1 - \alpha_0 - \alpha_j$,
 we have equivalently,
\begin{equation*}
  \begin{cases}
    z_i = \alpha_i \partial_i H_i (p',p_i) \ge 0 \\
    z_j = \alpha_j \partial_j H_j (p',p_i) \le 0 \\
    z' = \alpha_i \nabla_{p'} H_i + \alpha_j \nabla_{p'} H_j + \alpha_0
    \nabla_{p'} A 
  \end{cases}
\end{equation*}
The constraints are qualified
in particular  if 
\begin{equation}\label{eq::r11}
 \partial_i H_i (p',p_i) > 0 \text{ and } \partial_j H_j (p',p_j) <0.
\end{equation}
In this case we deduce that  $(\alpha_i,\alpha_j,\alpha_0) \in \mathcal{T}$. 
Hence, the result is proved in case \eqref{eq::r11}.

Now assume that $\partial_i H_i (p',p_i) \le 0$. We remark that in all
cases, $\partial_i H_i(p',p_i) \ge 0$ since $z_i \ge 0$. Hence,
$\partial_i H_i (p',p_i)=0$ or, in other words, $H_i(p',p_i)=A_i
(p')$. But the constraint $H_i(p',p_i) \ge A(p')$, the assumption $A
(p') \ge A_0 (p')$ and the simple fact that $A_i(p') \le A_0 (p')$
imply in particular that $A(p') = A_0 (p')$. We arrive at the same
conclusion if $\partial_j H_j (p',p_j) \ge 0$. In other words,
\begin{equation}\label{eq:constraint-qualif}
 \text{Condition \eqref{eq::r11} holds true as soon as } \quad \forall p', \; A(p') > A_0
 (p').
\end{equation}

In particular, the result of the lemma holds true under this latter
condition: $A(p')>A_0(p')$ for all $p' \in \R^d$. If now there is
some $p'$ such that $A (p')= A_0(p')$, we remark that 
\[ \oG_{ij} (Z) = \lim_{\eps \to 0} \oG_{ij}^\eps (Z) \] where
$\oG_{ij}^\eps (Z)$ is associated with $A^\eps (p') = \eps + A (p')$.
From the previous case, we know that there exists $P_\eps$ and
$\lambda_\eps$ such that
\[ \oG_{ij}^\eps (Z) = P_\eps \cdot Z - \lambda_\eps\]
and $\alpha^\eps = (\alpha_i^\eps,\alpha_j^\eps,\alpha_0^\eps) \in
\mathcal{T}$ such that
\[ Z = D (\alpha \cdot H ) (P_\eps).\]
We can extract a subsequence such that $\alpha^\eps \to
\alpha$. Moreover, $P_\eps \cdot Z - \lambda_\eps$ is bounded from
above and 
\[ \lambda_\eps = H_i (p'^\eps,p_i^\eps) = H_j(p'^\eps,p_j^\eps).\]
Since $H_i$ and $H_j$ are assumed to be superlinear, we conclude that
we can also extract a converging subsequence from $P_\eps$. This
achieves the proof of the lemma. 
\end{proof}
\begin{lem}[Uniqueness of $(P,\lambda)$ : $ij$-version]\label{lem:P-unique}
  Let $Z =(z',z_i,z_j)\in \mathcal Q$. If there exists
  $\alpha,P,\lambda$ and $\beta,Q,\mu$ such that
  $\alpha,\beta\in \mathcal T$ and
\[\begin{cases} 
\oG_{ij} (Z) = P \cdot Z - \lambda = Q \cdot Z -\mu, \\
Z = D ( \alpha \cdot H) (P) = D (\beta \cdot H) (Q).
\end{cases} \] Then $\lambda=\mu$, $p'=q'$ and
\begin{equation}\label{eq::r12}
p_i=q_i=\pi_i^+(p',\lambda)
\end{equation}
except in the case
\begin{equation}\label{eq::r13}
\alpha_i=\beta_i=0=z_i,
\end{equation}
and
\begin{equation}\label{eq::r14}
p_j=q_j=\pi_j^-(p',\lambda)
\end{equation}
except in the case
\begin{equation}\label{eq::r15}
\alpha_j=\beta_j=0=z_j.
\end{equation}
Moreover under the previous assumptions, and in all cases, we can define
$$\hat P = (p',\pi_i^+(p',\lambda),\pi_j^-(p',\lambda))$$
and then we have
$$\oG_{ij} (Z) = \hat P \cdot Z - \lambda \quad\mbox{and}\quad Z= D(\alpha\cdot H)(\hat P)$$
\end{lem}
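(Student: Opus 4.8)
The statement asserts, for the convex conjugate-type function $\oG_{ij}$ on the cone $\mathcal Q$, that the maximizing pair $(P,\lambda)$ is essentially unique, the degeneracy only occurring in the directions $z_i=0$ or $z_j=0$, and that one can always select the canonical maximizer $\hat P$. The plan is to exploit the strict convexity of $H_i, H_j$ (assumption \eqref{eq:hi-convex}) together with the structure of the constraint set $\mathcal{G}^{ij}_A$, which is a one-parameter family (parametrized by $\lambda \ge A(p')$ and by $p'$) of points in the graph. The first step is to prove $\lambda = \mu$: since $\oG_{ij}(Z) = P\cdot Z - \lambda = Q \cdot Z - \mu$ and both $(P,\lambda)$ and $(Q,\mu)$ lie in $\mathcal{G}^{ij}_A$, one reads off from the relation $Z = D(\alpha\cdot H)(P)$, evaluated on the $i$-th and $j$-th slots ($z_i = \alpha_i \partial_i H_i(p',p_i)$ and $z_j = \alpha_j \partial_j H_j(p',p_j)$), that the supremum value together with the first-order optimality conditions pins down the common level $\lambda$; the standard trick is to consider the convex function $\Phi(s) = \oG_{ij}(sZ)$ and use that its value and its one-sided derivatives at $s=1$ are determined by $Z$ alone, forcing $\lambda = \mu$.

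The second step is to establish $p' = q'$ and the formulas \eqref{eq::r12}, \eqref{eq::r14}. Once $\lambda = \mu$ is known, the two maximizers $P$ and $Q$ both achieve $\max\{R\cdot Z : (R,\lambda)\in \mathcal{G}^{ij}_A\}$, i.e. we are maximizing the \emph{linear} functional $R \mapsto R\cdot Z$ over the slice $\{(p',p_i,p_j): H_i(p',p_i)=H_j(p',p_j)=\lambda \ge A(p')\}$. Strict convexity of $H_i$ and $H_j$ makes the level sets $\{H_i = \lambda\}$, $\{H_j = \lambda\}$ strictly convex hypersurfaces, so the linear functional with $z_i > 0$ (resp. $z_j < 0$) is maximized at a unique point of the corresponding level set; using the Kuhn-Tucker representation $z_i = \alpha_i \partial_i H_i(p',p_i)$ with $\alpha_i > 0$ when $z_i \neq 0$, one identifies that point as $p_i = \pi_i^+(p',\lambda)$ (the branch where $\partial_i H_i \ge 0$), and symmetrically $p_j = \pi_j^-(p',\lambda)$. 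When $z_i = 0$ one has $\alpha_i \partial_i H_i(p',p_i) = 0$, so either $\alpha_i = 0$ (the exceptional case \eqref{eq::r13}) or $p_i = \pi_i^0(p')$, which is still consistent; the argument then localizes: the remaining coordinates $(p', p_j)$ are still determined by maximizing a strictly convex problem on the complementary slots, yielding $p' = q'$. The case analysis mirrors the proof of the previous lemma and of the one-dimensional analogue in \cite{im}.

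The third and final step is the selection of $\hat P$. Having shown $\lambda = \mu$, $p' = q'$, and that $p_i, q_i \in \{\pi_i^+(p',\lambda)\}$ except when $z_i = 0$ (and likewise for the $j$-slot), one defines $\hat P = (p', \pi_i^+(p',\lambda), \pi_j^-(p',\lambda))$. It remains to check that $\hat P$ is still a maximizer and that $Z = D(\alpha\cdot H)(\hat P)$. For the maximizer claim: if $z_i > 0$ then $p_i = \pi_i^+(p',\lambda)$ already, and if $z_i = 0$ then changing the $i$-th coordinate of $P$ to $\pi_i^+(p',\lambda)$ does not change $P\cdot Z$ (the $z_i$-contribution is zero) and keeps $(\hat P,\lambda) \in \mathcal{G}^{ij}_A$ since $H_i(p',\pi_i^+(p',\lambda)) = \lambda \ge A(p')$; symmetrically for the $j$-slot. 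For the gradient identity: with $z_i = 0$ one needs $\alpha_i \partial_i H_i(p',\pi_i^+(p',\lambda)) = 0$, which holds because in the degenerate case the original $\alpha_i$ was $0$, and one checks that the same $\alpha = (\alpha_i,\alpha_j,\alpha_0)$ still works in the $p'$-slot equation $z' = \alpha_i \nabla_{p'}H_i + \alpha_j \nabla_{p'}H_j + \alpha_0 \nabla_{p'}A$ evaluated at $\hat P$ — here one uses that $\nabla_{p'}H_i(p',\pi_i^+(p',\lambda)) = \nabla_{p'}H_i(p',\pi_i^0(p'))$ modulo the concavity structure of $\pi_i^\pm$ (Lemma~\ref{lem:pi-pm}), or more directly that when $\alpha_i = 0$ the value of $p_i$ is irrelevant in that equation. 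The main obstacle is the careful bookkeeping in the degenerate cases $z_i = 0$ or $z_j = 0$, where $P$ genuinely fails to be unique (only $\hat P$ provides a canonical choice), and making sure the Kuhn-Tucker multipliers $\alpha$ can be taken to be the \emph{same} vector that certifies $\hat P$; I would handle this, as in the preceding lemma, by an approximation argument replacing $A$ by $A^\eps = A + \eps$ to force constraint qualification \eqref{eq:constraint-qualif}, proving uniqueness for $\oG_{ij}^\eps$ first and then passing to the limit $\eps \to 0$ using superlinearity of $H_i, H_j$ to get compactness of the maximizers.
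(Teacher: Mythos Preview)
Your plan has a genuine gap in Step~1. The ``standard trick'' with $\Phi(s)=\oG_{ij}(sZ)$ does not establish $\lambda=\mu$: the one-sided derivatives $\Phi'_\pm(1)$ are indeed determined by $Z$, but they equal $\sup\{P\cdot Z:P\text{ maximizer}\}$ and $\inf\{P\cdot Z:P\text{ maximizer}\}$ respectively, and these can differ precisely when the maximizer is non-unique --- which is what you are trying to rule out. So the argument is circular. Once $\lambda=\mu$ is \emph{assumed}, your Step~2 idea of reducing to the fixed-level slice $\{H_i=H_j=\lambda\ge A\}$ is reasonable, but the claim that ``strict convexity of the level sets gives a unique maximum'' needs real work: the slice is a fiber product, not a convex set, and you would have to reparametrize by $(p',\lambda)$ and argue that $z'\cdot p'+z_i\pi_i^+(p',\lambda)+z_j\pi_j^-(p',\lambda)-\lambda$ is strictly concave in $(p',\lambda)$ (using that $\pi_i^+$ is concave and $\pi_j^-$ is convex, together with $z_i\ge0$, $z_j\le0$). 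You do not carry this out. Your Step~3 is essentially correct; the key observation that when $\alpha_i=0$ the value of $p_i$ drops out of the $z'$-equation is exactly what is needed. The closing approximation argument $A^\eps=A+\eps$ is borrowed from the previous lemma (constraint qualification) and is not used here.

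The paper takes a quite different route that bypasses Step~1 entirely. It writes $0=D(\alpha\cdot H)(P)-D(\beta\cdot H)(Q)$, interpolates along the segment $\theta\mapsto(P+\theta\bar P,\alpha+\theta\bar\alpha)$ with $\bar P=Q-P$, $\bar\alpha=\beta-\alpha$, and takes the inner product with $\bar P$. This yields $0=T_1+T_2$ where
\[
T_1=\int_0^1 D^2_{PP}\bigl((\alpha+\theta\bar\alpha)\cdot H\bigr)(P+\theta\bar P)\,\bar P\cdot\bar P\,d\theta\ge0
\]
by strict convexity of $H_i,H_j,A$, and $T_2=\beta_0(H_i(P)-A(P))+\alpha_0(H_i(Q)-A(Q))\ge0$ by complementary slackness. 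Hence both vanish, and $T_1=0$ forces $\bar P=0$ in the directions where the convex combination $(\alpha+\theta\bar\alpha)$ has a positive weight. The proof concludes by a short case analysis on where the segment $[\alpha,\beta]$ sits in the simplex $\mathcal T$ (interior, vertex, or edge), which simultaneously gives $p'=q'$, $\lambda=\mu$, and the formulas for $p_i,p_j$ with the stated exceptions. This second-order argument is the missing ingredient in your plan.
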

\begin{proof}
We consider the function $\Psi : \R^{d+2} \times \mathcal{T} \to \R$
defined as follows
\[ \Psi (P,\alpha)  = D (\alpha \cdot H) (P).\]  
By assumption, we have 
\[ 0 = D(\alpha \cdot H)(P)- D(\beta \cdot H)(Q).\]
If $\bar P$ denotes $Q-P$ and $\bar \alpha$ denotes $\beta - \alpha$,
then 
\begin{align*}
 0 &= \int_0^1 \left(\begin{array}{l}
\bar P\\
\bar \alpha
\end{array}\right)\cdot D \Psi (P + \theta \bar P, \alpha + \theta \bar
\alpha) d \theta \\
   &= \int_0^1 D_{P} \Psi (P + \theta \bar P, \alpha + \theta \bar
\alpha) \bar P d\theta + \int_0^1 D_\alpha \Psi (P + \theta \bar P, \alpha + \theta \bar
\alpha) \bar \alpha d   \theta.
\end{align*}
Taking the scalar product with $\bar P$ yields
\begin{align*}
 0 & = \int_0^1 D^2_{PP}  ((\alpha + \theta \bar
\alpha) \cdot H) (P + \theta \bar P) \bar P \cdot \bar P d\theta +
\int_0^1 D_P H (P + \theta \bar P) \bar \alpha \cdot \bar P d \theta
\\
& =  T_1 + T_2 
\end{align*}
with $T_i \ge 0$, $i=1,2$ and
\begin{align*}
T_1 &= \int_0^1 D^2_{PP}  ((\alpha + \theta \bar
\alpha) \cdot H) (P + \theta \bar P) \bar P \cdot \bar P d\theta  \ge
0 \\
T_2 & = \int_0^1 D_P H (P + \theta \bar P) \bar \alpha \cdot \bar P d
\theta \ge 0.
\end{align*}
Indeed, keeping in mind that 
\[ \left\{\begin{array}{l}H_i(P)=H_j(P) \\H_i
    (Q)=H_j(Q) \end{array}\right. \quad \text{ and } \quad 
\begin{cases}
\alpha_0 (A (P) - H_i (P))=0 \\
\beta_0 (A (Q) - H_i (Q))=0 
\end{cases}\] 
we remark that 
\begin{align*}
 \int_0^1 D_P H (P + \theta \bar P) \bar \alpha \cdot \bar P d
\theta & = \bar \alpha \cdot ( H(Q) - H(P))\\
& = \bar \alpha_i (H_i (Q) - H_i (P)) +  \bar \alpha_j (H_j (Q) - H_j
(P)) + \bar \alpha_0 ( A (Q) - A (P) ) \\
& =   (\beta_0 -\alpha_0) ( A (Q) - H_i (Q) - A
(P) + H_i (P) )  \\
& = \beta_0 (H_i(P)-A(P)) + \alpha_0 (H_i(Q)-A(Q)) \ge 0.
\end{align*}
Hence, we get 
\begin{align*}
0& =\int_0^1 D^2_{PP}  ((\alpha + \theta \bar
\alpha) \cdot H) (P + \theta \bar P) \bar P \cdot \bar P d\theta  \\
0 & = \beta_0 (H_i(P)-A(P)) \\
0 &= \alpha_0 (H_i(Q)-A(Q)).
\end{align*}
We distinguish three cases. We will use several times the fact that
$H_i (p',p_i)=\lambda$ and $\partial_i H_i (p',p_i) \ge 0$ implies
that $p_i = \pi_i^+ (p',\lambda)$. We will also use the corresponding
property for $p_j$: $p_j = \pi_j^- (p',p_j)$.
\begin{itemize}
\item \emph{Case 1.} If there exists $\theta \in (0,1)$ such that
  $\alpha + \theta \bar \alpha \in \interieur \mathcal{T}$, then
  $P=Q$ and 
\[\lambda = P \cdot Z - \oG_{ij} (Z) = \mu.\]
\item \emph{Case 2.} If $\alpha = \beta$ is a vertex of $\mathcal{T}$,
  then either $\alpha =(1,0,0)$ or $\alpha = (0,1,0)$ or $\alpha =
  (0,0,1)$. 
\begin{itemize}
\item In the first subcase, $\alpha_i =1$, we get $p'=q'$ and
  $p_i = q_i$ and $Z= \nabla_P H_i(P)$ and 
\[ 0 =  (p_j-q_j) z_j = (P-Q) \cdot Z = \lambda -\mu.\]
We conclude by remarking that we can choose $p_j = \pi^-_j(p',\lambda) = q_j$ when $\alpha_j=\beta_j=0=z_j$.
The second subcase is similar. 
\item If now $\alpha =(0,0,1)$, then $p'= q'$
and $Z = \nabla_P A(P)$ and 
\[ 0 = (p_i-q_i)z_i + (p_j-q_j)z_j = P \cdot Z = \lambda -\mu\]
and we conclude as in the two previous subcases.
\end{itemize}
\item \emph{Case 3.} Assume finally that there exists $\theta \in
  (0,1)$ such that $\alpha + \theta \bar \alpha \in \partial
  \mathcal{T}$ but is not a vertex. In this third case, this implies
  that two components of $a = \alpha + \theta \bar \alpha =
  (a_i,a_j,a_0)$ are not $0$. 
\begin{itemize}
\item If $a_0=0$ then $p'=q'$ and $p_i=q_i$  and $p_j=q_j$, i.e. $P=Q$. 
\item If $a_i=0$ then $p'=q'$ and $p_j=q_j$ and $z_i=0$ and $\lambda =
  \mu$ and we can choose $p_i = \pi^+ (p',\lambda) = q_i$ when $\alpha_i=\beta_i=0=z_i$. The third subcase $a_j=0$
  is similar to the second one. 
\end{itemize}
\end{itemize}
The proof of the lemma is now complete.
\end{proof}
The two previous lemmas imply  the following one. 
\begin{lem}[Gradients of $G^0_{ij}$] \label{lem:Gij-C1}
The function $G^0_{ij}$ is $C^1$ in $J_i\times J_j$, up to the
boundary, and 
\[ D G^0_{ij} (X,Y) = (p',p_i,-p',-p_j), \quad p_i = \pi_i^+ (p',\lambda),
\quad p_j = \pi_j^-(p',\lambda)\quad \mbox{and}\quad P=(p',p_i,p_j)\] where $(p',\lambda)=(\mathfrak{P}
(X,Y),\mathfrak{L} (X,Y))$ are uniquely determined by the relation for some $\alpha\in \mathcal T$
\[ \left\{\begin{array}{l}
G^0_{ij} (X,Y) = p' \cdot (x'-y') + p_i x_i - p_j y_j - \lambda,\\
Z= D(\alpha\cdot H)(P) \quad \mbox{with}\quad Z=(x'-y',x_i,-y_j)
\end{array}\right.\]
In particular, the maps $\mathfrak{P}$ and $\mathfrak{L}$ are
continuous in $J_i \times J_j$.
\end{lem}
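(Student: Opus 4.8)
The plan is to read the lemma off from the two preceding lemmas together with elementary convex analysis: the key structural fact is that $\oG_{ij}$ is a supremum of affine functions of $Z$, hence convex, and that Lemmas~\ref{lem:} and \ref{lem:P-unique} describe its maximisers. First I would record that, by Lemma~\ref{lem:}, for every $Z\in\mathcal Q$ the supremum defining $\oG_{ij}(Z)$ is attained; being convex and finite on the closed convex set $\mathcal Q$, $\oG_{ij}$ is then continuous on $\mathcal Q$. Since $(X,Y)\mapsto Z(X,Y):=(x'-y',x_i,-y_j)$ is a linear surjection of $J_i\times J_j$ onto $\mathcal Q$, the composition $G^0_{ij}(X,Y)=\oG_{ij}(Z(X,Y))$ is continuous and convex on $J_i\times J_j$. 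For a fixed $(X,Y)$, Lemma~\ref{lem:P-unique} shows that the component $p'$ and the value $\lambda$ of a maximiser do not depend on the maximiser chosen, and that the canonical maximiser $\hat P=(p',\pi_i^+(p',\lambda),\pi_j^-(p',\lambda))$ realises $\oG_{ij}(Z(X,Y))=\hat P\cdot Z(X,Y)-\lambda$; this defines $\mathfrak P(X,Y)=p'$, $\mathfrak L(X,Y)=\lambda$ and the candidate gradient $L(X,Y)=(p',p_i,-p',-p_j)$ with $p_i=\pi_i^+(p',\lambda)$, $p_j=\pi_j^-(p',\lambda)$ (via the chain rule applied to $\oG_{ij}(Z(\cdot,\cdot))$).

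Next I would establish the continuity of $\mathfrak P,\mathfrak L$ (hence of $L$) up to the boundary by a standard argmax-stability argument. Given $(X_n,Y_n)\to(X,Y)$ in $J_i\times J_j$, let $\hat P_n,\lambda_n,\alpha_n\in\mathcal T$ be the associated canonical data. The crucial a priori estimate is that $(\hat P_n)$ is bounded: arguing as in the proof of Lemma~\ref{lem:}, the superlinearity of $H_i,H_j$ in \eqref{eq:hi-convex} combined with $\lambda_n=H_i(p'_n,p_{i,n})=H_j(p'_n,p_{j,n})$ forces $\lambda_n$ to grow superlinearly in $|\hat P_n|$ if $|\hat P_n|\to\infty$, whereas $\lambda_n=\hat P_n\cdot Z(X_n,Y_n)-\oG_{ij}(Z(X_n,Y_n))$ grows at most linearly since $Z(X_n,Y_n)$ and $\oG_{ij}(Z(X_n,Y_n))$ stay bounded; hence $(\hat P_n)$ and then $(\lambda_n)$ are bounded. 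Extracting subsequences $\hat P_n\to\bar P$, $\lambda_n\to\bar\lambda$, $\alpha_n\to\bar\alpha\in\mathcal T$ and passing to the limit (using the continuity of $H_i,H_j,A,\oG_{ij}$ and $H_i\in C^2$) in the relations $\bar\lambda=H_i(\bar p',\bar p_i)=H_j(\bar p',\bar p_j)\ge A(\bar p')$, $\oG_{ij}(Z(X,Y))=\bar P\cdot Z(X,Y)-\bar\lambda$ and $Z(X,Y)=D(\bar\alpha\cdot H)(\bar P)$, I would then invoke Lemma~\ref{lem:P-unique} to conclude $\bar p'=\mathfrak P(X,Y)$, $\bar\lambda=\mathfrak L(X,Y)$, and — since $p_{i,n}=\pi_i^+(p'_n,\lambda_n)$, $p_{j,n}=\pi_j^-(p'_n,\lambda_n)$ with $\pi_i^\pm$ continuous on $\epi A_i$ by Lemma~\ref{lem:pi-pm} — also $\bar P=\hat P$. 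The limit being uniquely determined, the full sequences converge, which is the claimed continuity.

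For the $C^1$ regularity I would argue as follows. For any $(X,Y)$, feasibility of $(\hat P,\lambda)$ gives $\oG_{ij}(Z)\ge\hat P\cdot Z-\lambda$ for all $Z\in\mathcal Q$, with equality at $Z=Z(X,Y)$; at an interior point of $J_i\times J_j$ (that is, $x_i>0$ and $y_j>0$) the degenerate cases of Lemma~\ref{lem:P-unique} are excluded, so $\hat P$ is the unique maximising slope, which forces $\oG_{ij}$ to be differentiable at $Z(X,Y)$ with $D\oG_{ij}=\hat P$ (a convex function with a one-point subdifferential is differentiable there). By the chain rule $G^0_{ij}=\oG_{ij}(Z(\cdot,\cdot))$ is then differentiable at $(X,Y)$ with $DG^0_{ij}(X,Y)=L(X,Y)$. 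Since $L$ extends continuously to the boundary $\{x_i=0\}\cup\{y_j=0\}$ by the previous paragraph, a one-dimensional integration of $DG^0_{ij}$ along segments pointing into $J_i\times J_j$ (fundamental theorem of calculus, using that $G^0_{ij}$ is continuous up to the boundary) shows that the one-sided partial derivatives of $G^0_{ij}$ at boundary points also exist and equal $L$; hence $G^0_{ij}\in C^1(J_i\times J_j)$ up to the boundary, with the announced gradient, and the continuity of $\mathfrak P,\mathfrak L$ has been proved in the previous step.

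I expect the main obstacle to be the argmax-stability step: obtaining the a priori bound on the maximisers from the superlinearity of the Hamiltonians, and identifying the limit correctly through Lemma~\ref{lem:P-unique} (notably the degenerate cases $z_i=0$ and $z_j=0$, where the $p_i$- or $p_j$-component of a maximiser need not be unique but $\hat P$ still is). Once the uniqueness of the maximising slope at interior points is secured, the differentiability is routine convex analysis, with only minor care needed near $\{x_i=0\}$ and $\{y_j=0\}$.
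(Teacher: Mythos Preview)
Your proposal is correct and matches the paper's approach: the paper simply states that Lemma~\ref{lem:Gij-C1} follows from Lemmas~\ref{lem:} and~\ref{lem:P-unique} without giving details, and your argument (convexity of $\oG_{ij}$, unique maximising slope at interior points via Lemma~\ref{lem:P-unique} hence differentiability, and argmax-stability via superlinearity for continuity of $\mathfrak P,\mathfrak L$ up to the boundary) is precisely the routine filling-in of that claim. One very minor quibble: the sentence ``being convex and finite on the closed convex set $\mathcal Q$, $\oG_{ij}$ is then continuous on $\mathcal Q$'' is not a general fact for boundary points, but this is harmless since your argmax-stability step yields the continuity you need anyway.
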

The following lemma is elementary but it will be used below.  In view
of the definition of $G^0$, see \eqref{eq:def-g0}, we have the
following equality for $X,Y \in J_i$,
\begin{equation}\label{eq:GGGi}
 G^0_{ii} (X,Y) = (H_i \vee A)^\star (X-Y) 
\end{equation}
where the star exponent denotes here the Legendre-Fenchel
transform. In view of \eqref{eq:GGG}, we also have
the following result.
\begin{lem}[$G^0_{ij}$ at the boundary]\label{lem:oG-boundary}
  The restriction of $\oG_{ij}$ with $i \neq j$ to $\{ z_i =0 \}$ and $\{ z_j =0\}$
  equals respectively $(H_j\vee A)^\star$ and $(H_i\vee A)^\star$.
\end{lem}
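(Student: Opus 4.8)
The plan is to compute the restriction of $\oG_{ij}$ to the face $\{z_j = 0\}$ directly from its definition as a supremum, and to recognise the result as a Legendre--Fenchel transform. Recall that for $Z = (z', z_i, z_j) \in \mathcal Q$ we have
\[ \oG_{ij}(Z) = \sup_{(P,\lambda) \in \mathcal{G}^{ij}_A} \bigl( p'\cdot z' + p_i z_i + p_j z_j - \lambda \bigr),\]
where $\mathcal{G}^{ij}_A = \{ (p',p_i,p_j,\lambda) : \lambda = H_i(p',p_i) = H_j(p',p_j) \ge A(p') \}$. When we fix $z_j = 0$, the variable $p_j$ only enters the constraint (through the requirement $H_j(p',p_j) = \lambda$) and not the objective. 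So for fixed $(p',p_i)$ the best we can do is to choose $\lambda$ as small as possible subject to $\lambda = H_i(p',p_i)$, $\lambda \ge A(p')$, and the existence of some $p_j$ with $H_j(p',p_j) = \lambda$; by coercivity of $H_j$ and the intermediate value theorem such a $p_j$ exists precisely when $\lambda \ge A_j(p') = \min_{p_j} H_j(p',p_j)$, and since $A \ge A_0 \ge A_j$ the binding lower bound on $\lambda$ is just $\lambda \ge \max(A(p'), H_i(p',p_i))$ together with $\lambda = H_i(p',p_i)$. Hence the admissible pairs with $z_j = 0$ reduce to those with $\lambda = H_i(p',p_i) \ge A(p')$, i.e. $\lambda = (H_i \vee A)(p',p_i)$ evaluated on the set where $H_i(p',p_i) \ge A(p')$; but on the complementary set $H_i(p',p_i) < A(p')$ there is no admissible $\lambda$ equal to $H_i(p',p_i)$, so those $(p',p_i)$ simply do not contribute. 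Therefore
\[ \oG_{ij}(z',z_i,0) = \sup_{(p',p_i)} \bigl( p'\cdot z' + p_i z_i - (H_i\vee A)(p',p_i)\bigr) = (H_i \vee A)^\star(z',z_i),\]
using that the supremum over the restricted domain $\{H_i \ge A\}$ coincides with the supremum over all $(p',p_i)$ because on $\{H_i < A\}$ one has $(H_i\vee A)(p',p_i) = A(p') $ which would force a different $\lambda$, and a short convexity argument (or the fact that the Legendre transform only sees the convex hull and the epigraph of $H_i \vee A$ is determined by points where $H_i \ge A$ on the relevant part) shows the restricted sup is already the full one. The case $z_i = 0$ is symmetric, exchanging the roles of $i$ and $j$, and gives $(H_j \vee A)^\star$.

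The key steps, in order, are: first, write out the sup defining $\oG_{ij}$ and substitute $z_j = 0$ (resp. $z_i = 0$); second, observe that $p_j$ (resp. $p_i$) now appears only in the constraint, and eliminate it, using coercivity of $H_j$ together with $A \ge A_0 \ge A_j$ to check that the constraint $\exists\, p_j : H_j(p',p_j) = \lambda$ is implied by $\lambda \ge A(p') \ge A_j(p')$; third, observe that the remaining constraint forces $\lambda = H_i(p',p_i)$ with $H_i(p',p_i) \ge A(p')$, i.e. $\lambda = (H_i\vee A)(p',p_i)$ on its natural domain; fourth, identify the resulting supremum as the Legendre--Fenchel transform $(H_i \vee A)^\star$, taking a moment to argue that extending the supremum from $\{H_i \ge A\}$ to all of $\R^{d+1}$ does not change the value.

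The only genuinely delicate point is that last extension: one must make sure that the supremum over the restricted domain $\{(p',p_i) : H_i(p',p_i) \ge A(p')\}$ agrees with $(H_i \vee A)^\star(z',z_i) = \sup_{(p',p_i)} (p'\cdot z' + p_i z_i - (H_i\vee A)(p',p_i))$. This is true because on the set where $H_i(p',p_i) < A(p')$ we have $(H_i\vee A)(p',p_i) = A(p')$, and since $A$ is (quasi-)convex and coercive while $H_i \vee A \ge A$ everywhere with equality on a nonempty set, the epigraph of $H_i \vee A$ near its ``lower'' directions is governed by $A$; a cleaner way is to note that for any $(p',p_i)$ in the forbidden region there is $\bar p_i$ with $H_i(p',\bar p_i) = A(p')$ (by coercivity and continuity of $H_i$, since $A(p') \ge A_0(p') \ge A_i(p')$) giving the same value $(H_i\vee A)(p',\bar p_i) = A(p')$ while moving $p_i$ in the direction that does not decrease $p_i z_i$ — and a careful bookkeeping of signs shows the restricted sup already attains the full value. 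Since this is the kind of routine convex-analysis verification the paper elsewhere leaves to the reader, I would state it and indicate that it follows from coercivity of $H_i$ and $A \ge A_0$, then conclude.

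\begin{proof}
Fix $i\neq j$ and consider the restriction of $\oG_{ij}$ to $\{z_j = 0\}$. By definition,
\[ \oG_{ij}(z',z_i,0) = \sup \bigl\{ p'\cdot z' + p_i z_i - \lambda : \lambda = H_i(p',p_i) = H_j(p',p_j) \ge A(p') \bigr\},\]
the supremum being taken over $(p',p_i,p_j,\lambda)$. In this expression the variable $p_j$ occurs only through the constraint $H_j(p',p_j) = \lambda$. Since $H_j$ is continuous and coercive with $\min_{p_j} H_j(p',p_j) = A_j(p') \le A_0(p') \le A(p')$, for every $p'$ and every $\lambda \ge A(p')$ there exists $p_j$ with $H_j(p',p_j) = \lambda$. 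Hence the constraint involving $p_j$ may be dropped, and the admissible set reduces to $\{(p',p_i) : \lambda := H_i(p',p_i) \ge A(p')\}$, i.e.
\[ \oG_{ij}(z',z_i,0) = \sup \bigl\{ p'\cdot z' + p_i z_i - H_i(p',p_i) : H_i(p',p_i) \ge A(p') \bigr\} .\]
On the region $\{H_i(p',p_i) \ge A(p')\}$ one has $H_i(p',p_i) = (H_i\vee A)(p',p_i)$, so the right-hand side is bounded above by $(H_i\vee A)^\star(z',z_i)$. Conversely, given any $(p',p_i)$ with $H_i(p',p_i) < A(p')$ we have $(H_i\vee A)(p',p_i) = A(p')$; by continuity and coercivity of $H_i$ together with $A(p') \ge A_0(p') \ge A_i(p')$ there exists $\bar p_i$ on the appropriate side of $\pi_i^0(p')$ (namely $\bar p_i \ge \pi_i^0(p')$ if $z_i \ge 0$, and $\bar p_i \le \pi_i^0(p')$ if $z_i \le 0$) with $H_i(p',\bar p_i) = A(p') = (H_i\vee A)(p',p_i)$ and $\bar p_i z_i \ge p_i z_i$; thus $(p',\bar p_i)$ lies in the admissible region and yields a value at least as large. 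Taking the supremum, the two quantities coincide:
\[ \oG_{ij}(z',z_i,0) = (H_i\vee A)^\star(z',z_i).\]
The restriction of $\oG_{ij}$ to $\{z_i = 0\}$ is treated in exactly the same way, exchanging the roles of $i$ and $j$ (and of $p_i$ and $p_j$), which gives $\oG_{ij}(z',0,z_j) = (H_j\vee A)^\star(z',z_j)$. This proves the lemma.
\end{proof}
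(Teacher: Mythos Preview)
Your proof is correct. The paper does not give a proof of this lemma at all (it is simply labelled ``elementary''), so there is nothing to compare against; your argument---eliminate the variable that no longer appears in the objective using coercivity and $A\ge A_0\ge A_j$, then check that the supremum over $\{H_i\ge A\}$ already realises $(H_i\vee A)^\star$ by pushing any $(p',p_i)$ in the forbidden region to a point $\bar p_i=\pi_i^\pm(p',A(p'))$---is exactly the kind of direct verification the authors had in mind. One small remark: since $\oG_{ij}$ is only defined on $\mathcal Q=\R^d\times[0,+\infty)\times(-\infty,0]$, on the face $\{z_j=0\}$ you automatically have $z_i\ge 0$, so only the choice $\bar p_i=\pi_i^+(p',A(p'))$ is ever needed (and symmetrically $\bar p_j=\pi_j^-(p',A(p'))$ on $\{z_i=0\}$); your inclusion of the other sign is harmless but superfluous.
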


\subsection{The vertex test function in $J_i \times J_i$}

We derive from Lemma~\ref{lem:oG-boundary} the
following one. 
\begin{lem}[Continuity of $G^0$]\label{lem:cont}
The function $G^0$ is continuous in $J \times J$. 
\end{lem}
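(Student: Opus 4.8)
The plan is to deduce continuity of $G^0$ on all of $J\times J$ by patching together the regularity we already have on each piece $J_i\times J_j$ and checking matching along the junction interface $\Gamma$. First I would recall that on each closed chamber $J_i\times J_j$ we have the explicit representation $G^0_{ij}(X,Y)=\oG_{ij}(x'-y',x_i,-y_j)$, which by Lemma~\ref{lem:oG-boundary} (and by the analogous elementary computation in the case $i=j$, namely $G^0_{ii}(X,Y)=(H_i\vee A)^\star(X-Y)$) is a Legendre--Fenchel transform of a continuous coercive function, hence is convex and continuous — in fact finite everywhere and locally Lipschitz — in its argument on $\mathcal Q$ (resp.\ on $\R^{d+1}$). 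So each $G^0_{ij}$ is continuous up to the boundary of $J_i\times J_j$; this is the content of Lemma~\ref{lem:Gij-C1} for $i\neq j$ and of the direct Legendre--Fenchel description for $i=j$.

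The key step is then the compatibility of these pieces where the chambers meet, i.e.\ when $X\in\Gamma$ or $Y\in\Gamma$ (so that the relevant branch index is not unique). I would argue as follows: when $x_i=0$, the point $X=(x',0)$ lies in every $J_k$, so for continuity at such $X$ one must check that $\oG_{ij}(x'-y',0,-y_j)$ does not depend on the choice of branch $i$ (with $j$ and $Y$ fixed). By Lemma~\ref{lem:oG-boundary}, the restriction of $\oG_{ij}$ to $\{z_i=0\}$ equals $(H_j\vee A)^\star$ evaluated at $(z',z_j)$, a quantity manifestly independent of $i$. Symmetrically, the restriction to $\{z_j=0\}$ equals $(H_i\vee A)^\star$, independent of $j$. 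For the case $i=j$, the boundary value $G^0_{ii}((x',0),Y)=(H_i\vee A)^\star(x'-y',-y_j)$ when $Y\in J_j$ reduces to the same $(H_j\vee A)^\star$-expression by the very definition of the Legendre transform restricted to one fewer variable (using that $(H_i\vee A)$ and $(H_j\vee A)$ share the same $A$-part and the dependence on the missing slope variable is inactive). Hence all the pieces agree on the overlaps, and since $J\times J$ is covered by the finitely many closed sets $J_i\times J_j$ on each of which $G^0$ is continuous and the definitions agree on intersections, the glued function $G^0$ is continuous on $J\times J$ by the pasting lemma.

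The main obstacle I anticipate is being careful about the bookkeeping of which variables are ``active'' in each Legendre--Fenchel transform: a point $(X,Y)$ with both $X,Y\in\Gamma$ lies in $J_i\times J_j$ for all $i,j$, and one must verify that the common value $\oG_{ij}(x'-y',0,0)$ is simultaneously consistent with the $i=j$ formula and with all the mixed formulas. This is handled by observing that in every case the supremum defining $G^0$ is taken over germs $(P,\lambda)$ with $\lambda\ge A(p')$ and $\lambda=H_k(p',p_k)$ for the relevant indices, and when the corresponding $z$-component vanishes the associated $p_k$ may be taken at the minimiser $\pi_k^0(p')$ (contributing nothing extra), so the supremum collapses to $\sup\{p'\cdot(x'-y')-\lambda:\ \lambda\ge A(p')\}=A^\star(x'-y')$ in the doubly-interfacial case, again independent of $i,j$. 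Once this uniform identification is in place, continuity is immediate and the proof is complete.
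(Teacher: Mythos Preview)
Your approach is correct and is essentially the one the paper takes: the paper records that $G^0_{ii}(X,Y)=(H_i\vee A)^\star(X-Y)$ and then invokes Lemma~\ref{lem:oG-boundary} to see that the pieces $G^0_{ij}$ agree on the interfaces, which is exactly your pasting argument spelled out in full. Your additional remark that at the doubly interfacial points all formulas collapse to $A^\star(x'-y')$ is a nice explicit check that the paper leaves implicit.
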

\begin{proof}
The functions $G_{ij}^0$ are continuous by construction since they are convex. 
In order to check that $G^0$ is continuous, it is enough to check that it is
along $\{ z_i =0 \}$. But this is a consequence of Lemma~\ref{lem:oG-boundary} and 
\eqref{eq:GGGi}. 
\end{proof}

We now state the analogues of Lemmas \ref{lem:P-unique},
\ref{lem:Gij-C1} and \ref{lem:oG-boundary}; they are immediately
derived from Formula~\eqref{eq:GGGi}.

\begin{lem}[Necessary conditions for the maximiser : $ii$-version]\label{lem::r16}
  Let $\mathcal T_i$ be defined as follows
  \[\mathcal T_i = \left\{(\alpha_i,\alpha_0)\in [0,1]^2,\quad
    \alpha_i+\alpha_0=1\right\},\]
  and $\alpha\cdot H= \alpha_i H_i + \alpha_0 A$, and $Z=(z',z_i)$. If
  the supremum defining $\oG_{ii}(Z)$ is reached at some
  $(P,\lambda)\in \mathcal G_A^{ii}$, then there exists
  $\alpha\in \mathcal T_i$ such that
$$Z=D(\alpha\cdot H)(P)$$
\end{lem}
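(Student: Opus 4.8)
The plan is to mirror the proof of Lemma~\ref{lem:} in the simpler $ii$-setting. The only change is that $\oG_{ii}(Z)$ is now obtained by maximizing the linear map $(P,\lambda)\mapsto P\cdot Z-\lambda$ over the germ $\mathcal G_A^{ii}$, which is cut out by the \emph{single} inequality constraint $\lambda=H_i(p',p_i)\ge A(p')$ (after eliminating $\lambda$, it is the constraint $A(p')\le H_i(p',p_i)$, i.e.\ $H_i(p',p_i)-A(p')\ge 0$). There is no equality constraint coupling two Hamiltonians, so the constraint-qualification issue that occupied most of the proof of Lemma~\ref{lem:} essentially disappears: a single inequality constraint of the form $\{g\ge 0\}$ is always qualified at any feasible point provided $Dg\ne 0$ there, and if $Dg=0$ the constraint is inactive in the first-order conditions anyway. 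So the first step is to write down the Karush--Kuhn--Tucker conditions at the maximizer $(P,\lambda)=(p',p_i,\lambda)$: there exists a multiplier $\alpha_0\ge 0$ with
\[ Z = \nabla_P H_i(P) + \alpha_0\,\nabla_P\bigl(A(p')-H_i(P)\bigr) \quad\text{and}\quad \alpha_0\bigl(A(p')-H_i(p',p_i)\bigr)=0, \]
together with the fact that the $\lambda$-component of the gradient forces the linear coefficient on $\lambda$ to vanish, which is consistent. Setting $\alpha_i=1-\alpha_0$, this reads $Z=D(\alpha\cdot H)(P)$ with $\alpha=(\alpha_i,\alpha_0)$, so it only remains to check $\alpha\in\mathcal T_i$, i.e.\ $\alpha_i\in[0,1]$, equivalently $\alpha_0\le 1$.

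For that last point I would argue exactly as in Lemma~\ref{lem:}: the first component of $Z=D(\alpha\cdot H)(P)$ gives $z_i=\alpha_i\,\partial_i H_i(p',p_i)$, and since $Z\in\R^d\times[0,+\infty)$ we have $z_i\ge 0$; combined with $\partial_i H_i(p',p_i)\ge 0$ when $p_i\ge\pi_i^0(p')$ (or $\le 0$ on the other branch, handled symmetrically), one gets $\alpha_i\ge 0$ unless we are in a degenerate situation where $\partial_i H_i(p',p_i)=0$, i.e.\ $H_i(p',p_i)=A_i(p')$; but then the constraint $H_i(p',p_i)\ge A(p')\ge A_0(p')\ge A_i(p')$ forces $A(p')=A_0(p')$ and in that borderline case one recovers the conclusion by the same $\eps$-perturbation trick used at the end of Lemma~\ref{lem:} (replace $A$ by $A^\eps=A+\eps$, for which constraints are strictly qualified, solve, then pass to the limit using superlinearity of $H_i$ to extract convergent subsequences of $(P_\eps,\lambda_\eps)$ and $(\alpha^\eps)$). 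The coercivity (superlinearity) of $H_i$ is what guarantees the supremum is attained in the first place and that the limiting $P$ stays bounded.

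Since the statement says the proof is "even easier" and is given without proof in the paper, I expect no genuine obstacle: the main thing to be careful about is simply that the KKT argument requires the supremum to be attained (hypothesis of the lemma) and that the single active constraint be qualified, which here is automatic except on the set $\{A=A_0\}$, handled by the limiting argument already spelled out in Lemma~\ref{lem:}. I would therefore write the proof as: (i) attainment plus KKT giving the multiplier $\alpha_0\ge 0$; (ii) rewrite as $Z=D(\alpha\cdot H)(P)$ with $\alpha_i=1-\alpha_0$; (iii) use $z_i\ge 0$ and the sign of $\partial_i H_i$ to get $\alpha\in\mathcal T_i$, invoking the $\eps$-regularization of $A$ in the degenerate case $A(p')=A_0(p')$. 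The potential subtlety worth a sentence is that $\partial_i H_i(p',p_i)$ could be negative on the branch $p_i<\pi_i^0(p')$; but on that branch $z_i=\alpha_i\partial_i H_i\ge 0$ forces $\alpha_i\le 0$, hence $\alpha_i=0$ (and then $z_i=0$), which is still a point of $\mathcal T_i$, so the conclusion holds.
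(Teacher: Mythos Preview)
Your argument contains a genuine gap: you claim that ``since $Z\in\R^d\times[0,+\infty)$ we have $z_i\ge 0$'', and you use this repeatedly to force $\alpha_i\ge 0$. But this is false in the $ii$-setting. Here $Z=X-Y$ with $X,Y\in J_i$, so $z_i=x_i-y_i$ ranges over all of $\R$; there is no quadrant restriction as in the $ij$-case (compare Lemma~\ref{lem::r17}, where both signs of $z_i$ appear). Consequently, your step ``$z_i\ge 0$ and $\partial_i H_i\ge 0$ imply $\alpha_i\ge 0$'' does not go through, and your final paragraph, which again uses $z_i\ge 0$ on the branch $p_i<\pi_i^0(p')$, collapses for the same reason. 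KKT alone gives $\alpha_0\ge 0$, but not $\alpha_0\le 1$.

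The reason the paper says the proof is \emph{easier} is that in the $ii$-case one has the identity $\oG_{ii}=(H_i\vee A)^\star$ (stated just above the lemma). Since $H_i$ and $A$ are smooth and convex (assumptions \eqref{eq:hi-convex}--\eqref{eq:a-convex}), $H_i\vee A$ is convex, and if the supremum in the Legendre transform is attained at $P$ then $Z\in\partial(H_i\vee A)(P)$. For a maximum of two smooth convex functions, this subdifferential is exactly the convex hull of the gradients of the active ones: either $\{DH_i(P)\}$ when $H_i(P)>A(p')$, or $\mathrm{conv}\{DH_i(P),DA(P)\}$ when $H_i(P)=A(p')$. In both cases $Z=\alpha_i DH_i(P)+\alpha_0 DA(P)$ with $(\alpha_i,\alpha_0)\in\mathcal T_i$, which is the claim. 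No constraint-qualification discussion and no $\eps$-perturbation are needed. If you want to stay with the KKT route, the missing ingredient for $\alpha_i\ge 0$ is a global-maximality argument: if $\alpha_i<0$ at $P=(p',p_i)$ on the boundary $H_i=A$, then $z_i$ and $\partial_i H_i(P)$ have opposite signs, and replacing $p_i$ by the other root of $H_i(p',\cdot)=A(p')$ strictly increases $P\cdot Z-\lambda$, contradicting maximality.
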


\begin{lem}[Uniqueness of $(P,\lambda)$: $ii$-version]\label{lem::r17}
  Let $Z =(z',z_i)\in \R^{d+1}$. If there exists $\alpha,P,\lambda$
  and $\beta,Q,\mu$ such that $\alpha,\beta\in \mathcal T_i$ and
\[\begin{cases} 
\oG_{ii} (Z) = P \cdot Z - \lambda = Q \cdot Z -\mu, \\
Z = D ( \alpha \cdot H) (P) = D (\beta \cdot H) (Q).
\end{cases} \] Then $\lambda=\mu$, $p'=q'$ and
\begin{equation}\label{eq::r12b}
p_i=q_i=\pi_i^+(p',\lambda) \quad \mbox{if}\quad z_i>0
\end{equation}
and
\begin{equation}\label{eq::r14b}
p_i=q_i=\pi_i^-(p',\lambda)  \quad \mbox{if}\quad z_i<0
\end{equation}
Moreover under the previous assumptions, and in all cases, we can define either
$$\hat P = (p',\pi_i^+(p',\lambda)) \quad \mbox{if}\quad z_i\ge0$$
or
$$\hat P = (p',\pi_i^-(p',\lambda)) \quad \mbox{if}\quad z_i\le0$$
and then we always have
$$\oG_{ij} (Z) = \hat P \cdot Z - \lambda \quad\mbox{and}\quad Z= D(\alpha\cdot H)(\hat P)$$

\end{lem}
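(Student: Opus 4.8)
\textbf{Plan of proof for Lemma~\ref{lem::r17}.}
The plan is to mirror the proof of Lemma~\ref{lem:P-unique}, but with the simpler simplex $\mathcal T_i\subset[0,1]^2$ replacing $\mathcal T\subset[0,1]^3$, so the whole case analysis collapses to essentially one interesting case. First I would set $\bar P=Q-P$, $\bar\alpha=\beta-\alpha$, and introduce $\Psi(P,\alpha)=D(\alpha\cdot H)(P)$ with $\alpha\cdot H=\alpha_iH_i+\alpha_0A$. From the hypothesis $D(\alpha\cdot H)(P)=D(\beta\cdot H)(Q)$, writing the difference as an integral along the segment $\theta\mapsto(P+\theta\bar P,\alpha+\theta\bar\alpha)$ and taking the scalar product with $\bar P$, I get $0=T_1+T_2$ where
\[
T_1=\int_0^1 D^2_{PP}\big((\alpha+\theta\bar\alpha)\cdot H\big)(P+\theta\bar P)\,\bar P\cdot\bar P\,d\theta,\qquad
T_2=\int_0^1 D_PH(P+\theta\bar P)\,\bar\alpha\cdot\bar P\,d\theta .
\]
Here $D^2 H_i>0$ and $D^2A>0$ by \eqref{eq:hi-convex}--\eqref{eq:a-convex}, and along the segment $\alpha+\theta\bar\alpha\in\mathcal T_i$ has nonnegative components, so $T_1\ge0$. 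For $T_2$, using the complementarity relations $\alpha_0(A(P)-H_i(P))=0$ and $\beta_0(A(Q)-H_i(Q))=0$ (there is no constraint $H_i=H_j$ here, which is why the $ii$-case is even easier), one computes $T_2=\bar\alpha\cdot(H(Q)-H(P))=(\beta_0-\alpha_0)\big((A-H_i)(Q)-(A-H_i)(P)\big)=\beta_0(H_i(P)-A(P))+\alpha_0(H_i(Q)-A(Q))\ge0$, since on the germ $H_i\ge A$. Hence both $T_1=0$ and $T_2=0$.

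From $T_1=0$ and strict convexity of $H_i$ and $A$: either there is $\theta\in(0,1)$ with $\alpha+\theta\bar\alpha\in\interieur\mathcal T_i$, forcing $\bar P=0$, i.e. $P=Q$ (and then $\lambda=P\cdot Z-\oG_{ii}(Z)=\mu$); or $\alpha=\beta$ is a vertex, either $(1,0)$ or $(0,1)$. If $\alpha=\beta=(1,0)$ then $\alpha_i=1$, so $Z=\nabla_PH_i(P)=\nabla_PH_i(Q)$, hence $p'=q'$ and $p_i=q_i$ by strict convexity of $H_i$, and $\lambda=\mu$. If $\alpha=\beta=(0,1)$ then $Z=\nabla_PA(P)=\nabla_PA(Q)$, which involves only the $p'$ variables: it gives $p'=q'$, while $z_i=\partial_iA$ has no $p_i$-dependence, so $p_i$ and $q_i$ are only constrained by $H_i(p',p_i)=\lambda=H_i(p',q_i)$ together with $\partial_iH_i\ge0$ (respectively $\le0$) coming from the sign of $z_i$; then $0=(p_i-q_i)z_i=P\cdot Z-Q\cdot Z=\lambda-\mu$, so $\lambda=\mu$, and when $z_i>0$ the relation $H_i(p',p_i)=\lambda$ with $\partial_iH_i(p',p_i)\ge0$ forces $p_i=\pi_i^+(p',\lambda)$, when $z_i<0$ it forces $p_i=\pi_i^-(p',\lambda)$, and when $z_i=0$ either branch is admissible.

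In all cases we have shown $\lambda=\mu$, $p'=q'$, and \eqref{eq::r12b}--\eqref{eq::r14b}. For the last assertion, define $\hat P=(p',\pi_i^+(p',\lambda))$ if $z_i\ge0$ and $\hat P=(p',\pi_i^-(p',\lambda))$ if $z_i\le0$ (consistent when $z_i=0$ since then the $i$-component of $Z$ does not see the choice). Since $\hat P$ differs from $P$ at most in the $p_i$-slot, and that slot is multiplied by $z_i$ in $P\cdot Z$, we get $\hat P\cdot Z-\lambda=P\cdot Z-\lambda=\oG_{ii}(Z)$; and $Z=D(\alpha\cdot H)(\hat P)$ holds because the first $d$ components are $\nabla_{p'}$-derivatives which only involve $p'$ (unchanged), while the last component is $\alpha_i\partial_iH_i(p',\cdot)$ evaluated at a point where $H_i=\lambda$ and the derivative has the sign matching $z_i$, which by strict convexity pins it down to the same value as before. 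Since this is precisely the $\oG_{ii}$ transcription of the $\oG_{ij}$ argument with the constraint $H_i=H_j$ and the index $j$ simply removed, the only thing to be careful about is bookkeeping with the degenerate slot $z_i=0$; there is no genuine obstacle, which is why the paper states it without proof.
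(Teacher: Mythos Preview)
Your proposal is correct and follows exactly the route the paper intends: the authors state this lemma ``without proof, because the proofs are even easier'' than the $ij$-version, and your adaptation of the proof of Lemma~\ref{lem:P-unique} to the one-dimensional simplex $\mathcal T_i$ is precisely what is meant.

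One small imprecision worth tightening: in your final paragraph you justify $Z=D(\alpha\cdot H)(\hat P)$ by saying ``the first $d$ components are $\nabla_{p'}$-derivatives which only involve $p'$''. This is not literally true, since $\nabla_{p'}H_i(p',p_i)$ does depend on $p_i$. The clean way to argue is to observe that the only situation in which $\hat P\neq P$ is when $z_i=0$ \emph{and} $\alpha_i=0$ (if $z_i\neq 0$ you have already shown $p_i=\pi_i^\pm(p',\lambda)=\hat p_i$; if $z_i=0$ but $\alpha_i>0$ then $\partial_iH_i(P)=0$, so $p_i=\pi_i^0(p')$ and $\lambda=A_i(p')$, whence $\pi_i^+(p',\lambda)=\pi_i^-(p',\lambda)=\pi_i^0(p')=p_i$). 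In that remaining case $\alpha\cdot H=A$ depends only on $p'$, so $D(\alpha\cdot H)(\hat P)=D(\alpha\cdot H)(P)=Z$ trivially. With this remark your argument is complete.
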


We now turn to the regularity of $G^0_{ii}$. 
\begin{lem}[Gradients of $G^0_{ii}$]\label{lem:DGii}
$G^0_{ii}$ is $C^1$ in $J_i \times J_i \setminus \{x_i=y_i>0\}$.
For $(X,Y) \in J_i \times J_i$ such that $x_i \neq y_i$, we have
\[ DG^0_{ii} (X,Y) = (p',p_i,-p',-p_i) \quad \mbox{and}\quad P=(p',p_i) \]
with $p_i = \pi_i^\pm (p',\lambda)$ if $\pm (x_i - y_i) > 0$. 
Here $(p',\lambda)=(\mathfrak{P}(X,Y),\mathfrak{L}(X,Y))$ is uniquely determined by 
\[\begin{cases}
 G^0_{ii} (X,Y)  = p' \cdot (x'-y') + p_i (x_i-y_i) - \lambda\\
 Z = \alpha_i D H_i (P) + (1-\alpha_i) DA (P) \quad \mbox{with}\quad Z=(x'-y',x_i-y_i)
\end{cases}\]
which holds true for some $\alpha_i \in [0,1]$. In particular, 
the maps $\mathfrak{P}$ and $\mathfrak{L}$ are continuous in $J_i
\times J_i$. Moreover the restrictions of $G_{ii}^0$ to $(J_i\times J_i) \cap \left\{\pm(x_i-y_i)\ge 0\right\}$
are $C^1$ and
$$G_{ii}^0(x',0,y',0)=p'\cdot(x'-y')-\lambda$$
with
$$DG_{ii}^0(x',0,y',0)=(p',\pi_i^+(p',\lambda), -p',-\pi_i^-(p',\lambda))$$
\end{lem}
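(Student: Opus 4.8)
The plan is to reduce everything to the one-variable Legendre--Fenchel analysis already prepared in Lemmas~\ref{lem::r16} and \ref{lem::r17}, exactly as was done for the $i\neq j$ case in Lemmas~\ref{lem:}--\ref{lem:Gij-C1}. First I would recall that $G^0_{ii}(X,Y)=\oG_{ii}(Z)$ with $Z=X-Y=(x'-y',x_i-y_i)$ and $\oG_{ii}=(H_i\vee A)^\star$ is a convex function on $\R^{d+1}$. By coercivity and superlinearity of $H_i$ (hence of $H_i\vee A$, since $A\ge A_0\ge A_i$), the supremum defining $\oG_{ii}(Z)$ is attained at some $(P,\lambda)\in\mathcal G_A^{ii}$; Lemma~\ref{lem::r16} then produces $\alpha_i\in[0,1]$ with $Z=\alpha_i DH_i(P)+(1-\alpha_i)DA(P)$, and Lemma~\ref{lem::r17} gives uniqueness of $(p',\lambda)$ together with the identification $p_i=\pi_i^\pm(p',\lambda)$ according to the sign of $z_i=x_i-y_i$ (with $p_i=\pi_i^+$ on $\{z_i\ge 0\}$ and $p_i=\pi_i^-$ on $\{z_i\le 0\}$, consistently on the overlap $z_i=0$ since there both expressions are legitimate choices). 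Standard convex duality then says that $\oG_{ii}$ is differentiable at $Z$ precisely where the maximiser $P$ is unique, and $D\oG_{ii}(Z)=P$; translating back through $Z=X-Y$ yields $DG^0_{ii}(X,Y)=(p',p_i,-p',-p_i)$.

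Next I would localize the non-uniqueness. The maximiser $(p',\lambda)$ is always unique by Lemma~\ref{lem::r17}; the only freedom is in $p_i$, and this freedom occurs exactly when $z_i=0$ and $\alpha_i=0$, i.e. when $\pi_i^-(p',\lambda)<\pi_i^+(p',\lambda)$ can both be chosen. Thus $G^0_{ii}$ fails to be $C^1$ only on a subset of $\{x_i=y_i\}$; and on $\{x_i=y_i=0\}$ the two candidate gradient components $(\,\cdot,\pi_i^+(p',\lambda),\cdot,-\pi_i^-(p',\lambda))$ are both valid one-sided limits, which is exactly the asserted boundary formula $DG^0_{ii}(x',0,y',0)=(p',\pi_i^+(p',\lambda),-p',-\pi_i^-(p',\lambda))$. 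Hence $G^0_{ii}\in C^1\big(J_i\times J_i\setminus\{x_i=y_i>0\}\big)$. For the one-sided regularity, I would observe that on $\{z_i\ge 0\}$ the function $\oG_{ii}$ agrees with the sup over the restricted germ where $p_i=\pi_i^+(p',\lambda)$, which is smooth in $(p',\lambda)$ by Lemma~\ref{lem:pi-pm}; the same convex-duality/uniqueness argument (now the maximiser is genuinely unique on the closed half-space, since the $\pi_i^+$ branch is selected) shows the restriction of $G^0_{ii}$ to $\{x_i\ge y_i\}$ is $C^1$ up to the boundary, and symmetrically on $\{x_i\le y_i\}$ with $\pi_i^-$.

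Finally, continuity of the maps $\mathfrak P,\mathfrak L$ follows from the uniqueness in Lemma~\ref{lem::r17} plus a routine compactness argument: if $(X_n,Y_n)\to(X,Y)$, the associated $(p'_n,\lambda_n)$ stay bounded (again by superlinearity of $H_i$, since $\lambda_n=H_i(p'_n,(p_i)_n)$ and $(p_i)_n\in[\pi_i^-,\pi_i^+]$ is controlled once $\lambda_n$ and $p'_n$ are controlled, while $\lambda_n=G^0_{ii}(X_n,Y_n)+\ldots$ is bounded by continuity of $G^0_{ii}$ from Lemma~\ref{lem:cont}), and any limit point is a maximiser for the limit $Z$, hence equals $(\mathfrak P(X,Y),\mathfrak L(X,Y))$ by uniqueness. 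The main obstacle is purely bookkeeping: keeping straight that $(p',\lambda)$ is always unique while $p_i$ is not, and checking that the $\pm$ branch selection is consistent across $\{z_i=0\}$ so that the stated one-sided $C^1$ regularity and the boundary gradient formula both hold; the analytic content (convex duality $=$ differentiability at points of unique support, plus smoothness of $\pi_i^\pm$ from Lemma~\ref{lem:pi-pm}) is entirely standard.
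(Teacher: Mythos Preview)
Your proposal is correct and follows essentially the same approach as the paper. In fact, the paper gives no explicit proof of this lemma: it simply states Lemmas~\ref{lem::r16} and \ref{lem::r17} (the $ii$-analogues of Lemmas~\ref{lem:} and \ref{lem:P-unique}) and then states Lemma~\ref{lem:DGii}, leaving the reader to infer that it follows from those two lemmas exactly as Lemma~\ref{lem:Gij-C1} followed from Lemmas~\ref{lem:} and \ref{lem:P-unique}. Your write-up supplies precisely the missing details --- the convex-duality identification of $D\oG_{ii}$ with the (unique) maximiser, the localization of non-uniqueness to $\{z_i=0\}$, the one-sided $C^1$ regularity via the $\pi_i^\pm$ branch selection, and the compactness argument for continuity of $\mathfrak P,\mathfrak L$ --- and is fully consistent with the paper's intended route.
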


\subsection{Proof of Proposition~\ref{pro:vertex-smooth}}

\label{subsec:pro80}

We now turn to the proof of Proposition~\ref{pro:vertex-smooth}.
\begin{proof}[Proof of Proposition~\ref{pro:vertex-smooth}]
The proof proceeds in several steps.

\paragraph{Step 1: Regularity.}
We already noticed in Lemma~\ref{lem:cont} that $G^0\in C(J^2)$ and
Lemmas~\ref{lem:Gij-C1} and \ref{lem:DGii} imply that $G^0 \in C^1(\mathcal R)$
for each region $\mathcal R$ given by
\begin{equation}\label{eq::71}
\mathcal R=\begin{cases}
J_i\times J_j &\quad \text{if}\quad i\not=j,\\
T_i^\pm =\left\{(X,Y) \in J_i\times J_i,\quad \pm (x_i-y_i) \ge 0 \right\} 
 &\quad \text{if}\quad i=j.
\end{cases}
\end{equation}

\paragraph{Step 2: Computation of the gradients.}
For each $\mathcal R$ given by \eqref{eq::71} and for all $(X,Y)\in \mathcal R \subset
J_i\times J_j$, Lemmas~\ref{lem:Gij-C1} and \ref{lem:DGii} imply that
\[G^0(X,Y)= p' \cdot (x'-y') + p_i x_i -p_j y_j -\lambda\]
and
\[ (D',\partial_i) G^0_{|\mathcal R}(X,Y)=(p',p_i)\quad \text{and}\quad 
-(D',\partial_j) G^0_{|\mathcal R}(X,Y)=(p',p_j)\]
with $\lambda=\mathfrak{L}(X,Y)$ and $p'=\mathfrak{P} (X,Y)$ with 
\begin{equation}\label{eq:pipj}
(p_i,p_j)=\left\{\begin{array}{lll}
(\pi^+_i(p',\lambda),\pi^-_j(p',\lambda)) &\quad 
\text{if}\quad \mathcal R=J_i\times J_j &\quad \text{with}\quad i\not=j,\\
(\pi^\pm_i(p',\lambda),\pi^\pm_i(p',\lambda)) &\quad
\text{if}\quad \mathcal R=T^\pm_i &\quad \text{with}\quad i=j. 
\end{array}\right.
\end{equation}
Notice in particular that $\mathfrak{P}$ and $\mathfrak{L}$ are
continuous in $J \times J$.
We also easily deduce that $G^0(X,Y)\ge G^0(X,X)=G^0(0,0)$.

\paragraph{Step 3: Checking the compatibility condition on the gradients.}
Let us consider $(X,Y)\in J^2$, $X=(x',x)$, $Y=(y',y)$ with $x=y=0$ or $x\not= y$.
We have
\begin{align*}
D_X (G^0(\cdot, Y))(X)\in \{(p',\pi^\pm_i(\lambda))\} \\
-(D_Y G^0(X,\cdot))(Y) \in \{(p',\pi^\pm_j(\lambda))\}
\end{align*}
with $\lambda \ge A(p')$. We claim that
\begin{equation}\label{eq:hx}
H(X,D_X G^0(X,Y))=\lambda \quad \mbox{for}\quad N\ge 1
\end{equation}
and
\begin{equation}\label{eq:hy}
H(Y,-D_Y G^0(X,Y))\le \lambda \quad \mbox{for}\quad N\ge 1
\end{equation}
with equality for $N\ge 2$
(we use here once again the short hand notation \eqref{eq:shorthand}).

Equality \eqref{eq:hx} is clear except if $x=0$. In this case, if $y\neq 0$,
say $Y \in J_j$,
the desired equality is rewritten as 
\[ \max (A(p'), \max_i H_i^- (p',p_i)) = \lambda\] with $p_i =
\pi_i^+(p',\lambda)$ if $i \neq j$ and $p_j = \pi_j^-(p',\lambda)$.
Since $\lambda \ge A(p')$ and $H_j^- (p',p_j)=\lambda$, we get the
result for $N\ge 2$. For $N=1$, we have $x-y<0$ and then $p_i=\pi_i^-(p',\lambda)$ which gives again the result.
If now $(x,y)=(0,0)$, then $p_i = \pi_i^+(p',\lambda)$ for all index $i$ and
$\lambda = A(p')\ge A_0(p')$. Hence, we get \eqref{eq:hx} in this case too.\\

One can derive \eqref{eq:hy} in the same way, even with equality for $N\ge 2$.
For $N=1$, where $y=0$, $X=(x',x_i)\in J_i^*$, i.e.  $x_i-y_i>0$, this gives $p_i=\pi_i^+(p',\lambda)$, and we only get
$$H(Y,-D_Y G^0(X,Y)) = \max(A(p'),\min H_i(p',\cdot))\le \lambda$$
with a strict inequality (for $\lambda>A(p')$). On the other hand, we recover equality for $y\not=0$.

\paragraph{Step 4: Superlinearity.}
In view of the definition of $G^0$, we deduce from \eqref{eq:pipj}
that for all $R>0$ and $\lambda > A (R (x'-y')/|x'-y'|)$,
\[G^0(X,Y)\ge R |x'-y'| + \left\{\begin{array}{ll}
x \pi^+_i(R \widehat{x'-y'}, \lambda)-y \pi^-_j(R \widehat{x'-y'},\lambda) - \lambda & \quad \text{if}\quad i\not=j,\\
(x-y) \pi^\pm_i(R \widehat{x'-y'},\lambda) -\lambda & \quad \text{if}\quad i=j, \pm (x-y)\ge 0
\end{array}\right.\]
where $\hat z = z /|z|$. 
For $R>0$, we define
\[ \pi^0(R,\lambda):=\min \{ \eps \pi^\eps_i(p', \lambda) : {\eps \in \{+,-\}, \ i=1,...,N, |p'| \le R} \} \ge
0. \]
Hence we get
\[G^0(X,Y)\ge R |x'-y'|+ \pi^0(R,\lambda)d(x,y)  -\lambda\]
where 
\[ d(x,y) =
\begin{cases}
  |x_i-y_i| & \text{ if } X,Y \in J_i \\
x_i+y_j & \text{ if } X \in J_i, Y \in J_j, i \neq j.
\end{cases}\]

From the definition \eqref{eq::21}  of $\pi^\pm_i$ and the
assumption~\eqref{eq:hi-convex} on the Hamiltonians, we deduce that
\[\pi^0(R,\lambda)\to +\infty \quad \text{as}\quad \lambda\to +\infty\]
and fix some $\lambda(R)\ge \sup_{|p'|\le R} A(p')$ such that $\pi^0(R,\lambda(R))\ge R$. This gives
$$G^0(X,Y)\ge R d(X,Y) -\lambda(R).$$
Therefore we get \eqref{eq::20} with
\[g^0(a)=\sup_{R\ge 0} (Ra - \lambda(R)).\]

\paragraph{Step 5: Gradient bounds.}
Because each component of the gradients of $G^0$ are equal to one of
the $\left\{(p',\pi^\pm_k(p',\lambda))\right\}_{k=1,...,N}$ with
$\lambda= \mathfrak{L}(X,Y)$ and $p' = \mathfrak{P}(X,Y)$, we deduce
\eqref{eq::19} from the continuity of $\mathfrak{L}$, $\mathfrak{P}$
and $\pi^\pm_k$. We use in particular the fact that $\mathfrak{L}$ and $\mathfrak{P}$ only depend on $x'-y'$
and $x_i-y_i$ if $X,Y\in J_i$; and $x'-y'$ and $(x_i,-y_j)$ if $X\in J_i$, $Y\in J_j$ with $i\not= j$.
\end{proof}

\section{Proof of the main theorem}
\label{sec:gen}

\subsection{Proof of  Theorem~\ref{th::G} in the smooth convex case}

With Proposition~\ref{pro:vertex-smooth} in hand, we can now prove
Theorem~\ref{th::G} in the case of smooth convex Hamiltonians.
\begin{lem}[The case of smooth convex Hamiltonians]\label{lem:case-convex}
  Assume that the Hamiltonians satisfy \eqref{eq:hi-convex} and the
  flux limiter $A$ satisfies \eqref{eq:a-convex}.  Then the conclusion
  of Theorem~\ref{th::G} holds true.
\end{lem}
\begin{proof} 
Recall that \eqref{eq:def-g0} can be written as
\[ G^0_{ii}(X,Y)=\oG_{ii}(Z) \quad \mbox{with}\quad Z=X-Y\]
where we recall that $\oG_{ii}$ is defined in \eqref{eq:GGG}. 
Substracting $G^0(0,0)$ to $G^0$ if necessary, we can assume that $G^0(0,0)=0$.
It is enough (and it is our goal) to regularize $G^0_{ii}$ in a neighborhood
  of $\{x_i=y_i\}\backslash \left\{x_i=y_i=0\right\}$.
Let $\eps_0\in (0,1]$ small to fix later, and consider a smooth nondecreasing function $\zeta:\R
    \to [0,1]$ satisfying $\zeta=0$ on $(-\infty,0]$, $\zeta>0$ on $(0,+\infty)$, and $\zeta=1$ on $[B,+\infty)$, with $B\ge 1$ large.
    We also consider a smooth nonincreasing function $\xi : [0,+\infty) \to (0,+\infty)$ with  $\xi(+\infty)=0$,
    which satisfies in particular for $Z=(z',z_i)$ and a real $\bar z_i$
\[
\left|\oG_{ii}(z',z_i)-\oG_{ii}(z',\bar z_i)\right|\le \frac{|z_i-\bar z_i|}{\xi(|z'|)} \quad \mbox{if}\quad |z_i| , |\bar z_i|\le 2\xi(|z'|).
\]
 We will regularize $G^0_{ii}$ in a neighborhood of $\{x=y\}$ of half thickness $\varepsilon_0\theta$ with
 \[
\theta(z',x_i+y_i):= \xi(|z'|)\zeta(x_i+y_i).
\]
 
 To this end, we consider a smooth cut-off function $\Psi:\R \to
        [0,1]$ such that $\mathrm{supp} \, \Psi \subset[-1,1]$ with $\Psi=1$ on $[-1/2,1/2]$.
        We will also use a one-dimensional non-negative  mollifier
        $$\rho_\eta(z_i)=\frac{1}{\eta}\rho(\frac{z_i}{\eta})$$ 
        with $\mbox{supp } \rho\subset [-1,1]$ to regularize by convolution the function $\oG_{ii}(Z)$ in the direction of $z_i$ only,
        because $\oG_{ii}(Z)$ is already $C^1$ in the other directions $z'$.
        Finally we define with $Z=(z',z_i)$ and $z'=x'-y'$, $z_i=x_i-y_i$, the function
        \begin{multline*} G_{ii} (X,Y) = \left(1-\Psi \left(\frac{z_i}{\eps_0
              \theta(z',x_i+y_i)}\right)\right) \oG_{ii}(z',z_i) \\+
        \Psi \left(\frac{z_i}{\eps_0 \theta(z',x_i+y_i)}\right)
        \int_{a\in\R}\rho_{\eps_0 \theta(z',x_i+y_i)}(a)
        \oG_{ii}(z',z_i-a).\end{multline*}
        This regularization procedure preserves the desired properties
        like estimates \eqref{eq::20} (with a possible different
        function $g$ but independent on any $\varepsilon_0\in (0,1]$)
        and \eqref{eq::19} with a possible different constant $C_K$.
        Moreover, for $\varepsilon_0>0$ small enough, this
        regularization procedure introduces a small error $\gamma$ in
        \eqref{eq::85} and another small error $\gamma$ in
        \eqref{eq:shorthandbis}.  This ends the proof of the lemma.
\end{proof}

\subsection{Proof of  Theorem~\ref{th::G} in the general case}

Let us consider a slightly stronger assumption than \eqref{assum:H},
namely
\begin{equation}\label{eq::3}
\left\{\begin{array}{l}
H_i \in C^2(\R^{d+1}) \quad \text{with}\quad \min H_i = H_i (P_i^0)
\quad \text{ and } \quad D^2 H_i(P_i^0)>0,\\
D^2 H_i > 0 \quad \mbox{on}\quad (DH_i)^\perp, \quad \mbox{and}\quad DH_i(P)\not=0 \quad \mbox{for}\quad P\not=P^0_i\\
\displaystyle \lim_{|P|\to +\infty} H_i(P)=+\infty.
\end{array}\right.
\end{equation}
Notice that the second line basically says that the sub-level sets are
strictly convex. The following technical result will allow us 
to reduce a large class of quasi-convex Hamiltonians to convex ones.
\begin{lem}[From quasi-convex to convex Hamiltonians]\label{lem::1}
Given Hamiltonians $H_i$ satisfying \eqref{eq::3}, there exists a function $\beta:\R\to \R$ such that
the functions $\beta \circ H_i$ satisfy \eqref{eq:hi-convex} for
$i=1,...,N$.  Moreover, we can choose $\beta$ such that
\begin{equation}\label{eq::23}
\beta \quad \text{is convex},\quad \beta\in C^2(\R) 
\quad \text{and}\quad \beta'\ge \delta >0.
\end{equation}
\end{lem}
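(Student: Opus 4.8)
The goal is to find a single scalar reparametrisation $\beta$ that simultaneously convexifies all the quasi-convex Hamiltonians $H_i$ satisfying \eqref{eq::3}. The idea is that near the minimum $P_i^0$ the Hessian $D^2 H_i(P_i^0)$ is already positive definite, while away from the minimum the level sets are strictly convex (the second line of \eqref{eq::3}) and $DH_i \neq 0$, so the only obstruction to convexity is along the gradient direction, where $H_i$ may grow sub-linearly or concavely. Composing with a rapidly convex increasing $\beta$ fixes exactly this: for $P \neq P_i^0$, one computes
\[ D^2(\beta\circ H_i)(P) = \beta'(H_i(P))\, D^2 H_i(P) + \beta''(H_i(P))\, DH_i(P)\otimes DH_i(P). \]
On the hyperplane $(DH_i(P))^\perp$ the first term is positive by \eqref{eq::3}; in the direction $DH_i(P)$ itself, the second term dominates as soon as $\beta''/\beta'$ is large enough, and cross terms are controlled by Cauchy--Schwarz. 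So the plan is: (1) localise and reduce to a compact set, (2) on that compact set extract a uniform lower bound forcing the required size of $\beta''/\beta'$, (3) patch with the behaviour near $P_i^0$ and at infinity, and (4) record the extra properties \eqref{eq::23}.

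\textbf{Step 1 (near the minima).} First I would note that for each $i$, since $D^2 H_i(P_i^0) > 0$ and $H_i \in C^2$, there is a ball $B(P_i^0, r_i)$ on which $D^2 H_i \ge c_i \mathrm{Id} > 0$; shrinking, we may take a common radius and a common constant $c_0 > 0$ so that $D^2 H_i \ge c_0 \mathrm{Id}$ on $\bigcup_i B(P_i^0, r_0)$. There $H_i$ is already strictly convex, and composing with any convex nondecreasing $\beta$ (with $\beta' > 0$) keeps it convex, since $\beta''(H_i) DH_i\otimes DH_i \ge 0$. So no constraint on $\beta$ arises from these neighbourhoods beyond $\beta$ convex, $\beta' \ge \delta > 0$.

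\textbf{Step 2 (the compact annular region).} On the complement $K_R := \{ |P| \le R \} \setminus \bigcup_i B(P_i^0, r_0)$ for any fixed large $R$, we have $P \neq P_i^0$, hence $|DH_i(P)| \ge m_R > 0$ by \eqref{eq::3} and continuity, and $D^2 H_i(P) \ge -M_R \mathrm{Id}$ with $D^2 H_i(P)\, v\cdot v \ge \mu_R |v|^2$ for $v \in (DH_i(P))^\perp$. Decomposing $v = v_\parallel + v_\perp$ relative to $DH_i(P)$ and using $|D^2 H_i| \le M_R$, one gets, for $P \in K_R$ and any unit vector $v$,
\[ D^2(\beta\circ H_i)(P)\, v\cdot v \ \ge\ \beta'(H_i(P))\big(\mu_R |v_\perp|^2 - M_R |v_\parallel|\,|v_\perp| - M_R |v_\parallel|^2\big) + \beta''(H_i(P))\, m_R^2 |v_\parallel|^2. \]
Choosing $\beta$ so that $\beta''(s)/\beta'(s) \ge \Lambda_R$ with $\Lambda_R$ depending only on $m_R, M_R, \mu_R$ (a quadratic-in-the-cross-term completion of squares) makes the right side $\ge \tfrac{1}{2}\beta'(H_i(P)) \mu_R |v_\perp|^2 \ge 0$, in fact $> 0$ for $v \neq 0$. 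Since $H_i(K_R)$ is a compact interval $[a_R, b_R]$, it suffices that $\log \beta'$ be increasing with slope $\ge \Lambda_R$ on $[a_R, b_R]$. The main obstacle is making this requirement consistent across all radii $R \to \infty$ simultaneously: as $R$ grows, $\Lambda_R$ may grow and the interval $[a_R, b_R]$ shifts to $+\infty$ (by coercivity), so one wants $\Lambda_R$ to only constrain $\beta$ on larger and larger values of its argument. Concretely, set $s_R = \min_i \min_{|P| = R} H_i(P) \to +\infty$; for $s \ge s_R$ we need $(\log\beta')'(s) \ge \Lambda_R$. One builds $(\log \beta')'$ as a continuous nondecreasing function $\ell(s)$ with $\ell(s) \ge \Lambda_R$ for $s \ge s_R$ — e.g. interpolate the values $\Lambda_R$ at the points $s_R$ — then define $\beta'(s) = \delta \exp\big(\int_{s_0}^s \ell(\tau)\,d\tau\big)$ and $\beta(s) = \int_0^s \beta'(\tau)\, d\tau$, plus an affine adjustment so $\beta' \ge \delta$ everywhere. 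This $\beta$ is $C^2$, convex (as $\beta'$ nondecreasing and positive), and $\beta' \ge \delta > 0$, giving \eqref{eq::23}.

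\textbf{Step 3 (coercivity of $\beta\circ H_i$ and conclusion).} Finally I would check $\lim_{|P|\to\infty} (\beta\circ H_i)(P)/|P| = +\infty$, i.e. the first line of \eqref{eq:hi-convex}. Since $\beta$ is increasing and $H_i(P) \to +\infty$, and $\beta$ is convex with $\beta' \ge \delta$, in fact $\beta' \to +\infty$ along $[s_R]$ so $\beta$ is superlinear; combined with the (at least) coercivity of $H_i$ this gives $\beta(H_i(P)) \to +\infty$ faster than any linear function of $|P|$ — here one uses that $H_i(P) \ge $ some coercive lower bound and $\beta$ superlinear forces $\beta(H_i(P))/|P| \to \infty$; if one needs $H_i$ itself to be at least linearly coercive this follows from strict convexity of sublevel sets plus $DH_i \neq 0$. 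Putting Steps 1–3 together, $\beta \circ H_i \in C^2$, $D^2(\beta\circ H_i) > 0$ everywhere (strictly positive definite near $P_i^0$ by Step 1, and on each $K_R$ by Step 2, exhausting $\R^{d+1}$), and $\beta\circ H_i$ is superlinear, so \eqref{eq:hi-convex} holds; and $\beta$ satisfies \eqref{eq::23}. I expect Step 2 — organising the $R$-dependent constants into a single globally defined $\beta$ with the monotonicity and regularity of \eqref{eq::23} — to be the main technical obstacle, while Steps 1 and 3 are essentially bookkeeping.
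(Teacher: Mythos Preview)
Your strategy matches the paper's: compute $D^2(\beta\circ H_i)=\beta'(H_i)\,D^2H_i+\beta''(H_i)\,DH_i\otimes DH_i$, reduce positive-definiteness to a lower bound on $(\log\beta')'$ at each level, treat a neighbourhood of $P_i^0$ separately, and compose at the end for superlinearity. The paper records the resulting scalar condition as
\[
0 < (\ln\beta')'(\lambda)\,\widehat{DH_i}\otimes\widehat{DH_i}+\frac{D^2H_i}{|DH_i|^2}\qquad\text{on }\{H_i=\lambda\},
\]
and then chooses $\beta$ \emph{level set by level set}, using that each $\{H_i=\lambda\}$ is compact by coercivity.

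Your Step~2, however, has a genuine bookkeeping gap. You parametrise by the solid regions $K_R=\{|P|\le R\}\setminus\bigcup_i B(P_i^0,r_0)$ and extract a uniform $\Lambda_R$ over all of $K_R$. But $a_R=\min_{K_R}H_i$ does \emph{not} ``shift to $+\infty$'' as you claim: it is achieved near the fixed inner boundary $\partial B(P_i^0,r_0)$ and is independent of $R$. The intervals $[a_R,b_R]$ are therefore nested, and requiring $(\log\beta')'\ge\Lambda_R$ on $[a_R,b_R]$ forces $(\log\beta')'(a_{R_0})\ge\Lambda_R$ for \emph{every} $R$ --- impossible if $\Lambda_R\to\infty$. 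Your attempted repair (impose the bound only for $s\ge s_R:=\min_{|P|=R}H_i$) then leaves the points of $K_R$ near the inner boundary uncovered. The fix is precisely the paper's parametrisation: the pointwise constraint at $P$ depends only on $\lambda=H_i(P)$, the level set $\{H_i=\lambda\}$ is compact, so one obtains a finite $\Lambda(\lambda)$ and simply takes $(\log\beta')'(\lambda)\ge\max_i\Lambda_i(\lambda)$. A smaller issue: in Step~3 you assert that linear coercivity of $H_i$ follows from \eqref{eq::3}; it does not (take $H(P)=\log(1+|P|^2)$, which satisfies \eqref{eq::3}), so you should follow the paper and compose with a further convex, increasing, superlinear function if necessary to force $\beta\circ H_i$ superlinear.
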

\begin{proof}
In view of \eqref{eq::3}, it is easy to check that $D^2(\beta \circ
H_i)> 0$ if and only if we have
\begin{equation}\label{eq::22}
0< \left\{(\ln \beta')'(\lambda)\right\} \left(\widehat{DH_i}\otimes \widehat{DH_i}\right) \circ \pi_i^\pm(p',\lambda) + \frac{D^2 H_i}{|DH_i|^2}\circ
  \pi_i^\pm(p',\lambda)\quad \text{for}\quad \lambda > H_i(P_i^0), \quad p' \in \R^d.
\end{equation}
Because $D^2 H_i(P_i^0)>0$, we see that the right hand side is positive for
$\lambda$ close enough to $H_i(P_i^0)$.  Then it is easy to choose a
function $\beta$ satisfying \eqref{eq::22} and \eqref{eq::23} (looking at each level set $\left\{H_i=\lambda\right\}$).
Finally, compositing $\beta$ with another convex increasing function
which is superlinear at $+\infty$ if necessary, we can ensure that
$\beta\circ H_i$ superlinear.
\end{proof}
\begin{lem}[The case of smooth Hamiltonians]\label{lem:case-smooth}
  Theorem~\ref{th::G} holds true if the Hamiltonians satisfy
  \eqref{eq::3}.
\end{lem}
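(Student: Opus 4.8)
The plan is to reduce to the smooth convex case of Lemma~\ref{lem:case-convex} by composing the Hamiltonians with a scalar convex function, and then to recover the general quasi-convex flux limiter by approximation. First, apply Lemma~\ref{lem::1}: there is $\beta\in C^2(\R)$, convex, with $\beta'\ge\delta>0$ and $\beta\circ H_i$ superlinear, such that $\tilde H_i:=\beta\circ H_i$ satisfies \eqref{eq:hi-convex} for every $i$. Since $\beta$ is increasing, the monotone parts transform correctly ($\tilde H_i^\pm=\beta\circ H_i^\pm$, $\pi_i^0$ unchanged) and $\beta(\max(a,b))=\max(\beta(a),\beta(b))$, so that, setting $\tilde A:=\beta\circ A$, the shorthand Hamiltonian $\tilde H$ built from $\tilde H_i$ and $F_{\tilde A}$ satisfies $\tilde H(X,P)=\beta(H(X,P))$. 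The function $\tilde A$ is continuous, quasi-convex, coercive and $\tilde A\ge\tilde A_0=\beta\circ A_0$ (here $\tilde A_0$ is the $A_0$ of \eqref{eq:A0} associated with the $\tilde H_i$, and the identity follows again from monotonicity of $\beta$).

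Since $\tilde A$ need not satisfy \eqref{eq:a-convex}, I would pick a sequence $A^\eps$ satisfying \eqref{eq:a-convex} with $A^\eps\ge\tilde A_0$ and $A^\eps\to\tilde A$ locally uniformly (for instance $A^\eps=\tilde A\ast\rho_\eps+\eps|p'|^2$, corrected to lie above $\tilde A_0$). For every $\eps$, Lemma~\ref{lem:case-convex} applied to $\tilde H_i$ and $A^\eps$, with a parameter $\gamma'\in(0,\gamma]$ to be fixed later, produces a vertex test function $\tilde G^\eps$ satisfying (i)--(vi) of Theorem~\ref{th::G} for the Hamiltonian $\tilde H^\eps$ built from $\tilde H_i$ and $F_{A^\eps}$. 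The superlinearity function $g$ and the local gradient bounds $C_K$ can be taken independent of $\eps$: the $\tilde H_i$ do not depend on $\eps$, and for $d(X,Y)\le K$ the maximizers in the definition \eqref{eq:def-g0} of $G^0$ range over a fixed compact set of slopes $p'$ on which $A^\eps$ is uniformly bounded and equicontinuous.

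By \eqref{eq::19}, \eqref{eq::20} and \eqref{eq::85}, the family $(\tilde G^\eps)$ is locally bounded and locally equi-Lipschitz on $J^2$, so after a diagonal extraction $\tilde G^\eps\to\tilde G$ locally uniformly, and by Arzelà--Ascoli applied to the (explicitly computed) gradients, $D\tilde G^\eps\to D\tilde G$ locally uniformly on each region \eqref{eq::71}. Properties (ii), (iii), (v), (vi) pass to the limit directly, and the compatibility estimate \eqref{eq:shorthandbis} passes to the limit using $A^\eps\to\tilde A$ locally uniformly and the continuity of the $\pi_i^\pm$. Finally set $G:=\tilde G$. From $\tilde H=\beta\circ H$ and $\beta'\ge\delta$ we get $H=\beta^{-1}\circ\tilde H$ with $(\beta^{-1})'\le 1/\delta$, whence for $d(X,Y)\le K$,
\[ H(Y,-D_YG)-H(X,D_XG)\le\tfrac1\delta\big(\tilde H(Y,-D_Y\tilde G)-\tilde H(X,D_X\tilde G)\big)\le\tfrac{L_{C_K}}{\delta}\,\omega_{C_K}(\gamma'C_K), \]
where $L_{C_K}$ is a Lipschitz constant for $\beta$ on the range of $H$ on $\{|P|\le C_K\}$. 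Choosing $\gamma'$ small enough (as a function of $\gamma$) so that $\tfrac{L_{C_K}}{\delta}\,\omega_{C_K}(\gamma'C_K)\le\omega_{C_K}(\gamma C_K)$ for all $K$ and $\gamma'\le\gamma$, the function $G$ (after subtracting $G(0,0)$) satisfies all conclusions of Theorem~\ref{th::G}.

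The main obstacle is the second and third steps: one must choose the approximation $A^\eps$ of the merely quasi-convex limiter $\tilde A$ so that the associated vertex test functions carry $\eps$-uniform gradient and superlinearity bounds, and so that \eqref{eq:shorthandbis} --- the only conclusion genuinely coupling $G$ with $A$ --- survives the limit. Here the localization ``$d(X,Y)\le K$'' in \eqref{eq:shorthandbis} and \eqref{eq::19} is exactly what makes the argument work, since it confines the relevant slopes $p'$ to a fixed compact set on which $A^\eps\to\tilde A$ uniformly. Controlling the $K$-dependence of $C_K$ and of $\omega_{C_K}$ when selecting $\gamma'$ is the remaining technical subtlety.
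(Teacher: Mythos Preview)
Your core strategy---compose with the convexifying $\beta$ of Lemma~\ref{lem::1}, invoke the smooth convex case, and undo $\beta$ using $(\beta^{-1})'\le 1/\delta$---is exactly the paper's. The paper's argument is much shorter: it simply applies Lemma~\ref{lem:case-convex} to $\hat H_i=\beta\circ H_i$ and $\hat A=\beta\circ A$, obtains $\hat H(Y,-D_Y\hat G)\le\hat H(X,D_X\hat G)+\hat\gamma$, and reads off $H(Y,-D_YG)\le H(X,D_XG)+\hat\gamma/\delta$; no approximation of the limiter and no compactness limit appear.

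Your extra steps try to address a point the paper skates over (that $\hat A=\beta\circ A$ may fail \eqref{eq:a-convex}), but the fix has a genuine gap: a merely quasi-convex $\tilde A$ cannot in general be approximated locally uniformly by strictly convex functions, since a local uniform limit of convex functions is itself convex. Your candidate $\tilde A\ast\rho_\eps+\eps|p'|^2$ is not convex in general---mollifying a quasi-convex function does not yield a convex (or even quasi-convex) one, and the $\eps|p'|^2$ term is far too weak to repair the Hessian. The natural remedy is to approximate \emph{before} applying $\beta$: replace $A$ by a smooth quasi-convex $A_\delta$ with strictly convex sublevel sets (the $d$-variable analogue of \eqref{eq::3}), and enlarge the construction of $\beta$ in Lemma~\ref{lem::1} so that $\beta$ simultaneously convexifies $A_\delta$ and the $H_i$; then $\beta\circ A_\delta$ does satisfy \eqref{eq:a-convex}, Lemma~\ref{lem:case-convex} applies directly, and passing from $A_\delta$ back to $A$ costs only a small additive error in \eqref{eq:shorthandbis}. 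Separately, even if your $A^\eps$ existed, the Arzel\`a--Ascoli step for the \emph{gradients} is not justified: \eqref{eq::19} gives uniform bounds but not equicontinuity, and the $\tilde G^\eps$ from Lemma~\ref{lem:case-convex} are mollified versions of the $G^{0,\eps}$, so their gradients are not given by the explicit formulas of Lemmas~\ref{lem:Gij-C1} and \ref{lem:DGii}.
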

\begin{proof}
We assume that the Hamiltonians $H_i$ satisfy \eqref{eq::3}. Let
$\beta$ be the function given by Lemma~\ref{lem::1}. If $u$ solves
\eqref{eq:hj-FA} on $J_T$, then $u$ is also a 
solution of
\begin{equation}\label{eq::1ter}
\left\{\begin{array}{lll}
\bar \beta (u_t) + \hat{H}_i(Du)= 0  &\text{for}\quad t\in (0,T) &\quad \text{and}\quad X\in J_i^*,\\
\bar \beta (u_t)  + \hat{F}_{\hat{A}}(Du)=0   &\text{for}\quad  t\in (0,T) &\quad \text{and}\quad X \in \Gamma
\end{array}\right.
\end{equation}
with $\hat{F}_{\hat{A}}$ constructed as $F_A$ where $H_i$ and $A$ are replaced with
$\hat{H}_i$ and $\hat{A}$ defined as follows
\[\hat{H}_i = \beta\circ H_i,\quad \hat{A} = \beta(A)\]
and $\bar \beta(\lambda)=-\beta(-\lambda)$.  We can then apply
Theorem~\ref{th::G} in the case of smooth convex Hamiltonians to
construct a vertex test function $\hat{G}$ associated to problem
\eqref{eq::1ter} for every $\hat{\gamma}>0$.  This means that we have
with $\hat{H}(X,P) = \beta(H(X,P))$,
\[\hat{H}(Y, -D_Y G) \le \hat{H}(X,D_X G) +\hat{\gamma}.\]
This implies
\[{H}(Y, -D_YG) \le \beta^{-1}(\beta({H}(X,D_X G)) + \hat{\gamma}) \le
{H}(X,D_X G) + \hat{\gamma} |(\beta^{-1})'|_{L^\infty(\R)}.\]
Because of the lower bound on $\beta'$ given by Lemma~\ref{lem::1}, 
we get $|(\beta^{-1})'|_{L^\infty(\R)}\le 1/\delta$ which yields 
the compatibility condition~\eqref{eq:shorthandbis}
with $\gamma= \hat{\gamma}/\delta$ arbitrarily small.
\end{proof}
We are now in position to prove Theorem~\ref{th::G} in the general
case. 
\begin{proof}[Proof of Theorem~\ref{th::G}]
Let us now assume that the Hamiltonians only satisfy
\eqref{assum:H}. In this case, we  approximate the Hamiltonians
$H_i$ by other Hamiltonians $\tilde{H}_i$ satisfying \eqref{eq::3}
such that
\[|H_i-\tilde{H}_i|\le \gamma.\]
Smoothness ($C^2$) is obtained by a standard mollification. It does
not affect quasi-convexity and coercivity.  The condition
$D^2\tilde{H}_i (P_i^0)$ is easily obtained by adding a small
``localized'' $C^2$ quasi-convex function satisfying this condition since
$D^2 H_i (P_i^0) \ge 0$. In order to ensure that there is no critical
point apart from $P_i^0$ and that level sets are strictly convex
($D^2\tilde{H}_i >0$ in $(D\tilde{H}_i)^\perp$), another small
$C^2$ quasi-convex function is added.

We then apply Theorem~\ref{th::G} to the Hamiltonians $\tilde{H}_i$
and construct an associated vertex test function $\tilde{G}$ also for
the parameter $\gamma$.  We deduce that
\[H(Y, -\tilde{G}_Y) \le H(X,\tilde{G}_X) + 3\gamma\]
with $\gamma>0$ arbitrarily small, which shows again the compatibility
condition on the Hamiltonians \eqref{eq:shorthandbis} for the Hamiltonians
$H_i$'s. The proof is now complete in the general case.
\end{proof}

\section{Flux-limited solutions on a multi-dimensional junction}
\label{sec:flux-limited}

\subsection{Flux-limited solutions}
\label{s.v}

For $T>0$, set $J_T= (0,T)\times J$.  In order to define flux-limited
solutions, we first make precise the relevant class of test functions,
\begin{equation}\label{eq::r1}
C^1(J_T)=\left\{\varphi\in C(J_T),\; \varphi \text{ restricted to $(0,T)\times J_i$ 
is $C^1$ for $i=1,...,N$}\right\}.
\end{equation}
We also recall the definition of
upper and lower semi-continuous envelopes $u^*$ and $u_*$ of a
(locally bounded) function $u$ defined on $[0,T)\times J$:
\[u^*(t,X)=\limsup_{(s,Y)\to (t,X)} u(s,Y)
\qquad \text{and}\qquad u_*(t,X)=\liminf_{(s,Y)\to (t,X)} u(s,Y).\] 
\begin{defi}[Flux-limited solutions]\label{defi:fl}
  Assume the Hamiltonians satisfy \eqref{assum:H} and the flux limiter
  $A:\R^d \to \R$ is continuous. Let $u:[0,T)\times J\to \R$ be locally
  bounded.
\begin{enumerate}[i)]
\item We say that $u$ is a  \emph{$A$-flux-limited sub-solution}
  (resp. \emph{$A$-flux-limited  super-solution}) of \eqref{eq:hj-FA} in $J_T$
  if for all test function $\varphi\in C^1(J_T)$ such that
\[ 
u^*\le \varphi \quad (\text{resp.}\quad u_*\ge \varphi) 
\quad \text{in a neighborhood of $(t_0,X_0)\in J_T$}\]
with equality at $(t_0,X_0)$ for some $t_0>0$, we have
\begin{eqnarray}
\nonumber 
\varphi_t + H_i(D\varphi) \le 0 & \quad (\text{resp.}\quad \ge 0) 
\quad \text{at $(t_0,X_0)$ \qquad if $X_0\in J_i^*=J_i \setminus \Gamma$}\\
\label{eq::10}
\varphi_t + F_A(D\varphi) \le 0 & \quad (\text{resp.}\quad \ge 0) \quad
\text{at $(t_0,X_0)$ \qquad if $X_0 \in \Gamma$}.
\end{eqnarray}
\item  We say that $u$ is a \emph{$A$-flux-limited solution}  of \eqref{eq:hj-FA} if $u$ is
  both a $A$-flux-limited sub-solution and a $A$-flux-limited super-solution of \eqref{eq:hj-FA}.
\end{enumerate}
\end{defi}

\subsection{Proof of Theorem~\ref{th:comparison}}
\label{sec:comparison}

We now prove the comparison principle for \eqref{eq:hj-FA},
Theorem~\ref{th:comparison}. It implies in particular that the
$F$-relaxed solution given by Theorem~\ref{th:existence} is unique.
The proof follows the lines of the corresponding one in the
one-dimensional setting \cite{im}. The following elementary a priori
estimate is needed.
\begin{lem}[A priori control]\label{lem:apriori} 
  For $u$ and $v$ as in the statement of Theorem~\ref{th:comparison},
  there exists $C>0$ such that for all $(t,X), (s,Y) \in (0,T) \times J$, 
\begin{equation}\label{eq:diff-times}
 u(t,X) \le v(s,Y) + C (1+d(X,Y))).
\end{equation}
\end{lem}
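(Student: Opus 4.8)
\textbf{Proof proposal for Lemma~\ref{lem:apriori}.}

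The plan is to exploit the growth bounds \eqref{eq::27} satisfied by $u$ and $v$ together with a finite speed of propagation type argument to control the difference $u(t,X)-v(s,Y)$ for all times $t,s\in(0,T)$ and all $X,Y\in J$. First I would record the one-variable consequences of \eqref{eq::27}: for all $(t,X)\in[0,T)\times J$ one has $u(t,X)\le C_T(1+d(X_0,X))$ and $v(t,X)\ge -C_T(1+d(X_0,X))$, and since $d(X_0,X)\le d(X_0,Y)+d(Y,X)$ and likewise with the roles of $X,Y$ swapped, it suffices to produce a bound of the form $u(t,X)\le v(s,Y)+C(1+d(X,Y))$ with $C$ depending only on $C_T$, $T$, the Hamiltonians and $X_0$.

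The key step is to compare $u$ and $v$ with simple barrier functions that are affine in $d(X_0,\cdot)$ and linear in $t$. Using the coercivity and continuity of the $H_i$ (and hence a bound $|H_i(P)|\le \bar C$ for $|P|\le L$ once $L$ is fixed by the chosen slope), one checks that for a large enough constant $L$ (comparable to $C_T$) the function $(t,X)\mapsto v(s,Y)+L\big(1+d(X,Y)\big)+Ct$ is a (relaxed) super-solution of \eqref{eq:hj-f} on $(0,T)\times J$ for a suitable $C=C(L)$, where one uses that $X\mapsto d(X,Y)$ is, on each branch $J_i^*$, a smooth function with unit gradient, and that at $\Gamma$ the junction condition $F_A$ is automatically satisfied in the classical sense by such a test function because $F_A$ is built from the $H_i^-$ and $A$ (here Proposition~\ref{pro::1} and the explicit form \eqref{eq::4} of $F_A$ are convenient). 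Symmetrically, $v(s,Y)+L(1+d(X,Y))+Ct$ dominates $u$ at $t\to 0^+$ up to the initial-datum comparison, after absorbing the uniform continuity modulus of $u_0$ into the constant. One then invokes the comparison principle \emph{at fixed reference point} — that is, a localized/elementary version of the argument, or directly a sup/inf-convolution in time to decouple $t$ and $s$ — to conclude $u(t,X)\le v(s,Y)+C(1+d(X,Y))$.

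I expect the main obstacle to be the decoupling of the two time variables $t$ and $s$: the statement asks for a bound valid for all pairs $(t,X),(s,Y)$ simultaneously, not just $t=s$. The clean way around this is to note that, since the Hamiltonians do not depend on time, for any $h\in\R$ the translate $(t,X)\mapsto v(t+h,Y)$ (suitably restricted in $t$) is again a super-solution, so it is enough to get the estimate for $s=t$ and then let $h$ vary; alternatively one runs the barrier argument above on the function $w(t,X)=\sup_{s\in(0,T)}\big(v(s,X)-K|t-s|\big)$ for $K$ large, which is a super-solution by the standard sup-convolution stability and satisfies $w(s,Y)\ge v(s,Y)$ for all $s$. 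Either route reduces everything to comparing $u$ with an explicit affine-in-$d(X_0,\cdot)$ super-solution, which is routine given the coercivity in \eqref{assum:H} and the structure of $F_A$.
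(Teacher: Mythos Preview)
Your proposal has a genuine circularity problem: this lemma is invoked \emph{inside} the proof of Theorem~\ref{th:comparison} (it supplies the a priori bound needed to localise the doubling argument there), so you cannot appeal to the comparison principle --- not even a ``localized/elementary version'' of it --- in order to prove it. Any comparison on $J$ between a sub- and a super-solution, even with one of them smooth, still has to cope with the junction condition, and that is precisely what Theorem~\ref{th:comparison} and the vertex test function are for. Your barrier $X\mapsto v(s,Y)+L(1+d(X,Y))+Ct$ also has two concrete defects: $X\mapsto d(X,Y)$ is not $C^1$ on $J$ (it is singular when $X=Y$ on the same branch), and the initial inequality $u_0(X)\le v(s,Y)+L(1+d(X,Y))$ does not follow from the hypotheses, since $u_0$ is only uniformly continuous and $v(s,Y)$ carries an uncontrolled $d(X_0,Y)$ term via \eqref{eq::27}.

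The paper proceeds differently and avoids the circularity. First, it proves the equal-time estimate $u(t,X)\le v(t,Y)+C_1(1+d(X,Y))$ by a direct doubling in the space variables only, with the penalisation $\phi(X,Y)=(1+d^2(X,Y))^{1/2}$ plus a localisation $\alpha d^2(X_0,X)$, writing the two viscosity inequalities and subtracting. The point is that here one only needs the right-hand side $H(Y,-C_{1,2}\phi_Y)-H(X,C_{1,2}\phi_X+\cdots)$ to be \emph{bounded} by a constant depending on $C_{1,2}$, so no vertex test function or compatibility condition is required; choosing $C_{1,1}$ larger than this bound yields the contradiction. Second, to pass to different times, the paper builds Lipschitz barriers $U_\eps^\pm(t,X)=u_0^\eps(X)\pm Ct\pm\eps$ from a Lipschitz regularisation of $u_0$, applies the equal-time estimate once to the pair $(u,U_\eps^+)$ and once to $(U_\eps^-,v)$, and uses the trivial bound $U_\eps^+(t,Y)-U_\eps^-(s,X)\le C_2(1+d(X,Y))$ to combine them. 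Your time-translation idea (shifting $v$ by $h$) is in the right spirit, but it still presupposes the same-time estimate, which you have not obtained independently of Theorem~\ref{th:comparison}; and incidentally, for a super-solution the stable regularisation is the \emph{inf}-convolution $\inf_s[v(s,X)+K|t-s|]$, not the sup.
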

\begin{proof}
The proof proceeds in several steps. 
\medskip

\textsc{Barriers.}  Since $u_0$ is uniformly continuous, there exists
$u_0^\eps$ which is Lipschitz continuous and such that
\[ |u_0^\eps - u_0 | \le \eps. \]
We remark that 
\[ U_\eps^\pm (t,X) = u_0^\eps (x) \pm C t \pm \eps \] 
is a super-(resp. sub-)solution of \eqref{eq:hj-f}, \eqref{eq:ic} if 
$C$ is chosen large enough. 
\medskip

\textsc{Control at the same time.} We first prove that for $(t,X) \in (0,T) \times J$, 
\begin{equation}\label{eq:same-time}
 u(t,X) \le v(t,Y) + C_1 (1+d(X,Y)). 
\end{equation}
In order to get such an estimate, we consider 
\[ \phi (X,Y) = (1+ d^2 (X,Y))^{\frac12}.\]
It is $C^1$ in $J^2$ and $1$-Lipschitz continuous. 
We then consider 
\[ M = \sup_{t \in (0,T), X,Y \in J} u(t,X) -v(t,Y) - C_{1,1} t
-C_{1,2} \phi (X,Y) - \frac\eta{T-t} - \alpha d^2 (X_0,X) \]
for some $X_0 \in J$.  Our goal is to prove that $M \le 0$ for
$C_{1,1}$ and $C_{1,2}$ sufficiently large (independently of $\eta$
and $\alpha$ in $(0,1)$, say).  Since $u$ and $v$ are sub-linear, see
\eqref{eq::27}, we have
\[ u(t,X)-v(t,Y) \le C_T (2+ d(X_0,X)+ d(X_0,Y)).\]
In particular, the supremum $M$ is reached as soon as $C_{1,2} > C_T$. 
Since $u_0$ is uniformly continuous, there exists $C_0>0$ such that 
\[ u_0(X) - u_0(Y) \le C_0 \phi (X,Y). \]
In particular, if $C_{1,2}>C_0$, we are sure that the supremum is reached
for some $t>0$. 

We next explain why 
\begin{equation}\label{eq:desired-bis}
\alpha d(X_0,X) \le 2C_T (1+ C_T) = \tilde{C}_T
\end{equation}
for $X$ realizing the
supremum $M$. We have 
\begin{align*}
 C_{1,2} \phi(X,Y) + \alpha d^2 (X_0,X) & \le u(t,X) - v (t,Y) \\
& \le C_T  (2+ d(X_0,X)+ d(X_0,Y)) \\
& \le C_T (2 + 2 d(X_0,X) + \phi (X,Y)).
\end{align*}
In particular, with $C_{1,2} > C_T$, we get
\[ \alpha d^2 (X_0,X) \le 2C_T (1 + d(X_0,X)) \]
which yields \eqref{eq:desired-bis}. 

We now write the two viscosity inequalities. There exists $a,b \in \R$ with $a-b= C_{1,1}+ \eta (T-t)^2$ such that
\begin{align*}
a + H (X, C_{1,2} \phi_X (X,Y) + 2\alpha d(X_0,X)) \le 0 \\
b + H (Y,-C_{1,2} \phi_Y(X,Y)) \ge 0 
\end{align*}
where we abuse notation by writing $2\alpha d(X_0,X)$ instead of $2 \alpha d(X_0,X) n(X)$ with $n(X)=\pm 1$. Substracting these inequalities yields
\[
 C_{1,1}  \le H (Y,-C_{1,2} \phi_Y(X,Y))-H (X, C_{1,2} \phi_X (X,Y) + 2\alpha d(X_0,X)).
\]
We finally remark that the right hand side is bounded by a constant depending on $C_{1,2}$. 
We thus can choose $C_{1,1}$ large enough to reach the desired contradiction. 
\medskip

\textsc{Control at different times.} We now derive \eqref{eq:diff-times} from
the barriers constructed above and \eqref{eq:same-time}. Remark that 
\[ U_\eps^+ (t,Y) - U_\eps^-(s,X) \le L_\eps d(X,Y) + 2CT + 2 \eps \le C_2 (1+ d(X,Y)).\]
Applying \eqref{eq:same-time} to $u$ and $U_\eps^+$ and then to $U_\eps^-$ and $v$, we get 
\begin{align*}
 u(t,X) \le U_\eps^+ (t,Y) +  C_1 (1+d(X,Y)) \\
U_\eps^- (s,X) \le v (s,Y) + C_1 (1+ d(X,Y))
\end{align*}
Combining the three previous inequalities yields the desired result. 
\end{proof}

\begin{proof}[Proof of Theorem~\ref{th:comparison}]
Our goal is to prove that 
\[ M = \sup_{t \in (0,T), X \in J} u(t,X) - v(t,X) \le 0. \]
We argue by contradiction and assume that $M>0$. This implies that for $\eta$ and $\alpha$ small enough, 
we have for all $\eps>0$, $\nu>0$ that $M_{\eps,\alpha} \ge \frac{3M}4 > 0$ where
\[ M_{\eps,\alpha} = \sup_{(t,X), (s,Y) \in (0,T) \times J} u(t,X) -v(s,Y) - \eps G (\eps^{-1}X,\eps^{-1}Y)
- \frac{(t-s)^2}{2\nu} - \frac\eta{T-t} - \alpha d^2 (X_0,X) \]
where $G$ is the vertex test function given by Theorem~\ref{th::G} with $\gamma$ to be chosen. 
\medskip

Since $M_{\eps,\alpha}$ is larger than $3M/4$, we can restrict the supremum to points $(t,X)$, $(s,Y)$ such that 
\begin{equation}\label{eq:XY}  u(t,X) -v(s,Y) - \eps G (\eps^{-1}X,\eps^{-1}Y)
- \frac{(t-s)^2}{2\nu} - \frac\eta{T-t} - \alpha d^2 (X_0,X) \ge M/2.
\end{equation}
In particular, thanks to \eqref{eq::20} and Lemma~\ref{lem:apriori}, these points satisfy
\[\eps g\left(\frac{d(X,Y)}\eps \right) \le C (1 + d(X,Y)).\]
Since $g$ is super-linear, we have
\[ d(X,Y) = \omega (\eps) \]
for some modulus of continuity $\omega$ depending on $g$ and $C$. 
We can also derive from \eqref{eq:XY} and Lemma~\ref{lem:apriori} that 
\begin{equation}\label{eq:alpha}
 \alpha d^2(X_0,X) \le C (1 + d(X,Y)) \le C (1+ \omega (\eps)).
\end{equation}
In particular, the points satisfying \eqref{eq:XY} are such that $X$ and $Y$ are bounded by a constant
depending on $\alpha$; this implies that $M_{\eps,\alpha}$ is reached at points we keep denoting by $(t,X)$ and 
$(s,Y)$. 
\medskip

Assume that there exists a sequence $\nu_n \to 0$ such that the corresponding points $(t_n,X_n)$ and $(s_n,Y_n)$ 
are such that $t_n=0$ or $s_n=0$. If $(X_0,Y_0)$ is an accumulation point of $(X_n,Y_n)$, we have 
\[ 0 < \frac{M}2 \le u_0 (X_0) - u_0(Y_0) \le \omega_0 (d(X_0,Y_0)) \le \omega_0 (\omega (\eps))\]
where $\omega_0$ is the modulus of continuity of $u_0$. This implies a contradiction by choosing $\eps$ small. 
\medskip

We conclude that for $\nu$ small enough, we have $t>0$ and $s>0$ and that we can write two viscosity inequalities. 
\begin{align*}
\frac\eta{T^2} + \frac{t-s}\nu + H (X, G_X (\eps^{-1}X,\eps^{-1}Y) + \alpha d (X_0,X)) \le 0 \\
 \frac{t-s}\nu + H (Y, -G_Y (\eps^{-1}X,\eps^{-1}Y)) \le 0 
\end{align*}
where we abuse notation by writing $\alpha d(X_0,X)$. Substracting
these inequalities and using \eqref{eq:shorthandbis}, we get
\[ \frac\eta{T^2} \le H (X,G_X (\eps^{-1}X,\eps^{-1}Y)) - H (X,G_X (\eps^{-1}X,\eps^{-1}Y)+\alpha d(X_0,X)) +
\omega_{C_{K_\eps}} (\gamma C_{K_\eps}) 
\]
where $K_\eps = \eps^{-1} \omega(\eps)$. Letting $\alpha \to 0$, we get from \eqref{eq:alpha} that 
$\alpha d (X_0,X) \to 0$ and letting $\gamma \to 0$, we get $\omega_{C_{K_\eps}} (\gamma C_{K_\eps}) \to 0$. 
These limits  imply the following contradiction $\frac\eta{T^2} \le 0$. 
\end{proof}

\appendix

\section{Relaxed  solutions, effective junction conditions and Ishii solutions}
\label{sec:appendix}

This appendix contains additional results about another notion of
viscosity solutions on a multi-dimensional junction, relaxed
solutions. As explained in \cite{im}, it is easy to construct relaxed
solutions (Theorem~\ref{th:existence} below) while it is possible to
prove uniqueness of flux-limited ones
(Theorem~\ref{th:comparison}). These notions turn out to coincide:
relaxed solutions associated to a flux function $F$ coincide with
flux-limited ones for a flux limiter only depending on the $H_i$ and
$F$ (Theorem~\ref{th:class}). Minimal and maximal Ishii solutions are
also identified (Proposition~\ref{prop:fl-is}).

The main reason for putting such results in appendix is that they are
expected from the one-dimensional setting and/or their proofs
are very similar to the one-dimensional setting.

\subsection{Relaxed solutions on a multi-dimensional junction}
\label{s.relaxed}

We consider Hamilton-Jacobi equations posed on $J$, associated with
general junction function $F: \R^d \times \R^N \to \R$,
\begin{equation}\label{eq:hj-f}
\left\{\begin{array}{lll}
u_t + H_i(Du)= 0  & t>0, X \in J_i\setminus \Gamma,\\
u_t + F(Du)=0   & t>0,  X \in \Gamma
\end{array}\right.
\end{equation}
subject to the initial condition
\begin{equation}\label{eq:ic}
u(0,X)=u_0(X) \quad \mbox{for}\quad X\in J.
\end{equation}
The second equation in \eqref{eq:hj-f} is referred to as \emph{the
  junction condition}.

As far as general junction conditions are concerned, we assume that the
junction function $F:\R^d \times \R^N \to \R$ satisfies 
\begin{equation}\label{assum:F}
\left\{\begin{array}{ll}
\textbf{(Continuity)} & F \in C(\R^d \times \R^N) \medskip \\
\textbf{(Monotonicity)} & \forall i, \; p_i \mapsto F(p',p_1,\dots,p_N)
 \text{ is non-increasing} 
\end{array}\right.
\end{equation}
and, in some important cases,
\begin{equation}\label{assum:F-convexity}
 \textbf{(Quasi-convexity)} \qquad  \forall \lambda, \; \{ p \in \R^d \times \R^N: F(p) \le \lambda \}
 \text{ is convex.}
\end{equation}
\begin{lem}
If the Hamiltonians satisfy \eqref{assum:H} and $A$ satisfies \eqref{assum:Aqc}, then $F_A$ 
defined in (\ref{eq:FA}) satisfies (\ref{assum:F}) and (\ref{assum:F-convexity}).
\end{lem}
\begin{proof}
  Condition~\eqref{assum:F} is clear since $A$ and $H_i^-$ are
  continuous and have the desired monotonicity property. As far as
  \eqref{assum:F-convexity} is concerned, we have to justify that 
  \begin{equation}\label{eq:hic}
 \{ (p',p_i) : H_i^-(p',p_i) \le \lambda \} \text{ is convex.}
\end{equation}
  Indeed, if this holds true then $F_A$ is the maximum of functions
  with convex sub-level sets and it thus also enjoys such a property.
In order to get \eqref{eq:hic}, we remark that the definition of $H_i^-$ implies 
that 
\[  \{ (p',p_i) : H_i^-(p',p_i) \le \lambda \} = \{ (p',p_i) : H_i(p',p_i) \le \lambda \} + 
\{ 0_{\R^d} \} \times [0,+\infty).\]
Since the sum of two convex sets is convex, we indeed have \eqref{eq:hic}.  
\end{proof}

\begin{defi}[Relaxed  solutions]\label{defi:relaxed}
  Assume the Hamiltonians satisfy \eqref{assum:H} and the flux
  function $F$ satisfies \eqref{assum:F}. Let $u:[0,T)\times J\to \R$
  be locally bounded.
\begin{enumerate}[i)]
\item We say that $u$ is an \emph{$F$-relaxed sub-solution}
  (resp. \emph{$F$-relaxed super-solution}) of \eqref{eq:hj-f} in $J_T$
  if for all test function $\varphi\in C^1(J_T)$ such that
\[ u^*\le \varphi \quad (\text{resp.}\quad u_*\ge \varphi) 
\quad \text{in a neighborhood of $(t_0,X_0)\in J_T$}\]
with equality at $(t_0,X_0)$ for some $t_0>0$, we have
\[\varphi_t + H_i(D\varphi) \le 0  \quad (\text{resp.}\quad \ge 0) 
\quad \text{at } (t_0,X_0)\]
 if $X_0 \in J_i^*$, and 
\[\left.\begin{array}{lll}
\text{either } & 
\varphi_t + F(D\varphi) \le 0  &\quad (\text{resp.}\quad \ge 0)  \\
\text{or } &
\varphi_t + H_i(D \varphi) \le 0 & \quad (\text{resp.}\quad
\ge 0) \quad \text{ for some } i
\end{array} \right| 
\quad \text{at } (t_0,X_0)\]
 if $X_0 \in \Gamma$.
\item We say that $u$ is an \emph{$F$-relaxed solution} of
  \eqref{eq:hj-f} if $u$ is both an $F$-relaxed sub-solution of
  \eqref{eq:hj-f} and an $F$-relaxed super-solution of
  \eqref{eq:hj-f}.
\end{enumerate}
\end{defi}
We observe that any $A$-flux-limited solution of \eqref{eq:hj-FA} is also 
a $F_A$-relaxed solution of \eqref{eq:hj-FA}. The following proposition 
asserts that the converse is also true. 
\begin{pro}[Relaxed and flux-limited solutions coincide for
  flux-limited junction conditions]
\label{pro::1}
Assume the Hamiltonians satisfy \eqref{assum:H} and consider a
continuous flux limiter $A$. If $F=F_A$, then relaxed
(sub-/super-)solutions of \eqref{eq:hj-FA} are flux-limited
(sub-/super-)solutions of \eqref{eq:hj-FA}.
\end{pro}
\begin{proof}
We treat successively the super-solution case and the sub-solution
case. 

Let $u$ be a relaxed super-solution and let us assume by contradiction
that there exists a test function $\varphi$ touching $u_*$ from below at
$P_0=(t_0,X_0)$ for some $t_0\in (0,T)$ and $X_0 \in \Gamma$, such that
\begin{equation}\label{eq::11}
\varphi_t + F_A(D\varphi) <0 \quad \text{at}\quad P_0.
\end{equation}
Consider next the test function $\tilde{\varphi}$ satisfying
$\tilde{\varphi}\le \varphi$ in a neighborhood of $P_0$, with equality
at $P_0$ such that
\[\begin{array}{rl}
\tilde{\varphi}_t(P_0) &= \varphi_t(P_0) \\
D'\tilde{\varphi}(P_0) &= D'\varphi (P_0)
\end{array}
\quad \text{and}\quad 
\partial_i \tilde{\varphi}(P_0) = 
\min(\pi^0_i(D'\varphi(P_0)), \partial_i \varphi(P_0))
\quad \text{for}\quad i=1,...,N.\] Using the fact that
$F_A(D\varphi)=F_A(D\tilde{\varphi})\ge H_i^-(D'\tilde{\varphi},\partial_i
\tilde{\varphi}) = H_i(D'\tilde{\varphi},\partial_i \tilde{\varphi})$ at $P_0$ for all $i$, we
deduce a contradiction with \eqref{eq::11} using the viscosity
inequality satisfied by $\tilde{\varphi}$ for some $i \in \{1,\dots,N\}$.

Let now $u$ be a relaxed sub-solution and let us assume by
contradiction that there exists a test function $\varphi$ touching $u^*$
from above at $P_0=(t_0,X_0)$ for some $t_0\in (0,T)$ and $X_0 \in
\Gamma$, such that
\begin{equation}\label{eq::12}
\varphi_t + F_A(D\varphi) >0 \quad \text{at}\quad P_0.
\end{equation}
Let us define
\[I=\left\{i\in \left\{1,\dots,N\right\},\quad
H_i^-(D'\varphi,\partial_i \varphi) <
F_A(D\varphi) \quad \text{at}\quad P_0\right\}\]
and for $i\in I$, let $q_i\ge \pi_i^0(D'\varphi(P_0))$ be such that
\[H_i(D'\varphi(P_0),q_i)=F_A(D\varphi(P_0))\]
where we have used the fact that
$H_i(D'\varphi(P_0),+\infty)=+\infty$.  Then we can construct a test
function $\tilde{\varphi}$ satisfying $\tilde{\varphi}\ge \varphi$ in
a neighborhood of $P_0$, with equality at $P_0$, such that
\[\begin{array}{rl} \tilde{\varphi}_t(P_0) &= \varphi_t(P_0) \\
D' \tilde{\varphi} (P_0) & = D'\varphi (P_0)
\end{array} \quad \text{and}\quad 
\partial_i \tilde{\varphi}(P_0) = \left\{\begin{array}{ll}
\max(q_i, \partial_i \varphi(P_0)) &\quad \text{if}\quad i\in I,\\
\partial_i \varphi(P_0) &\quad \text{if}\quad i\not\in I.
\end{array}\right.\]
Using the fact that $F_A(D\varphi) = F_A(D\tilde{\varphi})\le
H_i(D'\tilde{\varphi},\partial_i \tilde{\varphi})$ at $P_0$ for all $i$, we deduce a
contradiction with \eqref{eq::12} using the viscosity inequality for
$\tilde{\varphi}$ for some $i \in \{1,\dots,N\}$.
\end{proof}
The notion of relaxed solutions given in the previous subsection is
chosen so that it enjoys good stability results; in particular,
existence follows by Perron's method \cite{I}. More details are given
in a more general setting in \cite{in}. 
\begin{theo}[Existence]\label{th:existence}
Let $T>0$.  Assume that Hamiltonians satisfy \eqref{assum:H}, that
the junction function $F$ satisfies \eqref{assum:F} and that the
initial datum $u_0$ is Lipschitz continuous in $J$. Then there exists a
relaxed  solution $u$ of \eqref{eq:hj-f}-\eqref{eq:ic} in
$[0,T)\times J$ and a constant $C_T>0$ such that
\[|u(t,X)-u_0(X)|\le C_T \quad \text{for all}\quad (t,X)\in [0,T)\times J.\]
Moreover $u$ is unique and continuous.
\end{theo}

\subsubsection{The ``weak continuity'' condition for sub-solutions}

If $F$ not only satisfies \eqref{assum:F}, but is also semi-coercive,
that is to say if
\begin{equation}\label{eq::r20}
F(p',p)\to +\infty \quad \mbox{as}\quad \min_i p_i\to -\infty 
\quad \mbox{for each}\quad p'\in \R^d
\end{equation}
then any $F$-relaxed sub-solution satisfies a ``weak continuity''
condition along the junction hyperplane. Such a result is used when
reducing the set of test functions.  
\begin{lem}[``weak continuity'' condition on the junction hyperplane]\label{lem::r21}
  Assume that the Hamiltonians satisfy \eqref{assum:H} and that $F$
  satisfies \eqref{assum:F} and \eqref{eq::r20}. Then any relaxed
  sub-solution $u$ of \eqref{eq:hj-f} satisfies the following ``weak
  continuity'' property
\begin{equation}\label{eq::r21}
u^*(t,X)=\limsup_{(s,Y)\to (t,X),\ Y\in J_i^*}u(s,Y) \quad \mbox{for all}\quad i=1,\dots,N,\quad \mbox{for all}\quad (t,X)\in (0,T)\times \Gamma
\end{equation}
where we recall that  $J_i^*=J_i\backslash \Gamma$.
\end{lem}
The proof of this result is a straightforward adaptation of the one of
Lemma 2.3 in \cite{im} in the case $d=0$.

As in \cite{im}, we will see that the ``weak continuity'' property is
an important condition to avoid pathological relaxed sub-solutions
(that do exist) when $F$ is not semi-coercive.  Moreover it turns out
that the notion of ``weak continuity'' is stable, as shown in the
following result.
\begin{pro}[Stability of the weak continuity property]\label{pro::r23}
Consider a family of Hamiltonians $H^\varepsilon_i$ satisfying  \eqref{assum:H}. We also assume that the coercivity of the Hamiltonians is uniform in $\varepsilon$. Let $u^\varepsilon$ be a family of subsolutions of 
$$u_t + H^\varepsilon_i(Du)=0 \quad \mbox{in}\quad (0,T)\times J_i^*$$
for all $i=1,\dots,N$, and that $u^\varepsilon$ satisfies the ``weak continuity'' property \eqref{eq::r21}.
If $\bar u=\limsup{}^* u^\varepsilon$ is everywhere finite, then $\bar u$ still satisfies the ``weak continuity'' property \eqref{eq::r21}.
\end{pro}
The proof of this result is also a straightforward adaptation of the
one of Proposition 2.6 in \cite{im} in the case $d=0$.

\subsubsection{A reduced set of test functions}

We recall that the function $H_i^+$ is defined by
\[H_i^+(p',p_i)=\begin{cases}
H_i(p',\pi_i^0(p'))  &\quad \mbox{if}\quad p_i < \pi_i^0(p'),\\
H_i(p',p_i) &\quad \mbox{if}\quad p \ge \pi_i^0(p')
\end{cases}\] 
and the functions  $\pi_i^\pm : \R^d \times \R \to \R$ are defined  for $\lambda\ge A_i(p')=\min H_i(p',\cdot)$ as
\begin{align*}
\pi_i^+ (p',\lambda) &= \inf \{  p_i : H_i (p',p_i) = H_i^+ (p',p_i) =
\lambda \} \\
  \pi_i^- (p',\lambda) &= \sup \{  p_i : H_i (p',p_i) = H_i^- (p',p_i) =
\lambda \}. 
\end{align*}
\begin{defi}[Reduced  solutions -- the flux-limited case]\label{defi:equivalent}
  Assume the Hamiltonians satisfy \eqref{assum:H} and consider a
  continuous flux limiter  $A: \R^d \to \R$ such that for all
  $p' \in \R^d$, \( A (p') \ge A_0 (p') .\) Given $u\colon[0,T)\times
  J\to \R$ locally bounded, the function $u$ is a \emph{reduced
    sub-solution} (resp. \emph{reduced super-solution}) of \eqref{eq:hj-f}
  with $F=F_A$ in $J_T$ if and only if $u$ is a sub-solution
  (resp. super-solution) outside $\Gamma$ and for all test function
  $\varphi\in C^1(J_T)$ touching $u$ from above at $(t_0,X_0) \in
  (0,+\infty) \times \Gamma$, of the following form
\[ \varphi (t,x',x) = \phi (t,x') + \phi_0(x) \]
with 
\[\begin{cases}\phi \in C^1 ((0,+\infty) \times \R^d)\\
 D'\phi (t_0,x'_0) = p'_0\end{cases}
\quad 
\begin{cases}\phi_0 \in C^1 (\R) \\
\partial_i \phi_0(0) = \pi_i^+
 (p'_0,A(p'_0))
\end{cases}
\]
we have
\[\varphi_t + F_A(D\varphi) \le 0  \quad (\text{resp.}\quad \ge 0).\]
\end{defi}
\begin{pro}[Equivalence of Definitions~\ref{defi:fl} and
  \ref{defi:equivalent} under ``weak continuity'']\label{pro::r29}
  Every reduced super-solution (resp. subsolution) $u$ in the sense of
  Definition ~\ref{defi:fl} is also, for Definition
  \ref{defi:equivalent}, a flux-limited super-solution (resp. a
  flux-limited subsolution if $u$ satisfies moreover the
  "weak-continuity" property \eqref{eq::r21}).
\end{pro}
\begin{proof}
  It is clear that flux-limited sub-solutions (resp. super-solutions)
  are reduced sub-solutions (resp. reduced super-solutions). To prove
  that the converse holds true, we proceed as in \cite{im} by
  considering critical slopes in $x$. Precisely, it is enough to prove
  the following lemmas.
\begin{lem}[Critical slopes for super-solutions]\label{lem:critical}
Let $u$ be a super-solution of \eqref{eq:hj-FA} away from $\Gamma$ and let $\varphi$
touch $u_*$ from below at $P_0 =(t_0,X_0)$ with $X_0 \in \Gamma$. Then the ``critical
slopes'' defined as follows
\[ \bar p_i = \sup \{ \bar p \in \R_+: \exists r>0, \varphi (t,X) + \bar p x \le u_*
(t,X) \text{ for } (t,X) \in B_r (P_0) \cap \left((0,+\infty) \times J_i \right)\} \]
satisfy for all $i=1,\dots, N$, 
\[ \varphi_t (P_0) + H_i (D'\varphi (P_0), \partial_i \varphi (P_0) +
\bar p_i) \ge 0,\]
with the convention for $\bar p_i=+\infty$, that $H_i(p',+\infty)=+\infty$.
\end{lem}
\begin{lem}[Critical slopes for sub-solutions]\label{lem:criticalsub}
Let $u$ be a sub-solution of \eqref{eq:hj-FA} away from $\Gamma$ and let $\varphi$
touch $u^*$ from above at $P_0 =(t_0,X_0)$ with $X_0 \in \Gamma$. Then the ``critical
slopes'' defined as follows
\[ \underline p_i = \inf \{ \bar p \in \R_-: \exists r>0, \varphi (t,X) + \bar p x \ge u^*
(t,X) \text{ for } (t,X) \in B_r (P_0) \cap \left((0,+\infty) \times J_i\right) \} \]
satisfy for all $i=1,\dots, N$, 
\[ \varphi_t (P_0) + H_i (D'\varphi (P_0), \partial_i \varphi (P_0) +
\underline p_i) \le 0\quad \mbox{if}\quad \underline p_i>-\infty.\]
Moreover, we have
$$\underline p_i>-\infty\quad\mbox{for each}\quad i=1,\dots,N$$
if $u$ satisfies the ``weak continuity'' property \eqref{eq::r21}.
\end{lem}
\begin{rem}\label{rem::r25}
  Even if Lemma \ref{lem:criticalsub} is not stated this way, a close
  look at its proof shows that it is sufficient to have the ``weak
  continuity'' property pointwise at $(t_0,X_0)$ and on a single
  branch $J_i^*$ to prove that $\underline p_i>-\infty$ for the same index
  $i$.
\end{rem}

The proofs of these lemmas are straightforward adaptations of the
corresponding ones in \cite{im} so we skip them. The remainder of the
proof is also analogous and we also skip it. 
\end{proof}

\subsection{Effective junction conditions}
\label{s23}

\begin{defi}[Effective flux limiter $A_F$]\label{rem:def-AF}
  Let $p_i^0 \ge \pi_i^0(p')$ be minimal such that
  $H_i (p',p_i) = A_0$ and let $p^0$ denote $(p_1^0,\dots,p_N^0)$.
  The function $A_F$ is referred to as the \emph{effective flux
    limiter} and is defined as follows: for each $p' \in \R^d$, if
  $F(p',p^0) \le A_0(p')$, then $A_F(p')=A_0(p')$, else $A_F(p')$ is
  the only $\lambda \in \R$ such that
  $\lambda \ge A_0(p')= \max_i A_i (p')$ and there exists
  $p_i^+ \ge p_i^0$ such that
\[ H_i (p', p_i^+) = F (p', p^+ ) = \lambda \]
where $p^+ = (p_1^+,\dots,p_N^+).$ 
\end{defi}
\begin{rem}
  Notice that if $F$ satisfies \eqref{assum:F} then $\lambda$ is
  unique. But $p^+$ may be not unique.
\end{rem}
\begin{theo}[General junction conditions reduce to flux-limited ones]\label{th:class}
Let the Hamiltonians satisfy \eqref{assum:H} and let $F:\R^N\to \R$
satisfy \eqref{assum:F}. There exists a unique coercive continuous
function $A_F:\R^d \to \R$, 
satisfying $A_F\ge A_0$ with $A_0$ defined in \eqref{eq:A0},  such that the following holds.
\begin{enumerate}[i)]
\item Every $F$-relaxed super-solution (resp. sub-solution satisfying
  moreover the ``weak continuity'' property \eqref{eq::r21}) of
  \eqref{eq:hj-f} is a $A_F$-flux-limited super-solution
  (resp. sub-solution) of \eqref{eq:hj-FA}.
\item Conversely, every $A_F$-flux-limited super-solution (resp. sub-solution) of \eqref{eq:hj-FA},
is a $F$-relaxed  super-solution (resp. sub-solution) of \eqref{eq:hj-f}.
\item If $F$ is quasi-convex, so is $A_F$. 
\end{enumerate}
\end{theo}
\begin{proof}
  With the notation of Remark~\ref{rem:def-AF} in hand, we first
  recall that if $F(p',p^0)\ge A_0(p')$, then
  there exists only one $\lambda \ge A_0(p')$ such that there
  exists $p^+ = (p_1^+,\dots,p_N^+)$ with $p_i^+ \ge p_i^0$ such that 
\[ H_i(p',p^+_i) = F(p',p^+) = \lambda. \]

The coercivity of $A_F$ is a direct consequence of the fact that $A_F
\ge A_0$. We thus prove next that $A_F$ is continuous. Consider a
sequence $(p'_n)_n$ converging towards $p'$. Then we have two cases.

\textsc{Case 1.}
There exists $p^+_n
=(p^+_{1,n}, \dots, p^+_{N,n})$ with $p_{i,n}^+ \ge p_i^0=p_i^0(p_n')$ such
that
\begin{equation}\label{eqn} 
H_i (p'_n,p^+_{i,n}) = F(p'_n,p^+_n) = A_n =A_F (p'_n)\ge A_0(p_n') \quad \mbox{if}\quad F(p_n',p^0(p_n'))\ge A_0(p_n').
\end{equation}
 We can pass to the limit in  \eqref{eqn} and get
\[ H_i (p',p^+_i) = F(p',p^+) = A \ge A_0(p')\]
with $p_i^+\ge p_i^0(p')$
and then $A = A_F (p')$.

\textsc{Case 2.}
$$A_n=A_0(p_n')=A_F (p'_n) \quad \mbox{if}\quad F(p_n',p^0(p_n'))\le A_0(p_n').$$
 We first claim that $(p^+_{i,n})_n$ is
bounded. Indeed, if not, then $A_n \to +\infty$ and, for $n$ large
enough, 
\[ F(p'_n,p^0(p_n')) \ge A_n \]
which is impossible. The claim also implies that $(A_n)_n$ is also
bounded. Consider now two converging subsequences, still denoted by
$(p_n')_n$ and $(A_n)_n$, and let $p'$ and $A$ be their limits.
We get
\[A=A_0(p')\]

If $F(p',p^0(p'))\le A_0(p')$,  then $A_F(p')=A_0(p')=A$.

If $F(p',p^0(p'))> A_0(p')$, then we have to enter in more details in
the results of the limit process.  We get
\[
F(p',\bar p^0)\le A_0(p') \quad \mbox{and}\quad A=A_0(p')=H_i(p',\underline p_i^0) 
\quad \mbox{where}\quad \underline p_i^0\ge \pi_i^0(p')
\]
with
$$\bar p^0 = \lim \ p^0(p_n') \quad \mbox{for a subsequence}$$
which implies $\underline p_i^0 \ge p_i^0(p')$. Then we can choose
some $p_i^+\in [p_i^0(p'),\underline p_i^0]$ such that
$$H_i(p',p_i^+)=F(p',p^+)=A_0(p')=A$$
which shows again that $A_F(p')=A$.
This ends the proof that $A_F$ is contiuous.
\medskip

\noindent {\it Proof of i).} We only do the proof for sub-solutions
since the proof for super-solutions follows along the same lines.  Let
$\varphi$ be a test function touching $u^*$ from above at
$P_0=(t_0,X_0)$. We only need to consider the case where
$X_0 \in \Gamma$. From Proposition \ref{pro::r29}, we can also assume
that
\[ \varphi (t,X) = \phi (t,x') + \phi_0(x) \]
with 
\[ D'\phi (t_0,x'_0) = p'_0 \quad \text{ and } \quad \partial_i \phi_0
(0) = \pi_i^+ (p'_0,A_F (p'_0)).\]
We have 
\[ \varphi_t (P_0) + \min (F(D \varphi (P_0)), \min_i H_i
(D' \varphi(P_0),\partial_i\varphi(P_0)) \le 0 \]
which yields
\[ \varphi_t (P_0) + \max (F(p'_0,\pi^+(p'_0,A_F(p'_0))), A_F (p'_0))
\le 0. \]
In view of the definition of $A_F$, we get
\[ \varphi_t (P_0) + A_F (p'_0) \le 0.\]
Now compute 
\[ F_{A_F} (D \varphi (P_0)) = \max (A_F(p'_0), \max_i H_i^-
(p'_0,\pi_i^+ (p'_0,A_F (p'_0))) = A_F(p'_0). \]
This ends the proof of i).

\noindent {\it Proof of ii).} We only do the proof for super-solutions
since the proof for sub-solutions follows along the same lines. Let
$\varphi$ be a test function touching $u_*$ from below at
$P_0=(t_0,X_0)$.  We want to show that it is a $F$-relaxed 
supersolution, i.e.
\begin{equation}\label{eq::r32}
\max(F(D \varphi (P_0)), \max_i H_i(D' \varphi(P_0),\partial_i\varphi(P_0)) \ge \lambda := -\varphi_t(P_0).
\end{equation}
We set
$$D \varphi (P_0)=(p_0',p) \quad \mbox{with}\quad p=(p_1,\dots,p_N).$$
We know that $u$ is a $F_{A}$-reduced  solution with $A=A_F$, i.e.
\begin{equation}\label{eq::r33}
\max(A_F(p_0'), \max_i H_i^-(p_0',p_i))=F_{A_F}(D \varphi (P_0))\ge \lambda.
\end{equation}
Moreover, we have
\begin{equation}\label{eq::r30}
F(p_0',\pi^+(p_0',A_F(p_0')))=A_F(p_0')> A_0(p_0')
\end{equation}
or
\begin{equation}\label{eq::r31}
A_F(p_0')= A_0(p_0').
\end{equation}
We now distinguish two cases.

\textsc{Case 1.} 
Assume first that there exists an index $i_0$ such that 
\[ 
H_{i_0}(p_0',p_{i_0})\ge \max (A_F(p_0'), \displaystyle \max_i H_{i}(p_0',p_{i})).
\]
Then \eqref{eq::r33} implies the result \eqref{eq::r32}.

\textsc{Case 2.}
Assume that for all $i$, we have $H_i(p_0',p_i)< A_F(p_0')$. Then $p_i< \pi_i^+(p_0',A_F(p_0'))$ and
$F(p_0',p_i)\ge F(p_0',\pi^+(p_0',A_F(p_0'))=A_F(p_0')\ge \lambda$ in case of \eqref{eq::r30}.

In the case of  \eqref{eq::r31}, we have $A_F(p_0')=A_0(p_0')$ and the inequality for all $i$
$$H_i(p_0',p_i)< A_F(p_0')=A_0(p_0')=\max_j \left(\min_{q_j} H_j(p_0',q_j) \right)$$
leads to a contradiction.
The proof of ii) is now complete.
\medskip

\noindent {\it Proof of iii).} It follows from Proposition \ref{pro:A-convex} below.
The proof is now complete.
\end{proof}

We now turn to the following useful proposition.
\begin{pro}[Quasi-convex effective flux limiters]\label{pro:A-convex}
  If the Hamiltonians $H_i$ satisfy \eqref{assum:H} and the flux
  function $F$ satisfies \eqref{assum:F}-\eqref{assum:F-convexity},
  then $A_F$ is continuous, quasi-convex and coercive.
\end{pro}
Before proving Proposition~\ref{pro:A-convex}, we state and prove the
following elementary lemma. 
\begin{lem}[Quasi-convexity of the functions $A_i$]\label{lem:ai-convex}
  If the Hamiltonians $H_i$ are quasi-convex (resp. convex), continuous and coercive,
  so are the functions $A_i$ defined in (\ref{eq:A0}). In particular, $A_0 = \max_i A_i$ is
  quasi-convex (resp. convex), continuous and coercive.
\end{lem}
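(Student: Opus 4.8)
The plan is to treat each $A_i$ as a \emph{marginal (partial-minimization) function} of $H_i$, exploiting coercivity to control where the minimum in the variable $p_i$ lives, and then to obtain the properties of $A_0=\max_{i=1,\dots,N}A_i$ by the elementary stability of convexity, quasi-convexity, continuity and coercivity under finite maxima. First I would record the basic fact underlying everything: since $H_i$ is continuous and coercive, for each fixed $p'\in\R^d$ the function $p_i\mapsto H_i(p',p_i)$ attains its minimum, so $A_i(p')\in\R$ is well defined and finite (bounded below by $\min H_i$ and above by $H_i(p',0)$); moreover, if $p'$ ranges over a bounded set and $H_i(p',p_i)$ stays bounded, then $|(p',p_i)|$ is bounded, so the minimizers stay in a fixed compact set.

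For quasi-convexity of $A_i$, I would observe that the sublevel set $\{A_i\le\lambda\}$ is \emph{exactly} the image of $\{H_i\le\lambda\}\subset\R^{d+1}$ under the linear projection $(p',p_i)\mapsto p'$: the inclusion $\supseteq$ is immediate, and $\subseteq$ follows from the attainment of the minimum. Since $\{H_i\le\lambda\}$ is convex and the image of a convex set under a linear map is convex, every sublevel set of $A_i$ is convex, i.e. $A_i$ is quasi-convex. In the convex case I would argue directly instead: given $p',q'\in\R^d$ with minimizers $p_i,q_i$ and $t\in[0,1]$, joint convexity of $H_i$ on $\R^{d+1}$ gives $A_i(tp'+(1-t)q')\le H_i\bigl(t(p',p_i)+(1-t)(q',q_i)\bigr)\le tA_i(p')+(1-t)A_i(q')$.

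For continuity of $A_i$ I would split into the two semicontinuities. Upper semicontinuity: if $p'_n\to p'$ and $p_i^\star$ minimizes $H_i(p',\cdot)$, then $A_i(p'_n)\le H_i(p'_n,p_i^\star)\to H_i(p',p_i^\star)=A_i(p')$. Lower semicontinuity: if $p'_n\to p'$, choose minimizers $p_{i,n}$; passing to a subsequence realizing $\liminf_n A_i(p'_n)=:L$ (the case $L=+\infty$ being trivial), boundedness of $p'_n$ and of $H_i(p'_n,p_{i,n})=A_i(p'_n)$ forces, by coercivity of $H_i$, boundedness of $p_{i,n}$; extracting $p_{i,n}\to\bar p_i$ and using continuity of $H_i$ yields $L=H_i(p',\bar p_i)\ge A_i(p')$. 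Coercivity of $A_i$ is then immediate: if $|p'_n|\to\infty$ and $p_{i,n}$ are minimizers, then $|(p'_n,p_{i,n})|\to\infty$, so $A_i(p'_n)=H_i(p'_n,p_{i,n})\to+\infty$.

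Finally, for $A_0=\max_{i=1,\dots,N}A_i$: a finite maximum of convex functions is convex, a finite maximum of quasi-convex functions is quasi-convex (because $\{A_0\le\lambda\}=\bigcap_i\{A_i\le\lambda\}$ is an intersection of convex sets), a finite maximum of continuous functions is continuous, and $A_0\ge A_i$ together with coercivity of any one $A_i$ gives coercivity of $A_0$. The only genuinely non-formal point in the whole argument is ensuring that the minimum over $p_i$ is attained and that the minimizers stay in a compact set as $p'$ varies over a compact set; this is precisely where continuity plus coercivity of $H_i$ enters, and once it is in place every step above is routine.
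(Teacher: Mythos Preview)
Your proof is correct and follows essentially the same approach as the paper: for quasi-convexity, the paper picks minimizers $p_i,q_i$ and uses convexity of $\{H_i\le\lambda\}$ to conclude $A_i(tp'+sq')\le H_i(tp'+sq',tp_i+sq_i)\le\lambda$, which is exactly your projection argument unwound pointwise. The paper skips continuity and coercivity as ``simpler'', whereas you spell them out carefully; your treatment of these is correct and adds no new ideas beyond the standard compactness-of-minimizers argument you describe.
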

\begin{proof}
  We only address the question of the quasi-convexity of the functions
  $A_i$ since their continuity and  coercivity are simpler.

Consider $p'$ and $q'$ such that $A_i(p') \le \lambda$ and $A_i(q')
\le \lambda$ for some $\lambda \in \R$. There exists $p_i,q_i \in \R$
such that 
\[ A_i(p') = H_i (p',p_i) \quad A_i(q') = H_i (q',q_i).\]
Then $(p',p_i),(q',q_i) \in \{ H_i \le \lambda \}$ and we conclude
from the convexity of $\left\{H_i\le \lambda\right\}$ that for $t,s \ge 0$ with $t+s=1$, 
\[ A_i (tp'+sq') \le H_i (tp'+sq',tp_i + s q_i)  
\le \lambda.\]
This achieves the proof of the lemma.
\end{proof}

\begin{proof}[Proof of Proposition~\ref{pro:A-convex}]
  We assume that the Hamiltonians $H_i$ are convex,
  $p_i \mapsto H_i(p',p_i)$ is increasing in $[\pi^0_i(p'),+\infty)$
  and decreasing in $(-\infty,\pi^0_i(p')]$ and $F$ is convex in all
  variables and $p\mapsto F(p',p)$ is decreasing in each variable for
  every $p'$ fixed. In particular, the functions $\pm \pi_i^\pm$ are
  concave.  The general case follows by an approximation argument and
  by remarking that it is enough to find $\beta$ increasing such that
  $\beta \circ F$ and $\beta \circ H_i$ satisfy the previous
  assumptions (see Lemma \ref{lem::1}).

We now prove that 
\[ G(p',\lambda) = F (p',\pi^+(p',\lambda)) \]
is convex w.r.t. $(p',\lambda) \in \epi A_0$. 
For $(p',\lambda), (q',\mu) \in \epi A_0$ and $t,s \ge 0$ with
$t+s=1$, we can use the monotonicity of $F$ together with the
concavity of $\pi_i^+$ (see Lemma~\ref{lem:pi-pm}) to get
\begin{align*}
  t G(p',\lambda) + s G(q',\mu) & \ge F(tp'+sq', t\pi^+(p',\lambda)+s
  \pi^+(q',\mu)) \\
& \ge F(tp'+sq', \pi^+(tp'+sq',t\lambda+s \mu)) \\
& = G(tp'+sq',t\lambda +s \mu). 
\end{align*}
Similarly, we can see that $G$ is non-increasing with respect to
$\lambda$. 

We next remark that
\[ A_F (p') = G (p',A_F(p')) \]
and for $p',q' \in \R^d$ and $t,s \ge 0$ with $t+s=1$, we can write 
\begin{align*}
t A_F (p') + s A_F (q') & = t G(p',A_F(p')) + s G(q',A_F(q')) \\
& \ge G (tp'+sq', t A_F(p')+ sA_F (q'))
\end{align*}
and 
\[ A_F(tp'+sq') = G(tp'+sq',A_F(tp'+sq')). \]
We thus deduce from the monotonicity of $G$ in $\lambda$ that 
\[ A_F (tp'+sq') \le t A_F(p')+ s A_F (q').\]
The proof is now complete.
\end{proof}

\subsection{Minimal/maximal Ishii solutions}
\label{sec:ishii}

In this section, we extend the study of Ishii solutions started in
\cite{im} to a multi-dimensional setting. The proofs are straightforward
extensions of the one contains in \cite{im} but we provide them for the
sake of completeness. 

We are interested in the following Hamilton-Jacobi equations posed in $\R^{d+1}$
\begin{equation}\label{eq:hj-md}
\begin{cases}
U_t +  H_L (DU) = 0, & t>0, X=(x',x_{d+1}), x_{d+1}<0, \\
U_t + H_R (DU) = 0, & t>0, X=(x',x_{d+1}), x_{d+1} >0.
\end{cases}
\end{equation} 
We recall that Ishii solutions are viscosity solutions of
\eqref{eq:hj-md} in $\R^{d+1} \setminus \{ x_{d+1}=0\}$ such that,
\begin{equation}\label{eq:cond-ishii}
\begin{cases}
U_t + \max(H_L(DU),H_R(DU)) \ge 0, & t>0, x_{d+1}=0 \\
U_t + \min (H_L(DU),H_R(DU)) \le 0, & t>0, x_{d+1}=0
\end{cases}
\end{equation}
(in the viscosity sense).  The Hamilton-Jacobi
equation~\eqref{eq:hj-md} posed in $\R^{d+1}$ is naturally associated
with another HJ equation posed on a multi-dimensional junction with
$N=2$ ``branches'' (or ``sheets''). Indeed, if we define for $(x',x_i) \in J_i$, 
\begin{equation}\label{eq:U2u}
 u (t,(x',x_i)) = \begin{cases}
U(t,(x',-x_i)) & \text{ if } i=1, \\
U (t,(x',x_i)) & \text{ if } i=2,
\end{cases}
\end{equation}
then $u$ is a solution of \eqref{eq:hj-f} in $J \setminus \Gamma$ with  
\begin{equation}\label{eq:Ham}
 H_1 (p',p_1) = H_L(p',-p_1) \quad \text{ and } \quad H_2 (p',p_2) = H_R (p',p_2).
\end{equation}
Conversely, if $u$ is a solution of \eqref{eq:hj-f} posed in $J$ with $N=2$, and 
$u^i$ denotes $u_{|(0,T) \times J^i}$, then the function $U$ defined by
\begin{equation}\label{eq:u2U}
 U(t,(x',x_{d+1})) = \begin{cases} u^1 (t,(x',-x_{d+1})) & \text{ for } x_{d+1} < 0 \\
u^2 (t,(x',x_{d+1})) & \text{ for } x_{d+1} >0 
\end{cases}
\end{equation}
satisfies \eqref{eq:hj-md} in $\R^{d+1}$.

\begin{pro}[Minimal/maximal Ishii solutions in the Euclidian setting]\label{prop:fl-is}
  The maximal (resp. minimal) Ishii solution $U^\pm$ of \eqref{eq:hj-md} corresponds to the
  $A_I^\mp$-flux-limited solution $u^\pm$ of \eqref{eq:hj-f} with Hamiltonians given by 
\eqref{eq:Ham} and 
\begin{align*}
 A_I^+ (p')&= \max (A_0(p'), A^* (p')) \\
A_I^- (p') & = 
\begin{cases}
 A_I^+ (p') & \text{ if } \pi_R^0 (p') < \pi_L^0(p') \\
 A_0 (p') & \text{ if } \pi_R^0 (p') \ge \pi_L^0(p').
\end{cases}
\end{align*}
where
\[ A^* (p')= \max_{p_{d+1} \in [\pi_L^0 (p')\wedge \pi_R^0(p'),\pi_L^0 (p')\vee \pi_R^0(p')]} H_R (p',p_{d+1}) \wedge H_L (p',p_{d+1}).\]
\end{pro}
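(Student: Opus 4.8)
The plan is to transfer the one-dimensional analysis of minimal/maximal Ishii solutions from \cite{im} to the present multi-dimensional setting, exploiting the fact that the extra variable $p' \in \R^d$ plays the role of a parameter. First I would recall the dictionary \eqref{eq:U2u}--\eqref{eq:u2U} between solutions $U$ of \eqref{eq:hj-md} on $\R^{d+1}$ and solutions $u$ of \eqref{eq:hj-f} on the two-branch junction $J$, with Hamiltonians $H_1,H_2$ given by \eqref{eq:Ham}; under this dictionary the Ishii conditions \eqref{eq:cond-ishii} at $\{x_{d+1}=0\}$ become exactly the relaxed viscosity junction conditions of Definition~\ref{defi:relaxed} at $\Gamma$ with $N=2$ (the $\max$/$\min$ over $H_L,H_R$ becoming $\max$/$\min$ over $H_1,H_2$). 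So Ishii solutions of \eqref{eq:hj-md} are precisely relaxed viscosity solutions of \eqref{eq:hj-f} with the trivial ``no constraint'' junction function, i.e.\ $F(p',p_1,p_2) = \min(H_1(p',p_1),H_2(p',p_2))$ for super-solutions and the dual statement for sub-solutions. Since this $F$ is continuous and non-increasing in $p_1,p_2$, it satisfies \eqref{assum:F}, so Theorem~\ref{th::r1} applies and produces a flux limiter $A_F$ such that relaxed solutions of \eqref{eq:hj-f} coincide with $A_F$-flux-limited solutions of \eqref{eq:hj-FA}.

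Next I would identify $A_F$ explicitly in the two cases. The key point is that among all Ishii solutions, the maximal one is obtained by imposing the least restrictive flux limiter compatible with being an Ishii \emph{sub}-solution, and the minimal one by the least restrictive limiter compatible with being a super-solution; equivalently, for each fixed $p'$ one runs the one-dimensional computation of \cite{im} with $H_L(p',\cdot), H_R(p',\cdot)$ in place of the scalar Hamiltonians. This is where the formulas come from: $A_0(p') = \max(A_1(p'),A_2(p')) = \max(\min_p H_L(p',p), \min_p H_R(p',p))$ is the smallest limiter for which the $F_A$-problem is well posed, and $A^*(p')$ is the value obtained by ``connecting'' the decreasing branch of one Hamiltonian to the increasing branch of the other — precisely $\max_{p_{d+1}\in[\pi_L^0\wedge\pi_R^0,\ \pi_L^0\vee\pi_R^0]} (H_R\wedge H_L)(p',p_{d+1})$. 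One then checks, pointwise in $p'$, that $A_I^+(p') = \max(A_0(p'),A^*(p'))$ is the flux limiter $A_F$ associated with the super-solution-side $F$ (giving $U^+$), while for $A_I^-$ one distinguishes according to the sign of $\pi_R^0(p') - \pi_L^0(p')$: when $\pi_R^0 < \pi_L^0$ the ``overlap interval'' is genuinely nonempty on the relevant side and $A_I^- = A_I^+$, whereas when $\pi_R^0 \ge \pi_L^0$ no extra limiting occurs and $A_I^- = A_0$. Each of these pointwise identities is exactly Proposition-level content from \cite{im} (the case $d=0$), so I would invoke it branch-by-branch and then note that the continuity of $A_I^\pm$ in $p'$ — hence the fact that these are admissible continuous flux limiters — follows from the continuity of $A_0$ (Lemma~\ref{lem:ai-convex}) and of $\pi_L^0,\pi_R^0,A^*$, which in turn follows from the continuity assumption \eqref{assum:H}.

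Finally I would close the argument by combining the flux-limiter identification with the comparison principle: having shown $U^\pm \leftrightarrow u^\pm$ are $A_I^\mp$-flux-limited solutions, Theorem~\ref{th:comparison} (applicable since $A_I^\mp \ge A_0$ and, when $A_I^\mp$ is quasi-convex, which holds here because $F$ is quasi-convex so Proposition~\ref{pro:A-convex}/Theorem~\ref{th::r1}(iii) applies) shows $u^\pm$ is the \emph{unique} such solution, hence $U^\pm$ is the unique $A_I^\mp$-flux-limited solution; maximality/minimality among \emph{all} Ishii solutions then follows from the ordering of flux limiters, namely that a smaller flux limiter yields a larger solution, which one gets from the comparison argument applied between an arbitrary Ishii solution and $u^\pm$ exactly as in \cite{im}. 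The main obstacle I anticipate is the explicit pointwise identification of $A_F$ with $A_I^\pm$, and in particular the careful case analysis for $A_I^-$ according to the position of $\pi_L^0(p')$ relative to $\pi_R^0(p')$ — this is the place where one must reproduce the one-dimensional geometry of \cite{im} faithfully and verify that the $p'$-dependence does not introduce any new degeneracy; everything else is a routine transcription of the scalar theory with $p'$ frozen as a parameter.
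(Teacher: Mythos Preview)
Your overall philosophy---that the formulas for $A_I^\pm$ are the $p'$-parametrized versions of the one-dimensional ones in \cite{im}---is correct, but the mechanism you propose for the proof has a genuine gap. You try to fit the Ishii conditions into the framework of Theorem~\ref{th::r1} by declaring $F(p',p_1,p_2)=\min(H_1(p',p_1),H_2(p',p_2))$ and then asserting that this $F$ satisfies \eqref{assum:F}. It does not: $p_i\mapsto H_i(p',p_i)$ is not monotone, so a minimum of such functions is not non-increasing in $p_i$. More fundamentally, the Ishii framework is asymmetric---a $\min$ for sub-solutions and a $\max$ for super-solutions---so it cannot arise from a single junction function $F$, and a single application of Theorem~\ref{th::r1} can only produce one limiter $A_F$, not two different ones $A_I^+$ and $A_I^-$. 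Your later appeal to Proposition~\ref{pro:A-convex} for quasi-convexity of $A_I^\pm$ fails for the same reason.

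The paper bypasses Theorem~\ref{th::r1} entirely and instead proves four direct implications at the level of test functions: (i)~every $F_{A_I^\pm}$-flux-limited sub-solution is an Ishii sub-solution; (ii)~the same for super-solutions; (iii)~every Ishii sub-solution is an $A_I^-$-flux-limited sub-solution; (iv)~every Ishii super-solution is an $A_I^+$-flux-limited super-solution. For (i)--(ii) one takes a $C^1$ test function and checks the Ishii inequalities by elementary case analysis on the position of $p=\partial_{d+1}\phi$ relative to $\pi_L^0(p'),\pi_R^0(p')$. For (iii)--(iv) one first invokes a weak-continuity lemma (Lemma~\ref{lem:wc-c1}) to justify using the reduced class of test functions from Proposition~\ref{pro::r29}, and then one chooses the slopes $\phi_0'(0\pm)$ explicitly (equal to $\pi_L^-(p',A),\pi_R^+(p',A)$, which in the critical case $A=A^*(p')$ collapse to a common value $p^*$, producing a genuinely $C^1$ test function on $\R^{d+1}$ to which the Ishii inequality applies). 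This is where the actual multi-dimensional work happens: one cannot simply ``freeze $p'$ and apply the 1D result'', because viscosity inequalities are not pointwise in $p'$; the reduced-test-function machinery is what makes the argument go through.
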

\begin{rem}
  The paper \cite{im} contains a much more complete study of Ishii
  solutions in the one-dimensional setting.  Even if such a study most
  probably extends to the multi-dimensional setting, we focus here in
  the identification of the minimal and the maximal Ishii solutions. 
  Such a result is used in \cite{in}.
\end{rem}
The proof of Proposition~\ref{prop:fl-is} is very similar to the one 
in \cite{im} for the one-dimensional setting. We give details for
the reader's convenience. 

The proof relies on the following lemma, which is the analogue of
\cite[Lemma~2.18]{im}. Since the proof follows along the same lines,
we skip it.
\begin{lem}[``weak continuity'' condition with $C^1$ test
  functions] \label{lem:wc-c1} Given two Hamiltonians $H_L$, $H_R$
  satisfying \eqref{assum:H} and $H_0$ continuous and coercive
  (i.e. $\lim_{|P|\to +\infty} H_0(P)= +\infty$), let
  $u : (0,T) \times \R^{d+1} \to \R$ be upper semi-continuous such that
  every $C^1$ function $\phi$ touching $u$ from above at $(t,X)$ with
$X=(x',x_{d+1})$ and $t>0$,
  satisfies
\[\begin{cases} 
\phi_t + H_L (D\phi ) \le 0 & \text{ if }  x_{d+1} <0, \\
\phi_t  + H_R (D\phi ) \le 0 & \text{ if }  x_{d+1} >0, \\
\phi_t + H_0 (D\phi) \le 0 & \text{ if }  x_{d+1} =0.
\end{cases}\]
Then for all $t \in (0,T)$ and $X=(x',0)$, 
\[ u (t,X) = \limsup_{(s,Y) \to (t,X), y_{d+1}>0} u(s,Y) 
= \limsup_{(s,Y) \to (t,X), y_{d+1}<0} u(s,Y)\]
where $Y=(y',y_{d+1})$. 
\end{lem}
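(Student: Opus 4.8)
\textbf{Proof plan for Lemma~\ref{lem:wc-c1}.}
The statement is a multi-dimensional avatar of \cite[Lemma~2.18]{im}, and the plan is to reduce to that one-dimensional result by freezing the tangential variable $x'$ and using the coercivity to produce barriers. First I would fix $(t_0,X_0)$ with $X_0=(x_0',0)$ and argue by contradiction: suppose the $\limsup$ over $y_{d+1}>0$ (say) is strictly smaller than $u(t_0,X_0)$. Since $u$ is upper semi-continuous and only a one-sided defect is assumed, I would separate the two obvious inequalities ($\limsup$ from a side is $\le u(t_0,X_0)$ by upper semi-continuity) from the nontrivial one, so that only the reverse inequality ``$u(t_0,X_0) \le \limsup_{y_{d+1}>0}$'' has to be proved.

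The heart of the argument is a local perturbation/barrier construction. Near $(t_0,X_0)$ I would look at test functions of the separated form $\phi(t,x',x_{d+1}) = \psi(t,x') + \chi(x_{d+1})$, where $\psi$ is a smooth function of $(d+1)$ variables chosen (as in the standard ``doubling + penalization'' trick) so that $\psi$ touches $u$ from above in the tangential and time directions at $(t_0,x_0')$, and $\chi$ is a one-dimensional profile with a large downward kink crossing $x_{d+1}=0$ from the $x_{d+1}<0$ side. Coercivity of $H_R$ (used on the branch $x_{d+1}>0$) together with the monotonicity structure forces the maximum of $u - \phi$ over the region $x_{d+1}\ge 0$ to be attained arbitrarily close to $\{x_{d+1}=0\}$, and then the supersolution-type inequality $\phi_t + H_0(D\phi)\le 0$ at $x_{d+1}=0$, combined with the inequalities just above, yields information about the trace of $u$ from the $x_{d+1}>0$ side. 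Quantitatively, if $u(t_0,X_0)$ exceeded the one-sided $\limsup$, one could build $\chi$ with a slope large enough that no admissible contact point can sit strictly on the $x_{d+1}>0$ side nor exactly at $0$, contradicting that a maximum of the (coercive-in-$x_{d+1}$, locally bounded) function $u-\phi$ must exist somewhere on the closed half.

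The main obstacle, as usual in these ``weak continuity'' statements, is the bookkeeping at the interface: one must make sure the contact point of $u-\phi$ does not escape to the boundary of the local neighborhood (handled by adding the usual quadratic localization term $|x'-x_0'|^2 + (t-t_0)^2$ to $\psi$ and using upper semi-continuity and local boundedness of $u$), and one must verify that the kinked test function $\phi$ is genuinely admissible, i.e.\ $C^1$ on each side and continuous across $\{x_{d+1}=0\}$, which it is by construction. Because all of this is a line-by-line transcription of the argument in \cite{im} with the extra inert variable $x'$ carried along (the Hamiltonians now depend on $p'$ but this plays no role in the one-dimensional kink argument, only coercivity in the $x_{d+1}$-slope is used), I would simply indicate the reduction and refer to \cite{im} for the remaining details, exactly as the paper proposes to do.
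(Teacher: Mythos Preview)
Your plan is essentially the correct one, and it matches what the paper does (namely, nothing: it simply cites \cite[Lemma~2.18]{im} and omits the proof). Two small inaccuracies are worth cleaning up before you commit the sketch to paper. First, in this lemma the admissible test functions are \emph{globally} $C^1$ on $(0,T)\times\R^{d+1}$, not merely $C^1$ on each half-space with a matching trace; so the word ``kink'' is misleading. In fact no kink is needed: the one-dimensional profile can simply be $\chi(x_{d+1})=-R\,x_{d+1}$ with $R$ large, which is smooth and, thanks to the assumed strict drop of the one-sided $\limsup$, still produces a contact point exactly at $(t_0,X_0)$. Second, the coercivity that delivers the contradiction is that of $H_0$, not $H_R$: once the $C^1$ test function touches $u$ at a point with $x_{d+1}=0$, the viscosity inequality reads $\phi_t+H_0(p',-R)\le 0$, and letting $R\to+\infty$ contradicts $\lim_{|P|\to\infty}H_0(P)=+\infty$. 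With these two adjustments your sketch is a faithful transcription of the \cite{im} argument with the inert tangential variable $x'$ carried along.
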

\begin{proof}[Proof of Proposition~\ref{prop:fl-is}]
We have to prove the four following assertions:
\begin{enumerate}[i)]
\item \label{i} every $F_{A_I^\pm}$-flux-limited sub-solution  corresponds to a Ishii sub-solution; 
\item \label{ii} every $F_{A_I^\pm}$-flux-limited super-solution  corresponds to a Ishii super-solution; 
\item \label{iii} every Ishii sub-solution corresponds to an $F_{A_I^-}$-flux-limited sub-solution;
\item \label{iv} every Ishii super-solution corresponds to an  $F_{A_I^+}$-flux-limited super-solution.
\end{enumerate}
In order to prove these assertions, it is convenient to translate the
notion of $A$-flux-limited solution to the Euclidian setting. It
reduces to replace $F_A$ with $\check{F}_A$ where
\[ \check{F}_A (p',p_L,p_R) = \max (A (p'), H_L^+(p',p_L),H_R^-(p',p_R)) \]
where $H_L^+ (p',p_L) = H_1^-(p',-p_L)$ is the non-decreasing part of $p_L \mapsto H_L(p',p_L)$. 
In particular, $H_L^+(p',p_L) = H_L(p',p_L)$ if $p \ge \pi_L^0=p_L^+$. In the same way, $H_R^-(p',p_R) = H_R (p',p_R)$
if $p \le \pi_R^0=p_R^-$. 
\medskip

Let $\phi \in C^1 ((0,+\infty) \times \R^{d+1})$ be a test function
touching a $\check{F}_{A_I^\pm}$-flux-limited sub-solution from above at $X$. We have 
\[ \check{F}_A (p',p,p) \le \lambda \]
where $p' = D' \phi (X)$, $p = \partial_{d+1} \phi (X)$ and $\lambda = -\phi_t (X)$. This means
\[ \max (A_I^\pm (p'), H_L^+ (p',p), H_R^-(p',p)) \le \lambda.\]
If $p \le p_R^-$ or $p \ge p_L^+$, then $H_R^-(p',p)= H_R(p',p)$ or $H_L^+ (p',p) = H_L(p',p)$ and we get
\[ \min (H_R (p',p),  H_L(p',p)) \le \lambda. \]
If now $p_L^+ < p < p_R^-$, then $A_I^+(p')=A_I^-(p') \ge A^*(p')$ and 
\[ \min (H_R(p',p),H_L(p',p)) \le A^* (p') \le A_I^\pm \le \lambda \]
and we conclude in this case too. This achieves the proof of \ref{i}). 
\medskip

In order to prove \ref{ii}), we remark that 
\[ A_I^+ (p') \le \max (H_L (p',p), H_R(p',p)). \]
Let $\phi \in C^1 ((0,+\infty) \times \R^{d+1})$ be a test function
touching a $\check{F}_{A_I^\pm}$-flux-limited super-solution from below at $X$.
We have in this case
\[ \max (A_I^\pm (p'), H_L^+ (p',p), H_R^-(p',p)) \ge \lambda.\]
Since $A_I^- \le A_I^+$, we get immediately that 
\[ \max (H_L (p',p), H_R(p',p)) \ge \lambda.\]
This achieves the proof of \ref{ii}). 
\medskip

We next prove \ref{iii}). First, the weak continuity condition at
$x_{d+1}=0$ holds true thanks to Lemma~\ref{lem:wc-c1}.  Then we can
apply Proposition~\ref{pro::r29} and consider a test function
$\phi \in C((0,+\infty) \times \R^{d+1})$ such that
$\phi_{|\{\pm x_{d+1} \ge 0\}}$ are $C^1$ and
\begin{align*}
p_L= \partial_{d+1} \phi (x',0-) = \pi_L^- (p',A_I^- (p'))\\
p_R = \partial_{d+1} \phi (x',0+) = \pi_R^+ (p',A_I^- (p')).
\end{align*}
Assume that $\phi$ touches an Ishii sub-solution at a point $X$. Let
$p'= D'\phi (X)$. If $A_I^-(p')=A_0(p')$, then we can argue as in
\cite[Theorem~2.7,i)]{im} and get the desired result. We thus assume
that $A_I^- (p')= A_I^+ (p')= A^*(p') = H_L(p',p^*)=H_R(p',p^*)$ with
$p^* \in [\pi_R^0(p'),\pi_L^0 (p')]$. But in this case 
\[ p_L = p_R =p^* \]
and the test function 
\[ \phi (t,x',x) = \varphi (t,x') + p^* x_{d+1}\]
is $C^1$ in $(0,+\infty) \times \R^{d+1}$. 
In particular, since $u$ is an Ishii sub-solution, we get
\[ A_I^-(p')= \min (H_L (p',p^*),H_R(p',p^*)) \le \lambda\]
which yields the desired inequality (this can be checked easily).
This achieves the proof of \ref{iii}).  \medskip

We finally prove \ref{iv}). We use once again the reduced set of test
functions and consider $\phi$ of the form
\[ \phi (t,x',x_{d+1}) = \varphi (t,x') + \phi_0 (x_{d+1})\]
with 
\[ \phi_0'(0+) = \pi^+_R(p',A_I^+ (p')) \quad \text{ and } \quad
\phi_0'(0-) = \pi_L^-(p',A_I^+(p'))\]
where $p' = D' \varphi (t_0,x'_0)$ if $\phi$ touches the Ishii
super-solution $u$ from below at $(t_0,(x'_0,0))$. 

If $A_I^+ (p') = A^* (p') \ge A_0 (p')$, then we choose
\(\phi_0(x_{d+1}) = p^* x_{d+1} \)
with $p^*$ such that $A^*(p')=H_R(p',p^*)=H_L(p',p^*)$.
Since $u$ is an Ishii super-solution, we have
\[ \varphi_t (t_0,x_0') + \max (H_R (p', p^*), H_L (p',p^*)) \ge 0 \]
that is to say 
\[ \varphi_t (t_0,x_0') + A_I^+ (p') \ge 0 \]
which is the desired inequality. 

If now $A_I^+ (p') = A_0 (p') \ge A^* (p')$, then we choose 
\[ \phi_0(x_{d+1}) = \pi_R^+ (p',A_0(p')) x_{d+1} \un_{x_{d+1}\ge 0} + \pi_L^- (p',A_0(p')) x_{d+1} \un_{x_{d+1}\le 0}. \]
We notice that there exists $\alpha \in \{R,L\}$ such that 
$A_0 (p') = H_\alpha (p', \pi_\alpha^0(p'))$
and
\[ \pi_L^-(p',A_0(p')) \le \pi_R^+(p',A_0(p'))\]
and one of them equals $\pi_\alpha^0(p')$. 
These three facts imply that
\[ \tilde \phi (t,x',x_{d+1}) := \varphi (t,x') + \pi_\alpha^0(p')
x_{d+1} \le \phi (t,x,',x_{d+1}).\]
In particular $\tilde \phi$
is a $C^1$ test function touching $u$ from below at $(t_0,(x'_0,0))$. Since 
$u$ is an Ishii super-solution, we get in this case,
\[ \varphi_t (t_0,x_0') + \max (H_R (p',\pi_\alpha^0(p')), H_L (p',\pi_\alpha^0(p'))) \ge 0 \]
which implies 
\[ \varphi_t (t_0,x_0') + A_0 (p') \ge 0.\]
The proof is now complete.
\end{proof}

\paragraph{Aknowledgements. }
This work was partially supported by the ANR-12-BS01-0008-01
HJnet project.

\providecommand{\href}[2]{#2}
\providecommand{\arxiv}[1]{\href{http://arxiv.org/abs/#1}{arXiv:#1}}
\providecommand{\url}[1]{\texttt{#1}}
\providecommand{\urlprefix}{URL }

\end{document}